\documentclass[11pt]{article}
\usepackage[english]{babel}										\usepackage[margin = 1.2in]{geometry}					
\usepackage{amssymb,cite}	

\usepackage{amsmath,amsfonts,amsthm, mathtools, bm} 
\usepackage{mathrsfs}
\usepackage[pdftex]{graphicx}	
\usepackage[title,titletoc,page]{appendix} 
\usepackage{listings} 
\usepackage{color}
\usepackage{comment}
\usepackage{cases}
\usepackage[ruled,vlined]{algorithm2e}
\usepackage{sectsty}
\usepackage[prependcaption,textsize=tiny,textwidth=3cm]{todonotes}
\usepackage{cases}
\allsectionsfont{\centering \normalfont\scshape}

\usepackage{stmaryrd}
\usepackage{fancyhdr}
\usepackage{float}
\usepackage[cal=cm]{mathalfa}
\usepackage{mathrsfs}
\usepackage[usenames,dvipsnames]{xcolor}

\usepackage{todonotes}

\usepackage{tikz}
\usetikzlibrary{shapes.geometric, shapes.misc, positioning, arrows, decorations.pathreplacing, angles, quotes,calc}
\usepackage{booktabs}
\usepackage{xfrac}
\usepackage{siunitx}
\usepackage{centernot}
\usepackage{comment}
\usepackage{chngcntr}
\usepackage{extarrows}
\usepackage{adjustbox}
\usepackage{tikz-cd} 
\usetikzlibrary{decorations.pathreplacing}
\usepackage{subcaption}

\makeatletter
\def\thmhead@plain#1#2#3{%
  \thmname{#1}\thmnumber{\@ifnotempty{#1}{ }\@upn{#2}}%
  \thmnote{ {\the\thm@notefont#3}}}
\let\thmhead\thmhead@plain
\makeatother

\usepackage[hidelinks]{hyperref}
\hypersetup{colorlinks = true,
linkcolor = MidnightBlue, anchorcolor =red,
citecolor = ForestGreen,
filecolor = red,urlcolor = red,
            pdfauthor=author}
\usepackage{cleveref}

\usepackage{titlesec}
\titleformat{\subsection}{\normalfont\normalsize\bfseries}{\thesubsection}{1em}{}
\titleformat{\section}{\normalfont\large\bfseries}{\thesection}{1em}{}

\newtheorem{theorem}{Theorem}[section]
\newtheorem{lemma}[theorem]{Lemma}
\newtheorem{proposition}[theorem]{Proposition}
\newtheorem{corollary}[theorem]{Corollary}

\numberwithin{equation}{section}

\theoremstyle{remark}
\newtheorem{remark}[theorem]{Remark}

\theoremstyle{definition}
\newtheorem{definition}[theorem]{Definition}
\newtheorem*{example*}{Example}

\newcommand{\mc}[1]{\mathcal{#1}}

\newcommand{\w}[1]{\widetilde{#1}}
\newcommand{\h}[1]{\hat{#1}}

\def\q {\quad}

\def \l{\langle}
\def \r{\rangle}
\def \ti{\tilde}

\def\bb{\begin{equation}
  \left\{\ 
   \begin{aligned} }
\def\ee{   \end{aligned}
  \right.
  \end{equation}}

\def\mm{ \left[
 \begin{matrix}}
\def\nn{\end{matrix} \right] } 

\def\p{\partial}
\def \dd{\mathrm{d}}

\def \w {\widetilde}
\def \h{\hat}

\def\d{\delta}

\def\s{\sigma}

\def \diag{\operatorname{diag}}

\def \sdiv {\sdiv_S}

\def \R{\mathbb{R}}

\def \C{\mathbb{C}}

\def \Z{\mathbb{Z}}

\def \e{\mathrm{e}}
\def \i{\mathrm{i}}

\def \ww{\omega}

\def \x{{\rm{\bf x}}}

\def\l{\left}
\def\r{\right}
\def\lf{\lfloor}
\def\rf{\rfloor}
\def\lc{\lceil}
\def\rc{\rceil}
\def\lb{\llbracket}
\def\rb{\rrbracket}

\def\al{\bm\alpha}
\def\be{\bm\beta}

\everymath{\displaystyle}

\begin{document}

\title{Frequency-dependent capacitance matrix formulation for Fabry-Pérot resonances. Part I: One-dimensional finite systems}


\author{Habib Ammari\footnote{
  ETH Z\"urich, Department of Mathematics, Rämistrasse 101, 8092 Z\"urich, Switzerland, (habib.ammari@math.ethz.ch).} \and Bowen Li\footnote{Department of Mathematics,  City University of Hong Kong, Kowloon Tong, Hong Kong SAR, (bowen.li@cityu.edu.hk).}
\and Ping Liu\footnote{
School of Mathematical Sciences, Zhejiang University, Hangzhou, 310027, China, (pingliu@zju.edu.cn, shaoyingjie323@zju.edu.cn).}  \thanks{
Institute of
Fundamental and Transdisciplinary Research, Zhejiang University, Hangzhou, 310027, China.}
\and Yingjie Shao\footnotemark[3]
}

\date{}
\maketitle
\begin{abstract}
We study scattering resonances of finite one-dimensional systems of high-contrast 
resonators beyond the subwavelength regime. Introducing a novel tridiagonal 
frequency-dependent capacitance matrix, we derive quantitative asymptotic 
expansions of the hybridized Fabry-P\'erot resonant frequencies in terms of the 
material contrast parameter. The leading-order shifts are governed by 
the eigenvalues of this matrix, while the corresponding eigenmodes are 
approximated, to leading order, by trigonometric functions on selected spacings 
between resonators. Our results extend the use of discrete approximations as a powerful tool for characterizing the resonant properties of a system of high-contrast resonators at arbitrarily
high frequencies.
\end{abstract} 

    \noindent
    \textbf{Keywords:} Fabry-Pérot scattering resonance, frequency-dependent capacitance matrix, high-contrast resonator, discrete approximation\\
    
    \noindent \textbf{AMS Subject classifications:} 35B34, 35P25, 35C20, 15A18\\

\section{Introduction}

The study of scattering resonances of high-contrast resonator systems is fundamental 
in the design of metamaterials. In the subwavelength regime, it was shown 
in~\cite{ammari.davies.ea2024Functional,cbms,feppon.cheng.ea2023Subwavelength} 
that a \emph{capacitance matrix formulation} based on first-principles analysis 
provides a natural starting point for both theoretical analysis and numerical 
simulation of wave localization and manipulation. This formulation yields a discrete 
approximation to the low-frequency part of the spectrum of the continuous Helmholtz 
model, valid in the high-contrast asymptotic limit, and applies to a wide variety 
of settings, including finite and infinite systems and Hermitian and 
non-Hermitian models, enabling the description of a rich array of exotic 
subwavelength phenomena; 
see~\cite{barandun2023,ammariMathematicalFoundationsNonHermitian2024,%
ammari.barandun.ea2025Subwavelength,ammari2020exceptional}.

In~\cite{pm1}, using a propagation matrix approach, the scattering resonances of a finite system of resonators beyond the subwavelength regime are characterized as zeros of an 
explicit trigonometric polynomial. Nevertheless, their asymptotic expansions in terms of the contrast parameter are derived using this characterization only in the subwavelength regime.


In~\cite{li2025high}, for a single resonator in three dimensions, the existence of a family of infinite resonances near the real axis is established, and first-order asymptotic expansions of these resonances in terms of the material contrast parameter are obtained. The study of such 
high-frequency modes is motivated by the significant potential of high-contrast 
resonator systems to realize high-frequency and widely tunable 
resonant devices~\cite{HF1,HF2}. 

In this paper, we consider a finite system of resonators in which the scattering resonant frequencies hybridize due to inter-resonator interactions. We derive a frequency-dependent capacitance matrix formulation that provides the first-order asymptotic expansions of the hybridized scattering resonances with respect to the contrast parameter, valid beyond the subwavelength regime. The capacitance matrix introduced here is tridiagonal, reflecting the nearest-neighbor interaction structure 
of one-dimensional systems, and depends on the frequency, as it is designed to capture resonances away from zero. In particular, at zero frequency, it reduces to the classical capacitance matrix of~\cite{feppon.cheng.ea2023Subwavelength}.


The present paper is the first in a series devoted to scattering resonances of high-contrast resonator systems beyond the subwavelength regime. In Part~II, we consider periodic infinite structures, combining the approach developed here with those of~\cite{junshan1,barandun2023} to generalize our results and study high-frequency wave localization. In Part~III, we generalize the frequency-dependent capacitance matrix 
formulation to three-dimensional systems with long-range inter-resonator interactions, extending the results of~\cite{li2025high} to systems of resonators 
and enabling efficient computation of their high-frequency hybridized resonances. Part~IV treats nonlinear high-contrast resonator systems and elucidates the effect 
of nonlinearities on their spectral properties. Part~V addresses time-modulated systems, 
extending the scattering and transmission results 
of~\cite{liora1,liora2,liora3} to show, in particular, that nonreciprocal wave propagation is achievable in space-time modulated media beyond the subwavelength regime. 

The remainder of this paper is organized as follows. In~\Cref{sec:2}, we introduce the model problem and state our main results. In~\Cref{sec:3}, we study the spectral properties of the frequency-dependent capacitance matrix, which is of independent interest. \Cref{sec:4} characterizes the resonant frequencies as zeros of an analytic function via the total transfer matrix. Building on this characterization, \Cref{sec:5} provides a 
rigorous derivation of the asymptotic expansions of the resonant frequencies with respect to the contrast parameter. Finally, \Cref{sec:6} derives approximations of the eigenmodes beyond the subwavelength regime, showing that they can be approximated by trigonometric functions whose amplitudes are determined by the eigenvectors of a certain matrix.

\section{Model setting and 
Main results} \label{sec:2}

In this section, we introduce the mathematical model and summarize the main results of this work. We first formulate the one-dimensional system of high-contrast resonators and the governing wave equation in Section \ref{subsec:model}. Section \ref{subsec:frequ_capmat} defines the frequency-dependent capacitance matrix, which acts as a fundamental analytic tool to characterize hybridized scattering resonances beyond the subwavelength regime. Lastly, Section \ref{subsec:mainresult} details our main results, providing the asymptotic expansions for both the resonant frequencies and their corresponding eigenmodes in the high-contrast limit.

\subsection{Model setting} \label{subsec:model}
We consider a one-dimensional chain of $N$ identical disjoint resonators $D_i\coloneqq (x_{2i-1},x_{2i})$, where $(x_i)_{1\leq i\leq 2N} \subset \R$ are the $2N$ boundaries satisfying $x_1<x_2<\cdots<x_{2N}$. We also denote by  $\ell_i = x_{2i} - x_{2i-1}$ the length of each of the resonators and by $s_i= x_{2i+1} -x_{2i}$ the spacing between the $i$\textsuperscript{th}  and $(i+1)$\textsuperscript{th}  resonators. The system is illustrated in \Cref{fig:setting}. 

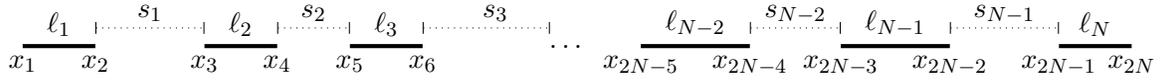
\begin{figure}[!htb]
    \centering
    \begin{adjustbox}{width=\textwidth}
    \begin{tikzpicture}
        \coordinate (x1l) at (1,0);
        \path (x1l) +(1,0) coordinate (x1r);
        \path (x1r) +(0.75,0.7) coordinate (s1);
        \path (x1r) +(1.5,0) coordinate (x2l);
        \path (x2l) +(1,0) coordinate (x2r);
        \path (x2r) +(0.5,0.7) coordinate (s2);
        \path (x2r) +(1,0) coordinate (x3l);
        \path (x3l) +(1,0) coordinate (x3r);
        \path (x3r) +(1,0.7) coordinate (s3);
        \path (x3r) +(2,0) coordinate (dots);
        \path (dots) +(1,0) coordinate (x6l);
        \path (x6l) +(1.5,0) coordinate (x6r);
        \path (x6r) +(1.25,0) coordinate (x7l);
        \path (x6r) +(0.625,0.7) coordinate (s6);
        \path (x7l) +(1.5,0) coordinate (x7r);
        \path (x7r) +(1.5,0) coordinate (x8l);
        \path (x7r) +(0.75,0.7) coordinate (s7);
        \path (x8l) +(1,0) coordinate (x8r);
        \draw[ultra thick] (x1l) -- (x1r);
        \node[anchor=north] at (x1l) {$x_1$};
        \node[anchor=north] at (x1r) {$x_2$};
        \node[anchor=south] at ($(x1l)!0.5!(x1r)$) {$\ell_1$};
        \draw[dotted,|-|] ($(x1r)+(0,0.25)$) -- ($(x2l)+(0,0.25)$);
        \draw[ultra thick] (x2l) -- (x2r);
        \node[anchor=north] at (x2l) {$x_3$};
        \node[anchor=north] at (x2r) {$x_4$};
        \node[anchor=south] at ($(x2l)!0.5!(x2r)$) {$\ell_2$};
        \draw[dotted,|-|] ($(x2r)+(0,0.25)$) -- ($(x3l)+(0,0.25)$);
        \draw[ultra thick] (x3l) -- (x3r);
        \node[anchor=north] at (x3l) {$x_5$};
        \node[anchor=north] at (x3r) {$x_6$};
        \node[anchor=south] at ($(x3l)!0.5!(x3r)$) {$\ell_3$};
        \draw[dotted,|-|] ($(x3r)+(0,0.25)$) -- ($(dots)+(-.25,0.25)$);
        \node at (dots) {\dots};
        \draw[ultra thick] (x6l) -- (x6r);
        \node[anchor=north] at (x6l) {$x_{2N-5}$};
        \node[anchor=north] at (x6r) {$x_{2N-4}$};
        \node[anchor=south] at ($(x6l)!0.5!(x6r)$) {$\ell_{N-2}$};
        \draw[dotted,|-|] ($(x6r)+(0,0.25)$) -- ($(x7l)+(0,0.25)$);
        \draw[ultra thick] (x7l) -- (x7r);
        \node[anchor=north] at (x7l) {$x_{2N-3}$};
        \node[anchor=north] at (x7r) {$x_{2N-2}$};
        \node[anchor=south] at ($(x7l)!0.5!(x7r)$) {$\ell_{N-1}$};
        \draw[dotted,|-|] ($(x7r)+(0,0.25)$) -- ($(x8l)+(0,0.25)$);
        \draw[ultra thick] (x8l) -- (x8r);
        \node[anchor=north] at (x8l) {$x_{2N-1}$};
        \node[anchor=north] at (x8r) {$x_{2N}$};
        \node[anchor=south] at ($(x8l)!0.5!(x8r)$) {$\ell_N$};
        \node[anchor=north] at (s1) {$s_1$};
        \node[anchor=north] at (s2) {$s_2$};
        \node[anchor=north] at (s3) {$s_3$};
        \node[anchor=north] at (s6) {$s_{N-2}$};
        \node[anchor=north] at (s7) {$s_{N-1}$};
    \end{tikzpicture}
    \end{adjustbox}
    \caption{A chain of $N$ resonators, with lengths
    $(\ell_i)_{1\leq i\leq N}$ and spacings $(s_{i})_{1\leq i\leq N-1}$.}
    \label{fig:setting}
\end{figure}
We denote by
\begin{align*}
   D\coloneqq \bigcup_{i=1}^N(x_{2i-1},x_{2i})
\end{align*}
the set of resonators. 
In this work, we consider the one-dimensional wave equation propagating in a heterogeneous medium with space-dependent material parameters:
\begin{align}
    \frac{\omega^{2}}{\kappa(x)}u(x) +\frac{\dd}{\dd x}\left( \frac{1}{\rho(x)}\frac{\dd}{\dd
    x}  u(x)\right) =0,\qquad x \in\R.
    \label{eq: gen Strum-Liouville}
\end{align}
We assume that the material parameters $\kappa(x)$ and $\rho(x)$ are piecewise constant in the interior and exterior of the resonators
\begin{align*}
    \kappa(x)=
    \begin{dcases}
        \kappa_b, & x\in D,\\
        \kappa, &  x\in\R\setminus D,
    \end{dcases}\quad\text{and}\quad
    \rho(x)=
    \begin{dcases}
        \rho_b, & x\in D,\\
        \rho, &  x\in\R\setminus D,
    \end{dcases}
\end{align*}
where the constants $\rho_b, \rho, \kappa, \kappa_b \in \R_{>0}$. We denote the wave speeds inside the set $D$ of resonators and inside the background medium $\R\setminus D$  by $v_b$ and $v$, the wave numbers respectively by $k_b$ and $k$, the contrast between the densities of the resonators and the background medium by $\delta$, and the ratio between wave speeds $v$ and $v_b$ by $r$
\begin{align} \label{eq:contrast}
    v_b:=\sqrt{\frac{\kappa_b}{\rho_b}}, \qquad v:=\sqrt{\frac{\kappa}{\rho}},\qquad
    k_b:=\frac{\omega}{v_b},\qquad k:=\frac{\omega}{v},\qquad
    \delta:=\frac{\rho_b}{\rho},\qquad r:=\frac{v}{v_b}.
\end{align}
\begin{remark}
It should be noted that while $\delta>0$ in (\ref{eq:contrast}) has a physical meaning, we will always extend it to the complex domain $\delta\in\C$. Moreover, we will always consider $r$ as a positive real parameter.
\end{remark}
For these step-wise defined material parameters, the wave problem determined by \eqref{eq: gen Strum-Liouville} reduces to the following system of coupled one-dimensional Helmholtz equations:
\begin{align}
    \label{equ: scattering problem}
    \begin{dcases}
        \frac{\dd{^2}}{\dd x^2}u(x)+ \frac{\omega^2}{v^2}u(x) = 0, & x\in \R \setminus D ,\\
        \frac{\dd{^2}}{\dd x^2}u(x)+ \frac{\omega^2}{v_b^2}u(x) = 0, & x\in D,\\
        u\vert_{+}(x_{{j}}) = u\vert_{-}(x_{{j}}) , &  1\leq j\leq 2N ,\\
        \left.\frac{\dd u}{\dd x}\right\vert_{+}(x_{{2j-1}}) = \delta\left.\frac{\dd u}{\dd x}\right\vert_{-}(x_{{2j-1}}), & 1\leq j\leq N ,\\
        \left.\frac{\dd u}{\dd x}\right\vert_{-}(x_{{2j}})=\delta\left.\frac{\dd u}{\dd x}\right\vert_{+}(x_{{2j}})   , & 1\leq j\leq N,\\
        \big(\frac{\dd}{\dd |x|} - \mathrm{i} k \big) u = 0 & \text{for } x \in (-\infty, x_{1}) \cup (x_{2N}, +\infty),  \\
    \end{dcases}
\end{align}
where for a function $w$ we denote by
\begin{align*}
    w\vert_{-}(x) \coloneqq \lim_{\substack{s\to 0\\ s>0}}w(x-s) \quad \mbox{and} \quad  w\vert_{+}(x) \coloneqq \lim_{\substack{s\to 0\\ s>0}}w(x+s),
\end{align*}
if the limits exist. We call $\omega$ a \textit{resonant frequency} (or \textit{resonance}) if (\ref{equ: scattering problem}) admits a non-trivial solution $u(x)$ and we call such a solution $u(x)$ an eigenmode. 

\subsection{Frequency-dependent capacitance matrix and its structure} \label{subsec:frequ_capmat}

In this section, we introduce the frequency-dependent capacitance matrix and elucidate its structural property. We let the vector $\bm t = (t_1, \dots, t_{2N-1})^\top$ be defined by 
\begin{equation} \label{equ: bm t def}
    \bm t:=(r\ell_1,\ s_{1},\ r\ell_{2},\ s_{2},\ \cdots,\ r\ell_{N-1},\ s_{N-1},\ r\ell_N)^\top\in\R^{2N-1}_{>0}.
    \end{equation}
We define a wave-number-dependent parameter 
\begin{align}\label{equ: t_j(k) def}
t_j(k) = 
\begin{cases}
    t_j, & \text{if } \pi \mid t_j k, \\
    \infty, & \text{otherwise},
\end{cases}
\end{align}
with the convention that $1/\infty = 0$. Based on this definition, we introduce the coupling coefficients
\begin{equation} \label{eq:coupling}
     \theta_j = \frac{1}{t_j t_{j+1}}, \quad \theta_j(k) = \frac{1}{t_j(k) t_{j+1}(k)}, \quad k \in \mathbb{R},\ 1 \leq j \leq 2N-2. 
\end{equation}
The frequency-dependent capacitance matrix is defined as the following tridiagonal matrix:
{\small
\begin{align}\label{eq:mathcalCk_def}
\mathcal{C}(k) :=
\begin{pmatrix}
    \theta_1(k) & -\theta_1(k) & & & \\
    -\theta_2(k) & \theta_2(k) + \theta_3(k) & -\theta_3(k) & & \\
    & -\theta_4(k) & \theta_4(k) + \theta_5(k) & -\theta_5(k) & \\
    & & \ddots & \ddots & \ddots \\
    & & & -\theta_{2N-4}(k) & \theta_{2N-4}(k) + \theta_{2N-3}(k) & -\theta_{2N-3}(k) \\
    & & & & -\theta_{2N-2}(k) & \theta_{2N-2}(k)
\end{pmatrix}.    
\end{align}
}

To further investigate the structure of $\mathcal{C}(k)$, we introduce several more notations.  Our subsequent analysis focuses on a fixed
\begin{equation} \label{def:E}
    k_0 \in E := \bigcup_{j=1}^{2N-1} \frac{\pi}{t_j} \mathbb{Z}.
\end{equation}
since $\mathcal{C}(k_0)$ is not a zero matrix only if $k_0\in E$. For $k_0\in E$, we define the set of resonant indices
\begin{equation} \label{def:Ik0}
    I = I(k_0) := \bigl\{ j : t_j k_0 \in \pi\mathbb{Z},\ 1 \leq j \leq 2N-1 \bigr\},
    \qquad n = n(k_0) := \# I.
\end{equation}
So $j \in I$ if and only if there exists $m_j \in \mathbb{Z}$ such that 
\begin{equation}\label{equ:defmj}
t_j k_0 = m_j \pi.
\end{equation}
For $a, b \in \mathbb{Z}$, we denote by 
$\llbracket a, b \rrbracket := [a, b] \cap \mathbb{Z}$ the corresponding integer 
interval. We partition $I$ into $p$ maximal disjoint integer intervals
\begin{equation}\label{equ:partition}
    I = \bigcup_{j=1}^p \mathcal{I}_j, \qquad \mathcal{I}_j = \llbracket a_j, b_j \rrbracket,
\end{equation}
where $b_j \prec a_{j+1}$ for $j = 1, \ldots, p-1$, which means $a_{j+1} - b_j > 1$.
We write, for $1 \leq j \leq p$, 
\begin{equation} \label{def:n_j}
 n_j := \#\mathcal{I}_j = b_j - a_j + 1, 
\end{equation}
so that
$n = \sum_{j=1}^p n_j$.

\begin{example*}
To illustrate (\ref{def:E})–(\ref{def:n_j}), we consider a concrete configuration with $N=8$ (so $2N-1=15$ segments) and take
\[
\bm t=(1,\,2,\,1.5,\,2.5,\,2,\,2,\,3,\,1,\,0.5,\,2,\,1,\,1.5,\,1,\,1,\,1)^{\top},\qquad k_0=\pi.
\]
Since $k_0=\pi$, an interval index $j$ belongs to the resonant set $I$ precisely when $t_j$ is an integer. Hence
\[
I = \{1,2,5,6,7,8,10,11,13,14,15\},\qquad n = \#I = 11.
\]
Figure \ref{fig:example_intervals} illustrates the partition (\ref{equ:partition}) of $I$.

\begin{figure}[!htb]
\centering
\begin{tikzpicture}[scale=0.6, every node/.style={font=\small}]
\def\N{15}
\def\resonant{1,2,5,6,7,8,10,11,13,14,15}
\foreach \i in {1,...,\N} { \fill[blue!30] (\i,0) rectangle (\i+1,1); }
\foreach \i in \resonant { \fill[red!30] (\i,0) rectangle (\i+1,1); }
\foreach \i in {1,...,\N} {
\draw[gray!50] (\i,0) rectangle (\i+1,1);
\node at (\i+0.5,0.5) {\i};
}
\draw[decorate,decoration={brace,amplitude=5pt,mirror}] (1,0) -- (3,0) node[midway,below=6pt] {$\mathcal{I}_1$ ($n_1=2$)};
\draw[decorate,decoration={brace,amplitude=5pt,mirror}] (5,0) -- (9,0) node[midway,below=6pt] {$\mathcal{I}_2$ ($n_2=4$)};
\draw[decorate,decoration={brace,amplitude=5pt,mirror}] (10,0) -- (12,0) node[midway,below=6pt] {$\mathcal{I}_3$ ($n_3=2$)};
\draw[decorate,decoration={brace,amplitude=5pt,mirror}] (13,0) -- (16,0) node[midway,below=6pt] {$\mathcal{I}_4$ ($n_4=3$)};
\end{tikzpicture}
\caption{Resonant intervals (red) and non‑resonant intervals (blue) for the example.}
\label{fig:example_intervals}
\end{figure}
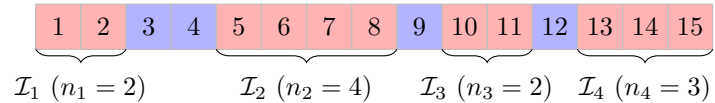
\end{example*}

Note that while the frequency-dependent capacitance matrix $\mathcal{C}(k_0)$ defined 
in~\eqref{eq:mathcalCk_def} does not itself have an exact block-diagonal structure, 
its symmetrized version $\mathcal{C}^{\mathrm{sym}}(k_0)$ does (see 
Figures~\ref{fig:interval_matrix} and~\ref{fig:compare}). This structural property 
underlies the following spectral result, whose proof is given in 
Section~\ref{sec: generalized capcitance matrix}. 

\begin{theorem}\label{thm: C_j's}
Let $C_j$, $1 \leq j \leq p$, be the principal submatrices of $\mathcal{C}(k_0)$ 
defined in Section~\ref{sec: generalized capcitance matrix}. Then the nonzero 
eigenvalues of $\mathcal{C}(k_0)$ coincide with the union of the nonzero 
eigenvalues of $C_1, \ldots, C_p$, counted with multiplicity. 
\end{theorem}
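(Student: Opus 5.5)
Since the $C_j$ are only defined later, I take them to be the principal submatrices of $\mathcal{C}(k_0)$ that carry the nonzero couplings generated by $\mathcal{I}_j$, and I will check this reading against Section~\ref{sec: generalized capcitance matrix}.

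\textbf{Structure of $\mathcal{C}(k_0)$.} Index the unknowns by resonators $1,\dots,N$. Row $i$ of $\mathcal{C}(k_0)$ involves only $\theta_{2i-2}(k_0)$ and $\theta_{2i-1}(k_0)$. The coefficient $\theta_j(k_0)$ is nonzero if and only if both $j$ and $j+1$ lie in $I$, i.e. both lie in the same block $\mathcal{I}_m$. Hence the nonzero couplings fall into disjoint groups of consecutive $\theta$'s, one group per block $\mathcal{I}_m$ with $n_m\ge 2$. Each group touches a set $J_m$ of consecutive resonator indices. Blocks are separated by at least one index not in $I$, so the sets $J_m$ are pairwise disjoint.

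Two degenerate cases need care. A block with $n_m=1$ contributes no coupling at all. Resonators adjacent to two different blocks could share a row, but then the two entries sit in different columns. I expect $C_m$ to be the principal submatrix of $\mathcal{C}(k_0)$ on $J_m$.

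\textbf{Symmetrization.} Write $\mathcal{C}(k_0)=A$. On each group the matrix has the form
\[
A=\operatorname{diag}(d)\,L,\qquad L \text{ a weighted path Laplacian}.
\]
This holds because $\theta_{2i-2}=1/(t_{2i-2}t_{2i-1})$ and $\theta_{2i-1}=1/(t_{2i-1}t_{2i})$ share the factor $1/t_{2i-1}$ with $t_{2i-1}=r\ell_i$. So row $i$ equals $(1/t_{2i-1})$ times a symmetric Laplacian row with weights $1/t_{2i}$ on the even spacings.

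In general, rows with both couplings zero are zero. Rows on the boundary of a group have only one nonzero coupling, which is exactly the form $\operatorname{diag}(d)L$ restricted to the group. Thus $A=\operatorname{diag}(d)\,S$ with $d_i>0$ where relevant, and $S$ symmetric and block diagonal with blocks $S_m$ on the $J_m$ plus zero elsewhere. Rows where $d_i$ is not naturally defined are identically zero, so $d_i=1$ can be set there.

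Then $\operatorname{diag}(d)^{1/2} S\,\operatorname{diag}(d)^{1/2}=\mathcal{C}^{\mathrm{sym}}(k_0)$ is similar to $A$. It is block diagonal with blocks similar to $C_m=\operatorname{diag}(d|_{J_m})S_m$, plus zero blocks.

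\textbf{Conclusion.} Similarity preserves eigenvalues with algebraic multiplicity. The spectrum of a block diagonal matrix is the union of the block spectra. Discarding zeros, the nonzero eigenvalues of $\mathcal{C}(k_0)$ are therefore the union of the nonzero eigenvalues of the $C_m$. Blocks with $n_m\le 1$ give zero matrices and contribute nothing.

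As a check avoiding the symmetrization, a permutation argument works directly. Order the indices so that each $J_m$ is contiguous and the remaining indices come last. The couplings never cross between groups, so $A$ becomes block upper-triangular with diagonal blocks $C_m$ and a zero diagonal block, and the characteristic polynomial factorizes.

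\textbf{Main obstacle.} The bookkeeping between $\theta$-indices and resonator indices is the delicate part. Coupling indices $j$ are odd on resonator segments $t_{2i-1}=r\ell_i$ and even on spacings, and the resonator set $J_m$ depends on the parities of $a_m$ and $b_m$. I also need to confirm that no row mixes entries from two groups in a way that breaks the block structure, in particular when $b_m$ and $a_{m+1}$ differ by exactly $2$. I would handle this through the parity cases of $(a_m,b_m)$.
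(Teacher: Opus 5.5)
Your symmetrization route has a false step, and your fallback argument leaves its decisive bookkeeping undone.

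\textbf{The symmetrization step fails.} You claim $\mathcal{C}(k_0)=\operatorname{diag}(d)\,S$ with $\operatorname{diag}(d)$ invertible (putting $d_i=1$ on zero rows) and $S$ symmetric. That would make $\mathcal{C}(k_0)$ diagonally similar to $\mathcal{C}^{\mathrm{sym}}(k_0)$. A diagonal similarity preserves the zero pattern, but $\mathcal{C}(k_0)$ has an asymmetric zero pattern whenever a block $\mathcal{I}_m$ with $n_m\ge 2$ has an even endpoint. For $a_m$ even and $i=a_m/2+1$, we have $\mathcal{C}(k_0)_{i,i-1}=-\theta_{a_m}(k_0)\neq 0$. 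At the same time $\mathcal{C}(k_0)_{i-1,i}=-\theta_{a_m-1}(k_0)=0$ and row $i-1$ vanishes. The same happens at $(\mathrm{End},\mathrm{End}+1)$ when $b_m$ is even. These are the gray squares of Figure~\ref{fig:interval_matrix}. With $d_{i-1}=1$, your $S$ has $S_{i-1,i}=0\neq S_{i,i-1}$.

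Your remark about shared rows is also reversed. A row cannot carry couplings from two blocks, since that would put $2i-2,2i-1,2i$ in one block. A column can, however. For $N=3$ and $I=\{1,2,4,5\}$ (e.g.\ $\bm t=(1,1,0.5,1,1)$, $k_0=\pi$),
\[
\mathcal{C}(k_0)=\begin{pmatrix}\theta_1&-\theta_1&0\\0&0&0\\0&-\theta_4&\theta_4\end{pmatrix}.
\]
If $J_m$ means the indices touched by the $m$-th group, then $J_1=\{1,2\}$ and $J_2=\{2,3\}$ overlap, and no diagonal similarity symmetrizes this matrix.

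This is exactly why the paper uses Lemma~\ref{thm: tridiag eig} rather than a similarity. Equal products of opposite off-diagonal entries give equal characteristic polynomials. The paper applies this to $\mathcal{C}(k_0)$ versus $\mathcal{C}^{\mathrm{sym}}(k_0)$ and to each $C_j$ versus $C_j^{\mathrm{sym}}$, and then checks via $\mathrm{Sta}/\mathrm{End}$ that $\mathcal{C}^{\mathrm{sym}}(k_0)$ is block diagonal. Your factorization can be repaired the same way. Take $d_i=1/t_{2i-1}(k_0)\ge 0$, allowing zeros, so that $S$ really is the symmetric weighted path Laplacian. Then $\mathcal{C}(k_0)=XY$ and $\mathcal{C}^{\mathrm{sym}}(k_0)=YX$ with $X=\operatorname{diag}(d)^{1/2}$ and $Y=\operatorname{diag}(d)^{1/2}S$. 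The identity $\det(\lambda-XY)=\det(\lambda-YX)$ then replaces the similarity.

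\textbf{The permutation argument works once the bookkeeping is done.} This fallback is more direct than the paper's proof, but the bookkeeping you postponed is its whole content, and it needs the right $J_m$. Take $J_m:=\{i:\,2i-1\in\mathcal{I}_m\}$, which equals $\llbracket\mathrm{Sta}(\mathcal{I}_m),\mathrm{End}(\mathcal{I}_m)\rrbracket$, the index set of the paper's $C_m$. Let $Z:=\{i:\,2i-1\notin I\}$. Three facts follow.
\begin{enumerate}
\item The $J_m$ are pairwise disjoint because the $\mathcal{I}_m$ are.
\item Every entry of row $i$ carries the factor $1/t_{2i-1}(k_0)$, so rows in $Z$ vanish.
\item Suppose $i\in J_m$ and $\mathcal{C}(k_0)_{i,i-1}=-\theta_{2i-2}(k_0)\neq 0$ with $i-1\notin J_m$. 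Then $2i-2\in\mathcal{I}_m$ but $2i-3\notin\mathcal{I}_m$, so $2i-3\notin I$ by maximality, i.e.\ $i-1\in Z$. The same holds for column $i+1$.
\end{enumerate}
Ordering the indices as $J_1,\dots,J_p,Z$ therefore gives
\[
\begin{pmatrix}\operatorname{diag}(C_1,\dots,C_p)&W\\0&0\end{pmatrix},\qquad
\det\bigl(\lambda-\mathcal{C}(k_0)\bigr)=\lambda^{\#Z}\prod_{m=1}^{p}\det(\lambda-C_m).
\]
This proves the theorem without any symmetrization; the gray-square entries simply sit in $W$. Compared with the paper, this route avoids Lemma~\ref{thm: tridiag eig} and the separate check that all off-block entries of $\mathcal{C}^{\mathrm{sym}}(k_0)$ vanish. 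The paper's route has the side benefit of realizing the spectrum as that of a real symmetric block-diagonal matrix.
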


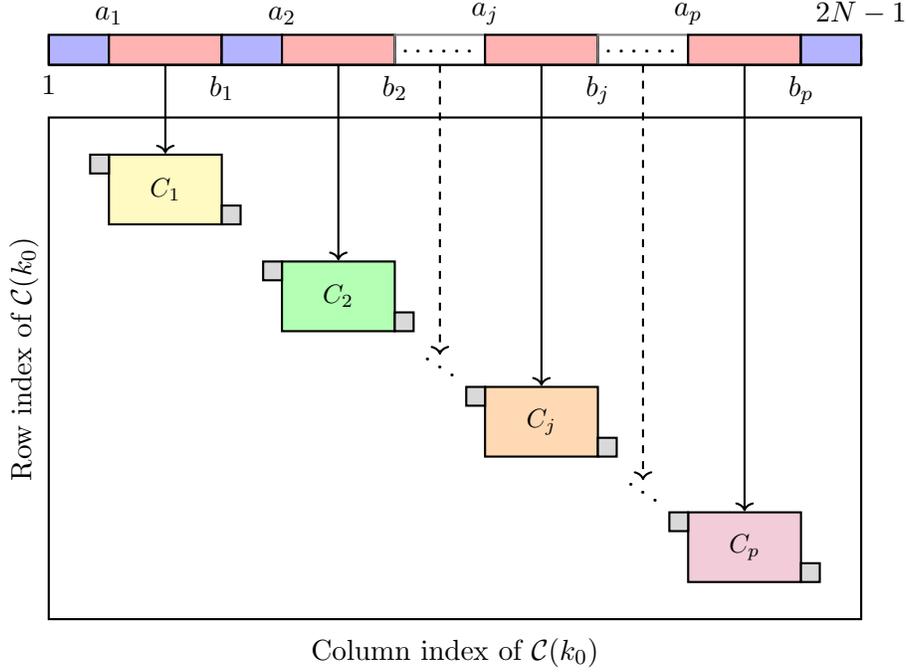
\begin{figure}[!htb]
\centering
\begin{tikzpicture}[
    thick,
    block/.style={draw, thick, align=center, font=\small},
    arrow/.style={thick, ->},
    ddot/.style={draw=none, font=\large}
]

\def\N{5}                         
\def\leftEnd{1}
\def\rightEnd{2N-1}               

\coordinate (start) at (0,0);
\coordinate (a1)   at (0.8,0);    
\coordinate (b1)   at (2.3,0);    
\coordinate (a2)   at (3.1,0);    
\coordinate (b2)   at (4.6,0);    
\coordinate (aj)   at (5.8,0);    
\coordinate (bj)   at (7.3,0);    
\coordinate (ap)   at (8.5,0);    
\coordinate (bp)   at (10,0);     
\coordinate (end)  at (10.8,0);   

\def\barHeight{0.4}

\newcounter{pt}
\setcounter{pt}{1}

\fill[blue!30] (start) rectangle (a1 |- {0,\barHeight});
\draw (start) rectangle (a1 |- {0,\barHeight});   
\draw (start) -- (start |- 0,\barHeight);
\draw (a1) -- (a1 |- 0,\barHeight);
\ifodd\value{pt}
    \node[below] at (start) {$\leftEnd$};
\else
    \node[above] at (start |- 0,\barHeight) {$\leftEnd$};
\fi
\stepcounter{pt}
\draw[dashed] (a1) -- (a1 |- 0,\barHeight);
\ifodd\value{pt}
    \node[below] at (a1) {$a_1$};
\else
    \node[above] at (a1 |- 0,\barHeight) {$a_1$};
\fi

\fill[red!30] (a1) rectangle (b1 |- {0,\barHeight});
\draw (a1) rectangle (b1 |- {0,\barHeight});   
\draw (b1) -- (b1 |- 0,\barHeight);
\stepcounter{pt}
\draw[dashed] (b1) -- (b1 |- 0,\barHeight);
\ifodd\value{pt}
    \node[below] at (b1) {$b_1$};
\else
    \node[above] at (b1 |- 0,\barHeight) {$b_1$};
\fi

\fill[blue!30] (b1) rectangle (a2 |- {0,\barHeight});
\draw (b1) rectangle (a2 |- {0,\barHeight});   
\draw (a2) -- (a2 |- 0,\barHeight);
\stepcounter{pt}
\draw[dashed] (a2) -- (a2 |- 0,\barHeight);
\ifodd\value{pt}
    \node[below] at (a2) {$a_2$};
\else
    \node[above] at (a2 |- 0,\barHeight) {$a_2$};
\fi

\fill[red!30] (a2) rectangle (b2 |- {0,\barHeight});
\draw (a2) rectangle (b2 |- {0,\barHeight});   
\draw (b2) -- (b2 |- 0,\barHeight);
\stepcounter{pt}
\draw[dashed] (b2) -- (b2 |- 0,\barHeight);
\ifodd\value{pt}
    \node[below] at (b2) {$b_2$};
\else
    \node[above] at (b2 |- 0,\barHeight) {$b_2$};
\fi

\draw[gray, thick] (b2) rectangle (aj |- {0,\barHeight});
\node at ($(b2)!0.5!(aj)+(0,0.17)$) {$\cdots\cdots$};
\draw (aj) -- (aj |- 0,\barHeight);
\stepcounter{pt}
\draw[dashed] (aj) -- (aj |- 0,\barHeight);
\ifodd\value{pt}
    \node[below] at (aj) {$a_j$};
\else
    \node[above] at (aj |- 0,\barHeight) {$a_j$};
\fi

\fill[red!30] (aj) rectangle (bj |- {0,\barHeight});
\draw (aj) rectangle (bj |- {0,\barHeight});   
\draw (bj) -- (bj |- 0,\barHeight);
\stepcounter{pt}
\draw[dashed] (bj) -- (bj |- 0,\barHeight);
\ifodd\value{pt}
    \node[below] at (bj) {$b_j$};
\else
    \node[above] at (bj |- 0,\barHeight) {$b_j$};
\fi

\draw[gray, thick] (bj) rectangle (ap |- {0,\barHeight});
\node at ($(bj)!0.5!(ap)+(0,0.17)$) {$\cdots\cdots$};
\draw (ap) -- (ap |- 0,\barHeight);
\stepcounter{pt}
\draw[dashed] (ap) -- (ap |- 0,\barHeight);
\ifodd\value{pt}
    \node[below] at (ap) {$a_p$};
\else
    \node[above] at (ap |- 0,\barHeight) {$a_p$};
\fi

\fill[red!30] (ap) rectangle (bp |- {0,\barHeight});
\draw (ap) rectangle (bp |- {0,\barHeight});   
\draw (bp) -- (bp |- 0,\barHeight);
\stepcounter{pt}
\draw[dashed] (bp) -- (bp |- 0,\barHeight);
\ifodd\value{pt}
    \node[below] at (bp) {$b_p$};
\else
    \node[above] at (bp |- 0,\barHeight) {$b_p$};
\fi

\fill[blue!30] (bp) rectangle (end |- {0,\barHeight});
\draw (bp) rectangle (end |- {0,\barHeight});   
\draw (end) -- (end |- 0,\barHeight);
\stepcounter{pt}
\draw[dashed] (end) -- (end |- 0,\barHeight);
\ifodd\value{pt}
    \node[below] at (end) {$\rightEnd$};
\else
    \node[above] at (end |- 0,\barHeight) {$\rightEnd$};
\fi

\draw[thick] (start) -- (end);

\def\yStart{-2.5}        
\def\stepY{-0.9}         

\path let \p1 = (a1), \p2 = (b1) in
    \pgfextra{\pgfmathsetmacro{\len}{\x2-\x1}}
    \pgfextra{\pgfmathsetmacro{\h}{\len * 0.618}}
    node[block, minimum width=\len pt, minimum height=\h pt, inner sep=0, fill=yellow!30] (C1) at ($(a1)!0.5!(b1) + (0,-1.6579)$) {$C_1$};

\path let \p1 = (a2), \p2 = (b2) in
    \pgfextra{\pgfmathsetmacro{\len}{\x2-\x1}}
    \pgfextra{\pgfmathsetmacro{\h}{\len * 0.618}}
    node[block, minimum width=\len pt, minimum height=\h pt, inner sep=0, fill=green!30] (C2) at ($(a2)!0.5!(b2) + (0,-3.0793)$) {$C_2$};

\path let \p1 = (aj), \p2 = (bj) in
    \pgfextra{\pgfmathsetmacro{\len}{\x2-\x1}}
    \pgfextra{\pgfmathsetmacro{\h}{\len * 0.618}}
    node[block, minimum width=\len pt, minimum height=\h pt, inner sep=0, fill=orange!30] (Cj) at ($(aj)!0.5!(bj) + (0,-4.7479)$) {$C_j$};

\path let \p1 = (ap), \p2 = (bp) in
    \pgfextra{\pgfmathsetmacro{\len}{\x2-\x1}}
    \pgfextra{\pgfmathsetmacro{\h}{\len * 0.618}}
    node[block, minimum width=\len pt, minimum height=\h pt, inner sep=0, fill=purple!20] (Cp) at ($(ap)!0.5!(bp) + (0,-6.4165)$) {$C_p$};

\node[ddot] (ddots1) at ($(C2)!0.5!(Cj)$) {$\ddots$};
\node[ddot] (ddots2) at ($(Cj)!0.5!(Cp)$) {$\ddots$};

\draw[arrow] ($(a1)!0.5!(b1)$) -- (C1);
\draw[arrow] ($(a2)!0.5!(b2)$) -- (C2);
\draw[arrow] ($(aj)!0.5!(bj)$) -- (Cj);
\draw[arrow] ($(ap)!0.5!(bp)$) -- (Cp);

\draw[dashed, ->, shorten >= -10] ($(b2)!0.5!(aj)$) -- (ddots1) ;
\draw[dashed, ->, shorten >= -10] ($(bj)!0.5!(ap)$) -- (ddots2);

\draw[thick] (0,-7.3744) rectangle (10.8,-0.7);

\node[below, yshift=-8pt] at (5.4,-7.2) {Column index of $\mathcal{C}(k_0)$};
\node[rotate=90, left, xshift=-10pt] at (-0.33,-1.8) {Row index of $\mathcal{C}(k_0)$};

\def\len{0.25}

\fill[gray!30] (0.8-\len,-1.1944-\len) rectangle (0.8,-1.1944); 
\draw (0.8-\len,-1.1944-\len) rectangle (0.8,-1.1944); 

\fill[gray!30] (2.3,-2.1214) rectangle (2.3+\len,-2.1214+\len); 
\draw (2.3,-2.1214) rectangle (2.3+\len,-2.1214+\len); 

\fill[gray!30] (3.1-\len,-2.6158-\len) rectangle (3.1,-2.6158); 
\draw (3.1-\len,-2.6158-\len) rectangle (3.1,-2.6158); 

\fill[gray!30] (4.6,-3.5428) rectangle (4.6+\len,-3.5428+\len); 
\draw (4.6,-3.5428) rectangle (4.6+\len,-3.5428+\len); 

\fill[gray!30] (5.8-\len,-4.2844-\len) rectangle (5.8,-4.2844); 
\draw (5.8-\len,-4.2844-\len) rectangle (5.8,-4.2844); 

\fill[gray!30] (7.3,-5.2114) rectangle (7.3+\len,-5.2114+\len); 
\draw (7.3,-5.2114) rectangle (7.3+\len,-5.2114+\len); 

\fill[gray!30] (8.5-\len,-5.953-\len) rectangle (8.5,-5.953); 
\draw (8.5-\len,-5.953-\len) rectangle (8.5,-5.953);

\fill[gray!30] (10,-6.88) rectangle (10+\len,-6.88+\len); 
\draw (10,-6.88) rectangle (10+\len,-6.88+\len); 

\end{tikzpicture}
\caption{Illustration of the correspondence between the integer intervals 
$\mathcal{I}_j = \llbracket a_j, b_j \rrbracket$ and the principal submatrices 
$C_j$ of $\mathcal{C}(k_0)$ defined in Section~\ref{sec: generalized capcitance 
matrix}. The large box represents $\mathcal{C}(k_0)$. Colored blocks indicate the 
positions of the submatrices $C_j$; adjacent gray squares ($1 \times 1$) denote 
possibly nonzero entries outside the blocks, while all remaining entries are zero. 
In the chain diagram above, blue segments are non-resonant and red segments are 
resonant.}
\label{fig:interval_matrix}
\end{figure}
\subsection{Main results}\label{subsec:mainresult}
The following results are our main findings in this paper. An illustration of Theorem \ref{thm:mainresult1} can be found in Figure \ref{fig:all} and Figure \ref{fig:convergence}.


\begin{theorem} \label{thm:mainresult1}
   Let $\d\in\mathbb C$. Assume that the frequency-dependent capacitance matrix $\mathcal{C}(k_0)$ has $m$ nonzero eigenvalues, then  $m\leq\lf n/2\rf$. For cases when $\d \to 0$ , the scattering problem \eqref{equ: scattering problem} has exactly $n$ (nonzero) resonant frequencies near $k_0 v$ for $k_0 \in E$: 
    \begin{itemize}
        \item[$\bullet$] The first $2m$ eigenfrequencies are analytic in $\delta^{1/2}$ and have the following asymptotic expansion:
        \begin{equation*}
        \omega^\pm_j(\delta)=k_0v\pm v\sqrt{\frac{\lambda_j}{r}\delta}+\mc{O}(\delta),\quad j=1,2,\cdots,m,
        \end{equation*}
        where $\lambda_1,\cdots,\lambda_m$ are the $m$ nonzero eigenvalues of $\mathcal{C}(k_0)$. 
        
        Moreover, let $\lambda$ be a nonzero eigenvalue of $\mathcal{C}(k_0)$ with multiplicity $\mathfrak{m}$. Then $\lambda$ is an eigenvalue of $\mathfrak{m}$ submatrices  of $\mathcal{C}(k_0)$, denoted by $C_{j_1},\cdots,C_{j_\mathfrak{m}}$. 
    The $2\mathfrak{m}$ branches satisfying 
    $\omega^\pm(\delta) = k_0 v \pm v\sqrt{\lambda\delta/r} + \mathcal{O}(\delta)$ admit the higher-order expansion:
\begin{align}\label{equ: omega^i,pm expand}
\omega^{i,\pm}(\delta)=k_0v\pm v\sqrt{\frac{\lambda}{r}\delta}+\frac{v}{2r}\frac{\al_{j_i}^\top B_{j_i} \al_{j_i}}{\al_{j_i}^\top \mathcal{L}_{j_i}\al_{j_i}}\delta+\mc{O}(\delta^{3/2}),\quad i=1,2,\cdots,\mathfrak{m},
\end{align}
where $(\lambda,\al_{j_i})$ is an eigenpair of $C_{j_i}$,
\[
B_j=\diag\{\cot(t_{a_{j-1}}k_0)\chi_{a_j},0,\cdots,0,\cot(t_{b_{j+1}}k_0)\chi_{b_j}\},
\]
with the convention that $\cot(t_jk_0):=-\i$ when $j=0$ or $j=2N$, and $\mathcal{L}_j$ is defined in (\ref{equ: C_j decomposition}). Here, $a_j,b_j$ are the endpoints of $\mathcal{I}_j$ in \eqref{equ:partition}, and
\[
\chi_j=\begin{cases}
    1, & j\text{ odd},\\
    \dfrac{1}{\lambda t_j^2}, & j\text{ even}.
\end{cases}
\]
        
        \item[$\bullet$] The remaining $n-2m$ eigenfrequencies have the following asymptotic expansion:
       \begin{equation}\label{eq:eigenfrequencydelta1}
\omega_j(\delta)=k_0v+\mc{O}(\delta),\qquad j=1,2,\cdots,n-2m.
        \end{equation}
\end{itemize}
\end{theorem}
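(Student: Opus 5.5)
The plan is to characterize resonances as zeros of an explicit analytic function built from the total transfer matrix (Section~\ref{sec:4}), and then perform a Newton-polygon / Weierstrass-preparation analysis around $k_0$.

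\textbf{Characteristic function.} For each segment of optical length $t_j$, with $j$ odd inside a resonator and $j$ even in a gap, the local transfer matrix is built from $\cos(t_jk)$ and $\sin(t_jk)$. The interfaces carry the factors $\delta$ and $\delta^{-1}$ (together with $r$). Multiplying these matrices and imposing the outgoing conditions $u'=\mp\i k u$ at $x_1,x_{2N}$ gives a scalar function $F(k,\delta)$, analytic in both variables after multiplying by a suitable power of $\delta$. Resonances are exactly the zeros $k=\omega/v$ of $F$.

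\textbf{Local expansion.} Write $k=k_0+\varepsilon$. For $j\in I$ we have $\sin(t_jk)=(-1)^{m_j}t_j\varepsilon+O(\varepsilon^2)$. For $j\notin I$, $\sin(t_jk_0)\neq0$, so those segments act as $O(1)$ invertible couplings.

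\textbf{Counting the roots.} At $\delta=0$, the leading part of $F$ should factor as $\varepsilon^n$ times a nonzero constant, so Rouché's theorem gives exactly $n$ roots near $k_0$.

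\textbf{The scaling $\varepsilon\sim\delta^{1/2}$.} Setting $\varepsilon=\delta^{1/2}\mu$, I expect the leading coefficient to reduce, after factoring out nonzero constants, to
\[
\det\bigl(\mu^2 r\,\mathcal{L}-\mathcal{C}(k_0)\bigr)\,\mu^{n-2\cdot\mathrm{rank}}.
\]
Here $\mathcal{L}$ is a diagonal "mass" matrix, and the tridiagonal structure of $\mathcal{C}(k_0)$ arises from expanding the product of transfer matrices: each pair of adjacent resonant segments contributes a coupling $\theta_j(k_0)$. Using Theorem~\ref{thm: C_j's}, this determinant factors block by block as $\prod_j\det(\mu^2 r\mathcal{L}_j-C_j)$, up to the null directions.

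\textbf{Consequences of the factorization.} The nonzero roots $\mu=\pm\sqrt{\lambda_j/r}$ give $\omega=k_0v\pm v\sqrt{\lambda_j\delta/r}+O(\delta)$. The remaining $n-2m$ roots have $\mu=0$, which forces the $O(\delta)$ scaling in \eqref{eq:eigenfrequencydelta1}.

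\textbf{The bound $m\le\lfloor n/2\rfloor$.} This follows from the structure: each $C_j$ has size of order $n_j$, and its rank is at most $\lfloor n_j/2\rfloor$. The reason is that $\theta_i(k_0)\neq0$ only for consecutive resonant indices, and alternate rows of $C_j$ are dependent. Equivalently, the roots come in $\pm$ pairs, and only $n$ roots are available.

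\textbf{Analyticity.} For simple $\lambda$, analyticity in $\delta^{1/2}$ follows from the implicit function theorem applied to $G(\mu,\delta^{1/2})=\delta^{-n/2}F(k_0+\delta^{1/2}\mu,\delta)$ at a simple root $\mu_0$.

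\textbf{Second-order term.} To obtain \eqref{equ: omega^i,pm expand}, I would expand $G$ to the next order in $\delta^{1/2}$. The correction comes from the non-resonant neighbors $a_j-1$ and $b_j+1$, which enter through $\cot(t_{a_j-1}k_0)$ and $\cot(t_{b_j+1}k_0)$. At the ends of the chain, the radiation condition contributes $-\i$ instead. This produces the perturbation $B_j$ of the block. First-order perturbation theory for the generalized eigenproblem $C_j\bm\alpha=\mu^2r\mathcal{L}_j\bm\alpha$ then gives
\[
\mu=\mu_0+\delta^{1/2}\,\frac{\al^\top B_j\al}{2r\,\al^\top\mathcal{L}_j\al}.
\]
A bilinear rather than sesquilinear form appears because $C_j$ becomes symmetric after the diagonal rescaling. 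The factors $\chi$ account for whether the endpoint is a resonator or a gap.

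\textbf{Main obstacle.} I expect the hardest step to be the rigorous extraction of the leading determinant from the transfer-matrix product. This means showing that all other terms are genuinely of higher order and that the block decoupling holds even though the blocks are separated by non-resonant segments. For multiple eigenvalues $\lambda$ shared by several $C_{j_i}$, one must also show that the blocks decouple up to $O(\delta)$. Then each branch follows its own $C_{j_i}$, and degenerate perturbation theory reduces to the separate scalar formulas. I would handle this by a Schur-complement elimination of the non-resonant segments. This should show that the cross-block interactions are of order $\delta$ times $O(1)$, and therefore enter only at order $\delta^{3/2}$.
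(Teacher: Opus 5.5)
Your architecture matches the paper's: a transfer-matrix characteristic function, a Rouch\'e count, the rescaling $k-k_0=\delta^{1/2}\mu$, and the implicit function theorem at simple roots. However, the steps that carry the content are asserted rather than derived, and the multiple-eigenvalue step fails in a configuration where the stated formula itself fails.

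\textbf{The reduction is not derived.} The leading-order reduction you say you ``expect'' is exactly what the paper has to prove. That is Theorem~\ref{thm: G_{I,l} expand}: a combinatorial count of the selected $S$-factors, using $RMR=\eta(M)R$, shows that the $\nu^l$-coefficient vanishes to order $\ge n-2l$ in $z=k-k_0$. Equality holds only when the $S$'s sit on adjacent resonant pairs, and this produces $C_{I,l}$. The paper then needs the lower boundary $S(l)$ of Lemma~\ref{thm: order lower boundary}. Your candidate reduced equation is also off. Since $C_j=\mathcal L_j^{-1}A_j^\top\mathcal S_j^{-1}A_j$, the pencil $\mu^2 r\mathcal L-\mathcal C(k_0)$ counts $\mathcal L$ twice, and a full determinant has the wrong degree. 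The correct balance function is $c^{n-2m}P_I(2c^2)$, with $k=k_0+\nu^{1/2}c$ and $P_I$ the degree-$m$ nonzero characteristic polynomial.

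\textbf{The $O(\delta)$ branches.} The claim that $\mu=0$ forces $O(\delta)$ does not follow: $\mu\to0$ only gives $o(\delta^{1/2})$. You need the slope $-1$ edge of the Newton polygon, i.e.\ that the $\nu^l$-coefficient has order $\ge n-m-l$ for $m<l\le n-m$. This comes from the $2^{\lceil n_j/2\rceil}I$ terms of odd-length blocks (Proposition~\ref{prop: asymptotic Gj}), and nothing in your sketch controls $l>m$. Separately, $m\le\lfloor n/2\rfloor$ holds because $\operatorname{rank}C_j\le\min(s,l)=\lfloor n_j/2\rfloor$ through the $s\times l$ matrix $A_j$, not because alternate rows are dependent.

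\textbf{The $\delta$-term.} The crux is showing that the reduced problem is perturbed at order $\delta^{1/2}$ by exactly $B_j$ with the weights $\chi_{a_j},\chi_{b_j}$. The paper gets this from the coefficients $c_{n-2l+1,l}$, including $g_{m+1}$ (Lemma~\ref{thm: g_m+1 expand}), together with Lemma~\ref{thm: eig of C}(iii). A Hellmann--Feynman argument on a derived reduced pencil would be a nice alternative, but you have not derived the pencil.

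\textbf{Multiple eigenvalues.} Your decoupling claim fails when two blocks sharing $\lambda$ are consecutive and separated by a single non-resonant segment. The factor between them is then the invertible $L_{a_{j+1}-1}(k_0)$, not a product containing the rank-one $R$ that makes the cross term factor. The coupling is therefore $O(\delta^{1/2})$ in $\mu$, the same size as $B_j$, and it changes the $O(\delta)$ coefficient. In fact \eqref{equ: omega^i,pm expand} is false there. Take $r=1$ and $\bm t=(t_1,t_2,t_3,t_4,t_3,t_2,t_1)$ with $t_2k_0,t_3k_0\in\pi\mathbb Z$ and $t_1k_0,t_4k_0\notin\pi\mathbb Z$. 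Then $\mathcal I_1=\llbracket 2,3\rrbracket$, $\mathcal I_2=\llbracket 5,6\rrbracket$, and $\lambda=1/(t_2t_3)$ is double. Formula \eqref{equ: omega^i,pm expand} predicts the common $\delta$-coefficient $d=\frac{\cot(t_1k_0)}{2t_2}+\frac{\cot(t_4k_0)}{2t_3}$ of $k=\omega/v$ for all four branches. Reducing to even and odd modes about the midpoint of segment $4$ gives instead
\[
d-\frac{1}{2t_3\sin(t_4k_0)}\ \ (\text{even}),\qquad d+\frac{1}{2t_3\sin(t_4k_0)}\ \ (\text{odd}).
\]
A Newton-polygon computation for this configuration gives the same splitting.

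Your decoupling does hold when the degenerate blocks are separated by at least two non-resonant segments, or by a block not sharing $\lambda$, because the intervening leading-order factor is then rank one. The paper's Step~3 of Theorem~\ref{thm: k(nu) expand} does not cover the one-segment case either. Its perturb-and-take-the-limit argument tacitly needs the $\mathcal O(\nu^{3/2})$ remainder, and the $\nu$-disc of analyticity, to be uniform as $\varepsilon\to0$, and that uniformity is exactly what fails here.
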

\begin{figure}[!htb]
\centering

\begin{subfigure}[b]{0.24\textwidth}
    \includegraphics[height=0.18\textheight]{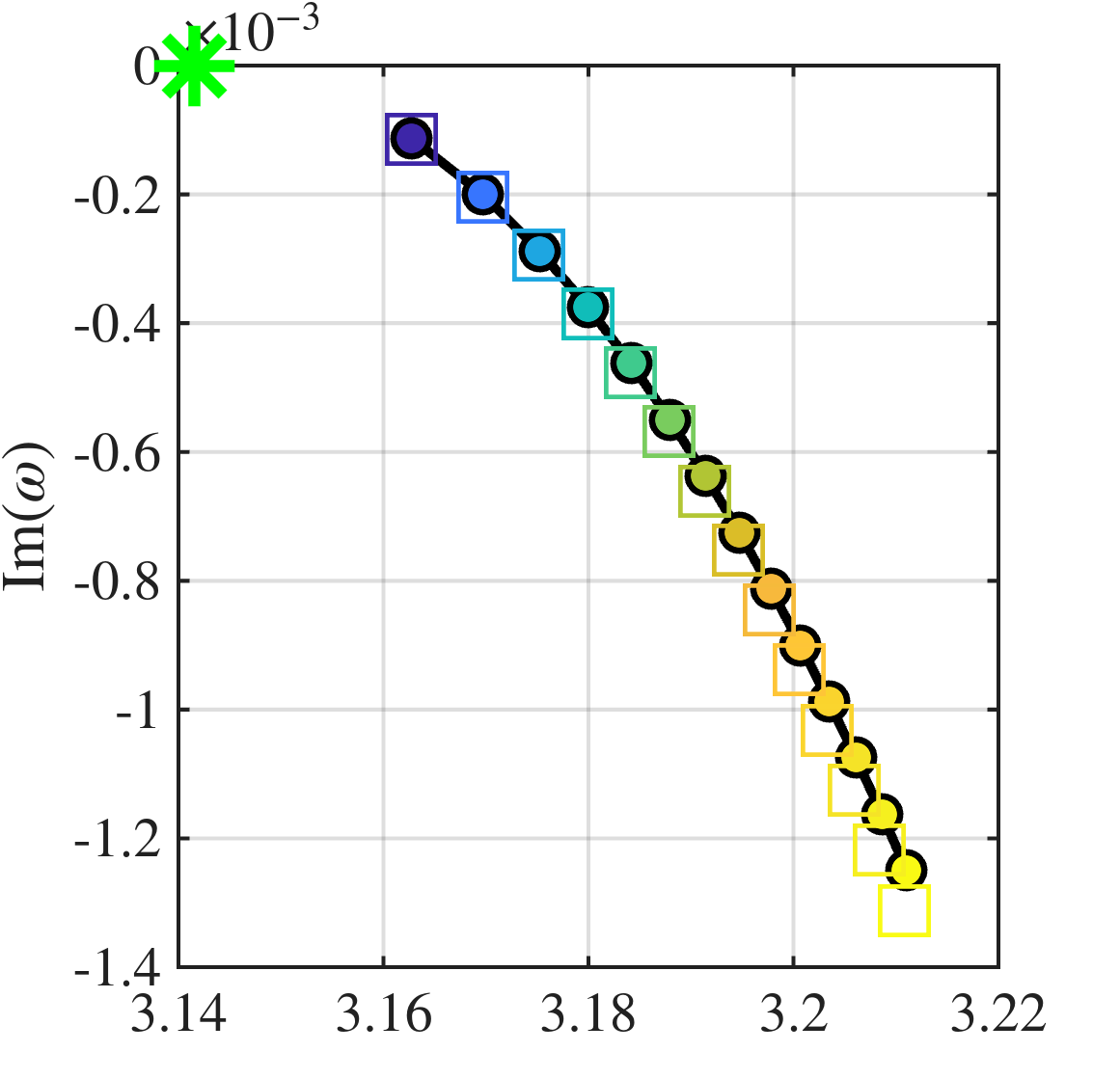}
    \caption{$c_1=1,c_2=-\textstyle\frac{1+\i}{4}.$}   
    \label{fig:sub1}
\end{subfigure}
\begin{subfigure}[b]{0.24\textwidth}
    \includegraphics[height=0.18\textheight]{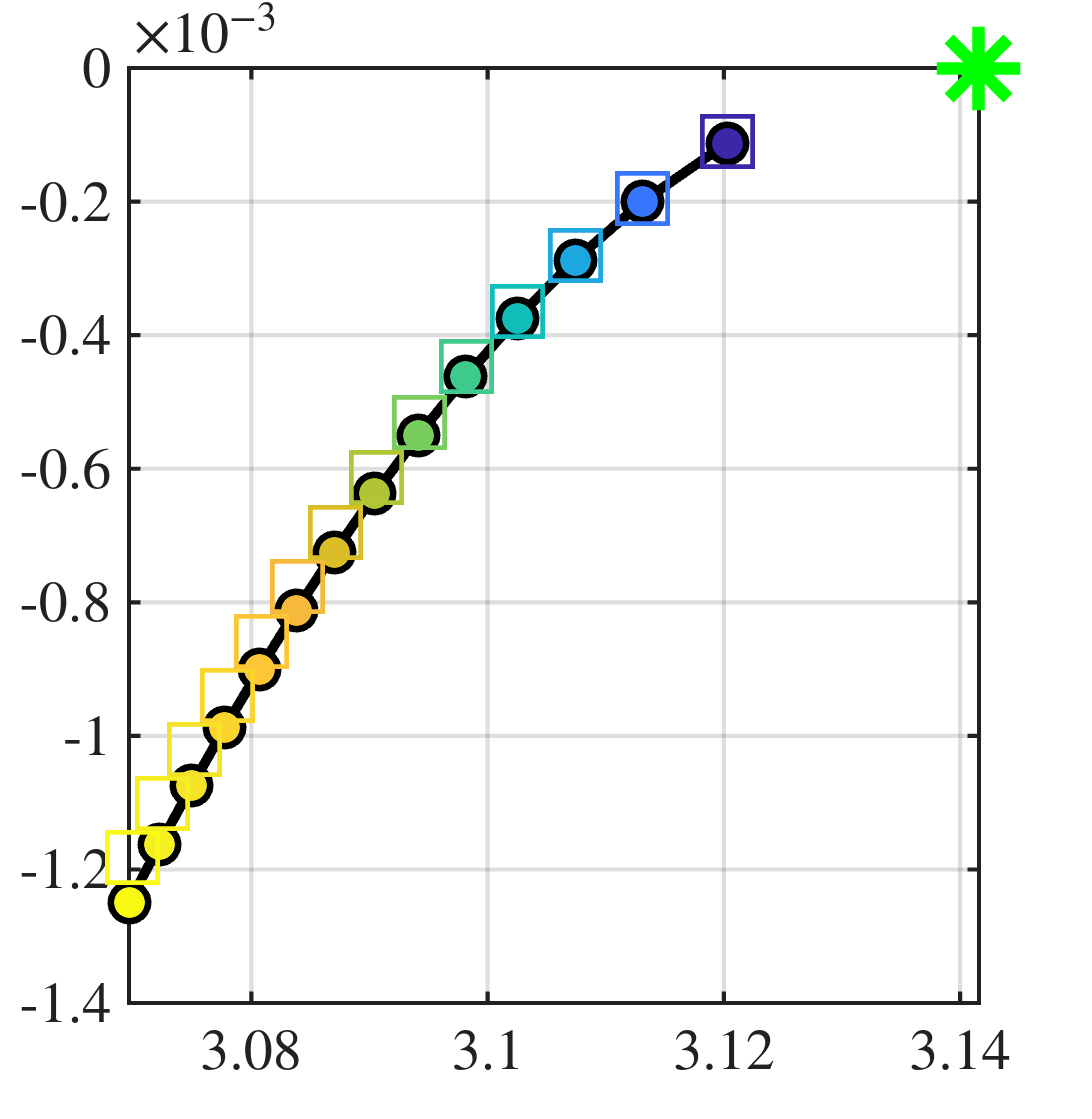}
    \caption{$c_1=-1,c_2=-\textstyle\frac{1+\i}{4}.$}
    \label{fig:sub2}
\end{subfigure}
\begin{subfigure}[b]{0.24\textwidth}
    \includegraphics[height=0.18\textheight]{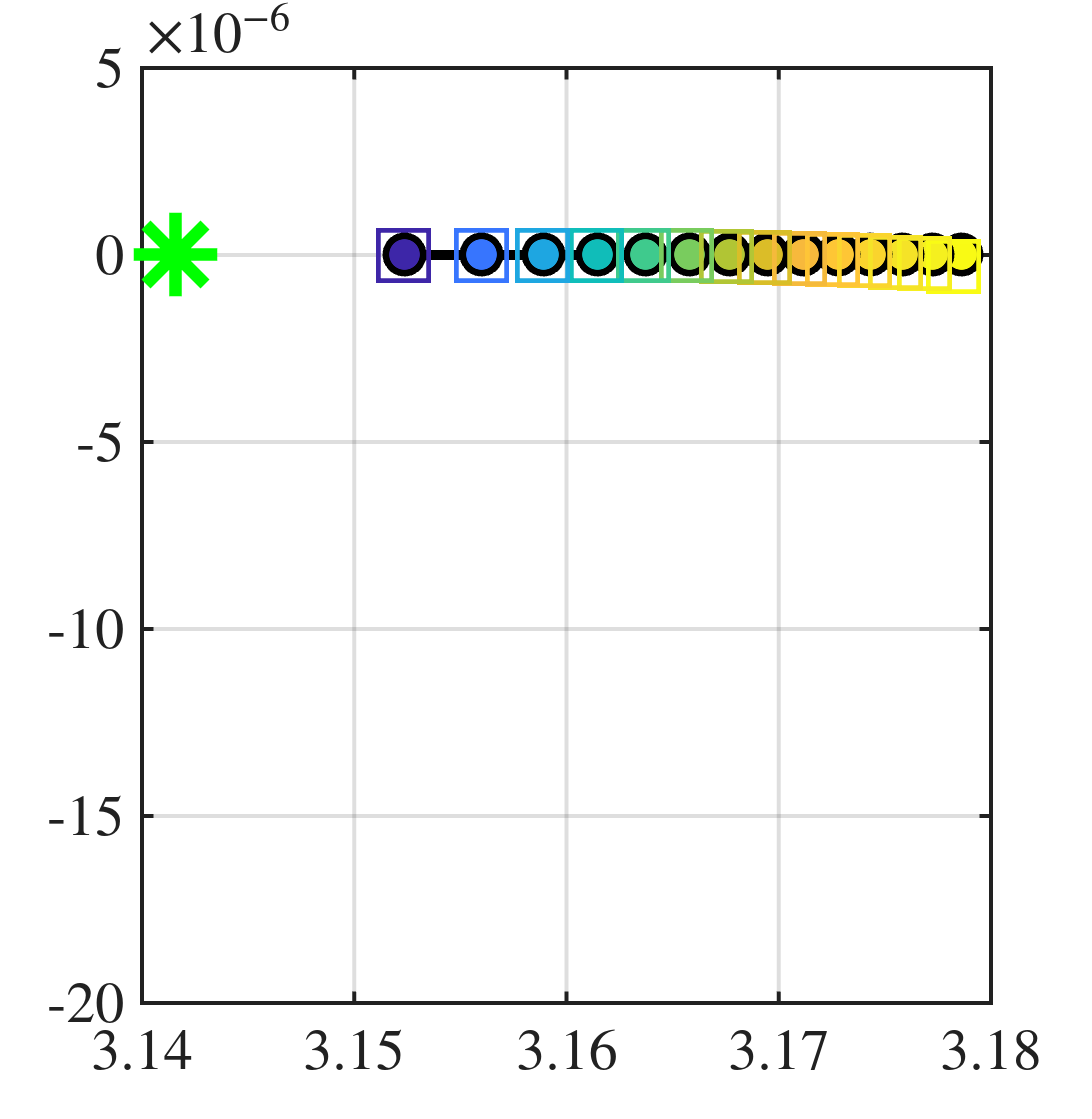}
    \caption{$c_1=1/2,c_2=1/3.$}
    \label{fig:sub3}
\end{subfigure}
\begin{subfigure}[b]{0.24\textwidth}
    \includegraphics[height=0.18\textheight]{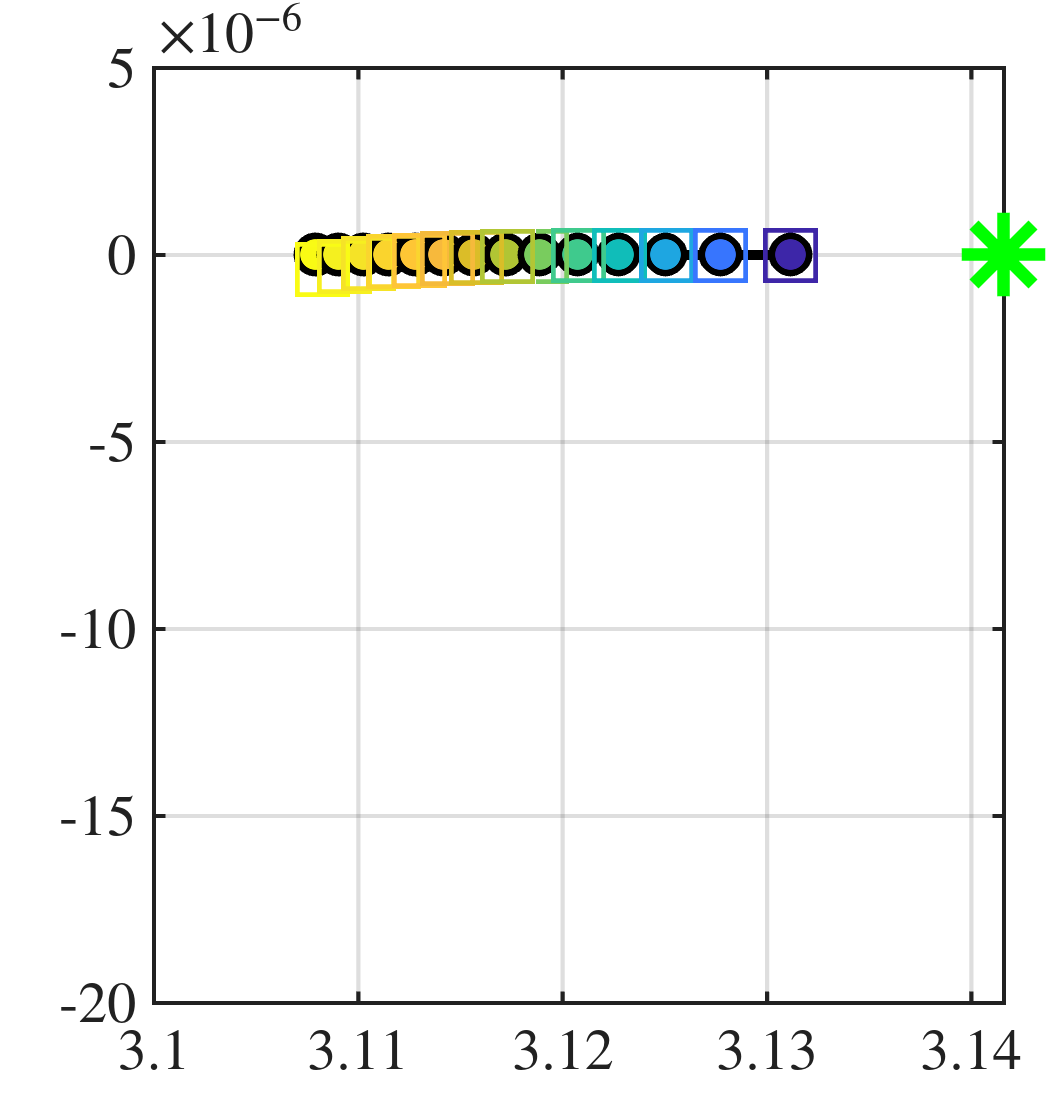}
    \caption{$c_1=-1/2,c_2=1/3.$}
    \label{fig:sub4}
\end{subfigure}

\begin{subfigure}[b]{0.24\textwidth}
    \raisebox{0pt}{\includegraphics[height=0.18\textheight]{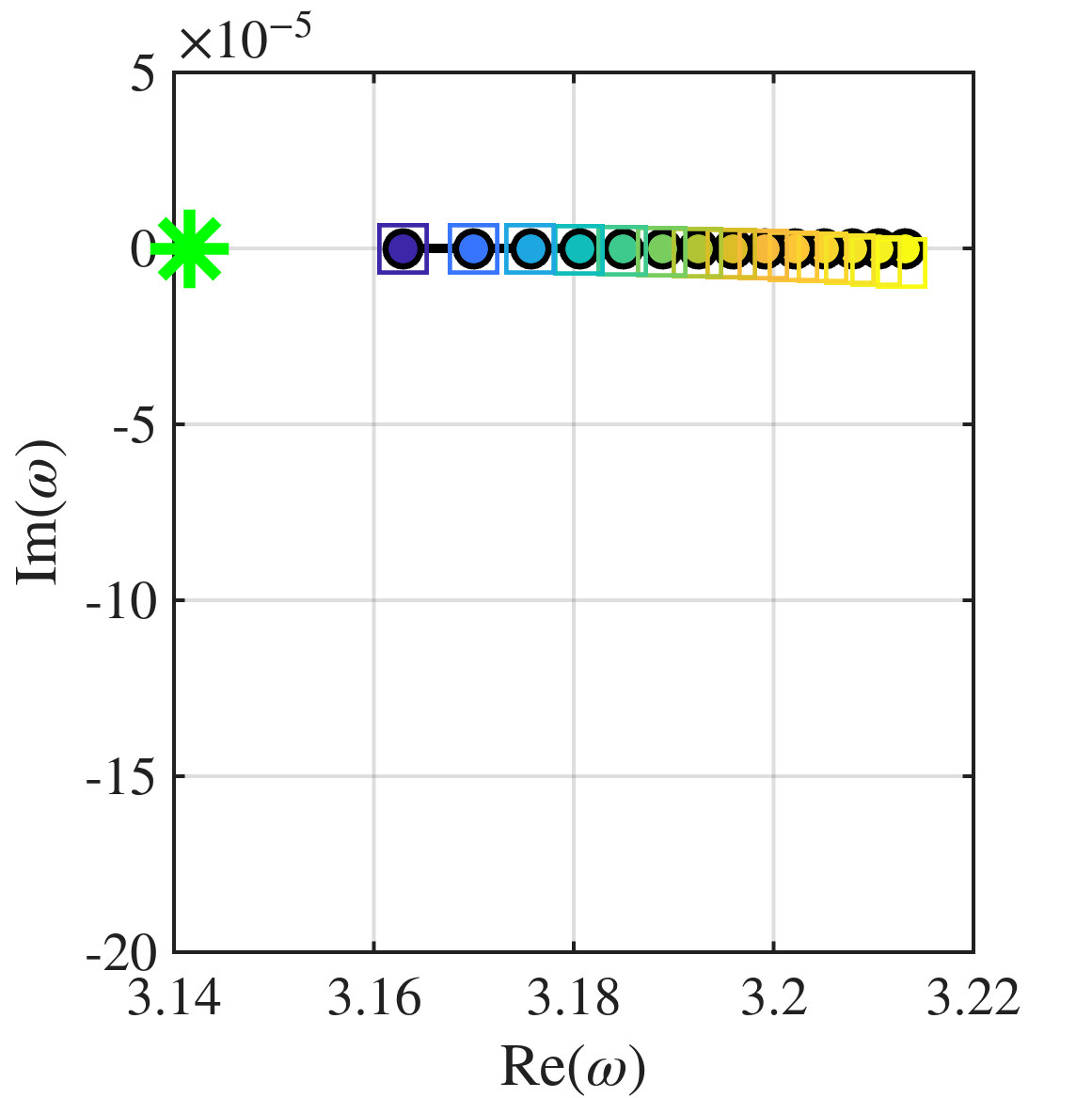}}
    \caption{$c_1=1,c_2=1/6.$}
    \label{fig:sub5}
\end{subfigure}
\begin{subfigure}[b]{0.24\textwidth}
    \raisebox{0pt}{\includegraphics[height=0.18\textheight]{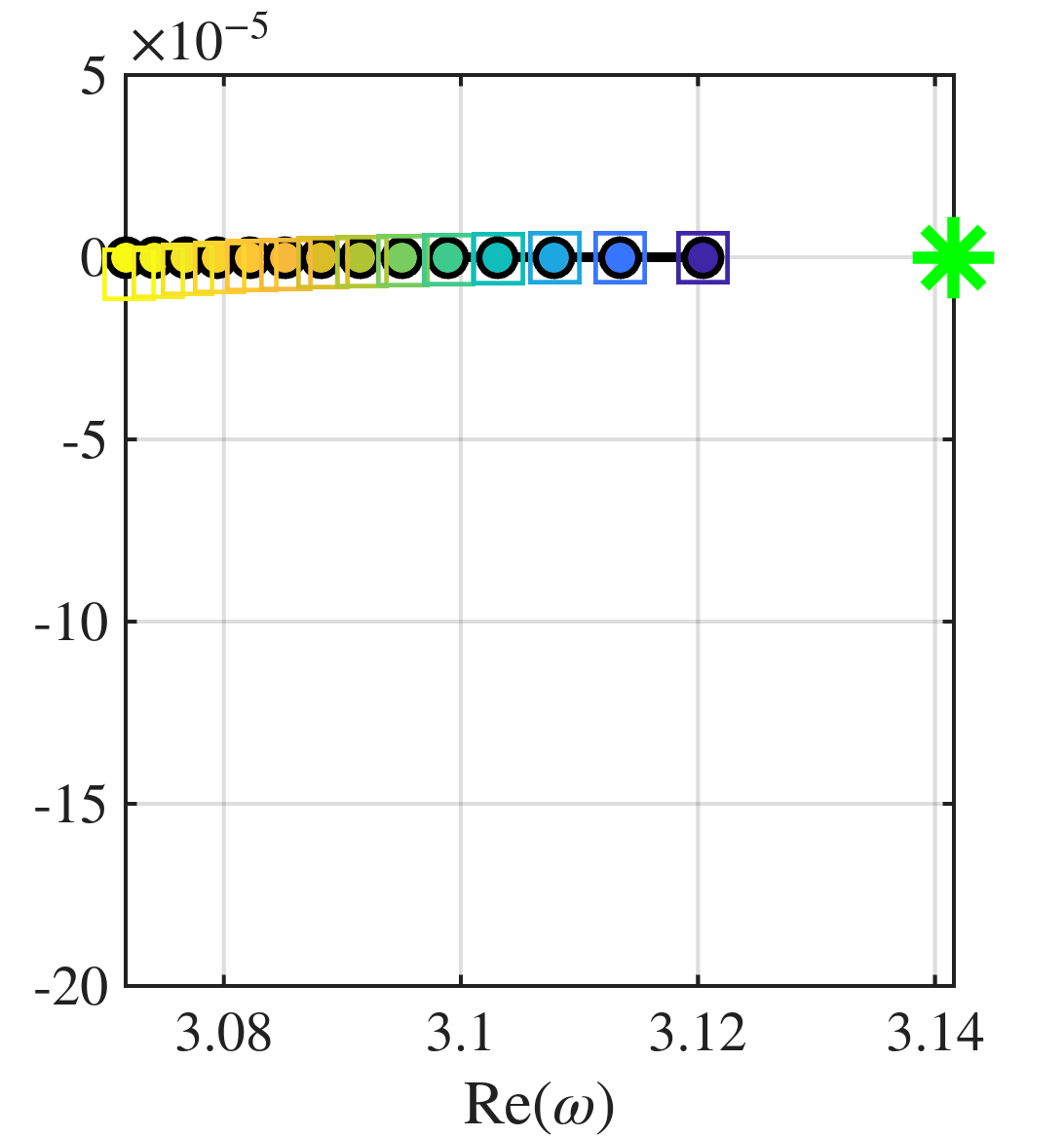}}
    \caption{$c_1=-1,c_2=1/6.$}
    \label{fig:sub6}
\end{subfigure}
\begin{subfigure}[b]{0.24\textwidth}
    \raisebox{0pt}{\includegraphics[height=0.18\textheight]{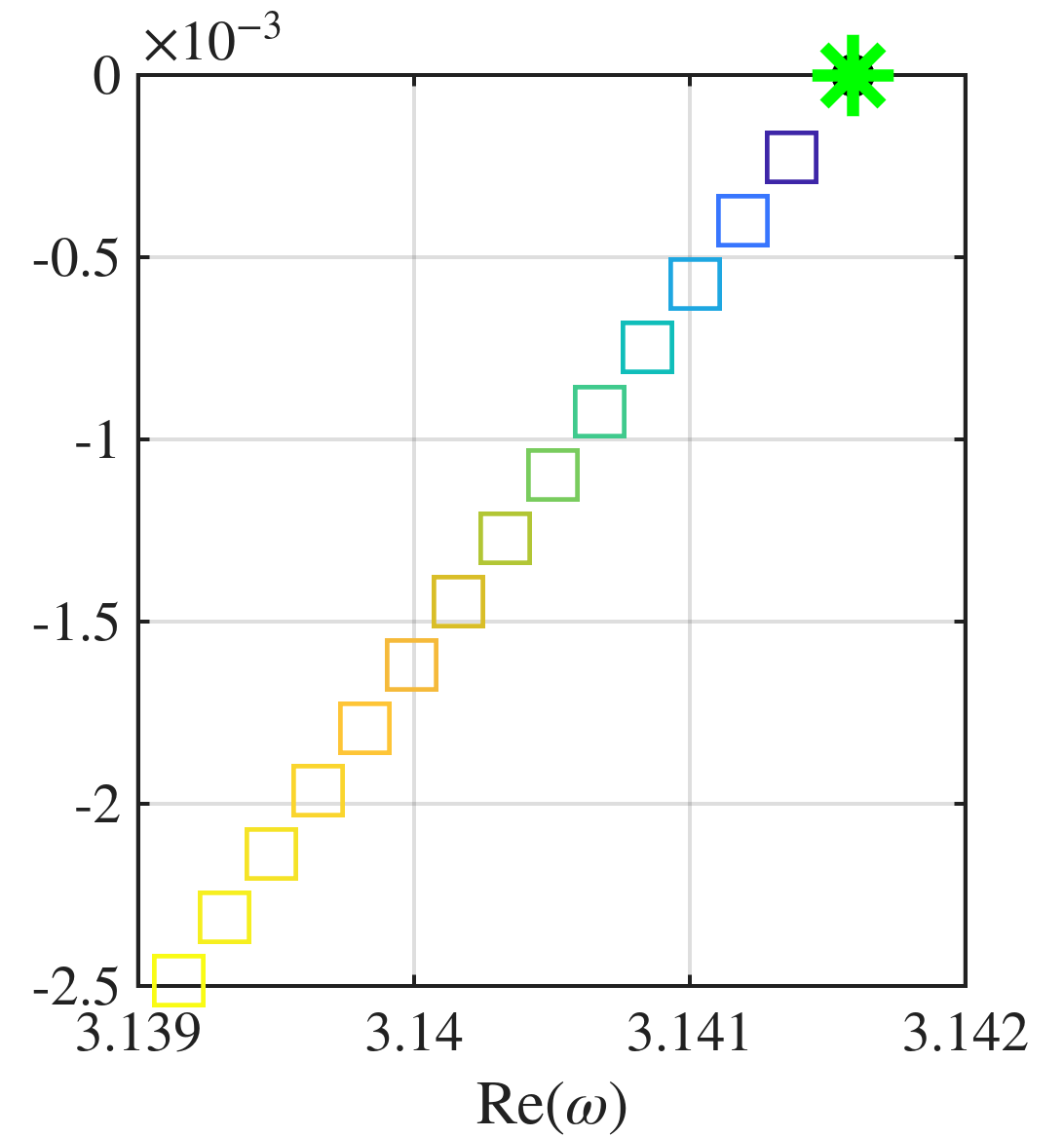}}
    \caption{$\omega(\delta)=\pi+\mc{O}(\delta)$.}
    \label{fig:sub7}
\end{subfigure}
\begin{subfigure}[b]{0.24\textwidth}
\raisebox{16pt}{\includegraphics[height=0.18\textheight]{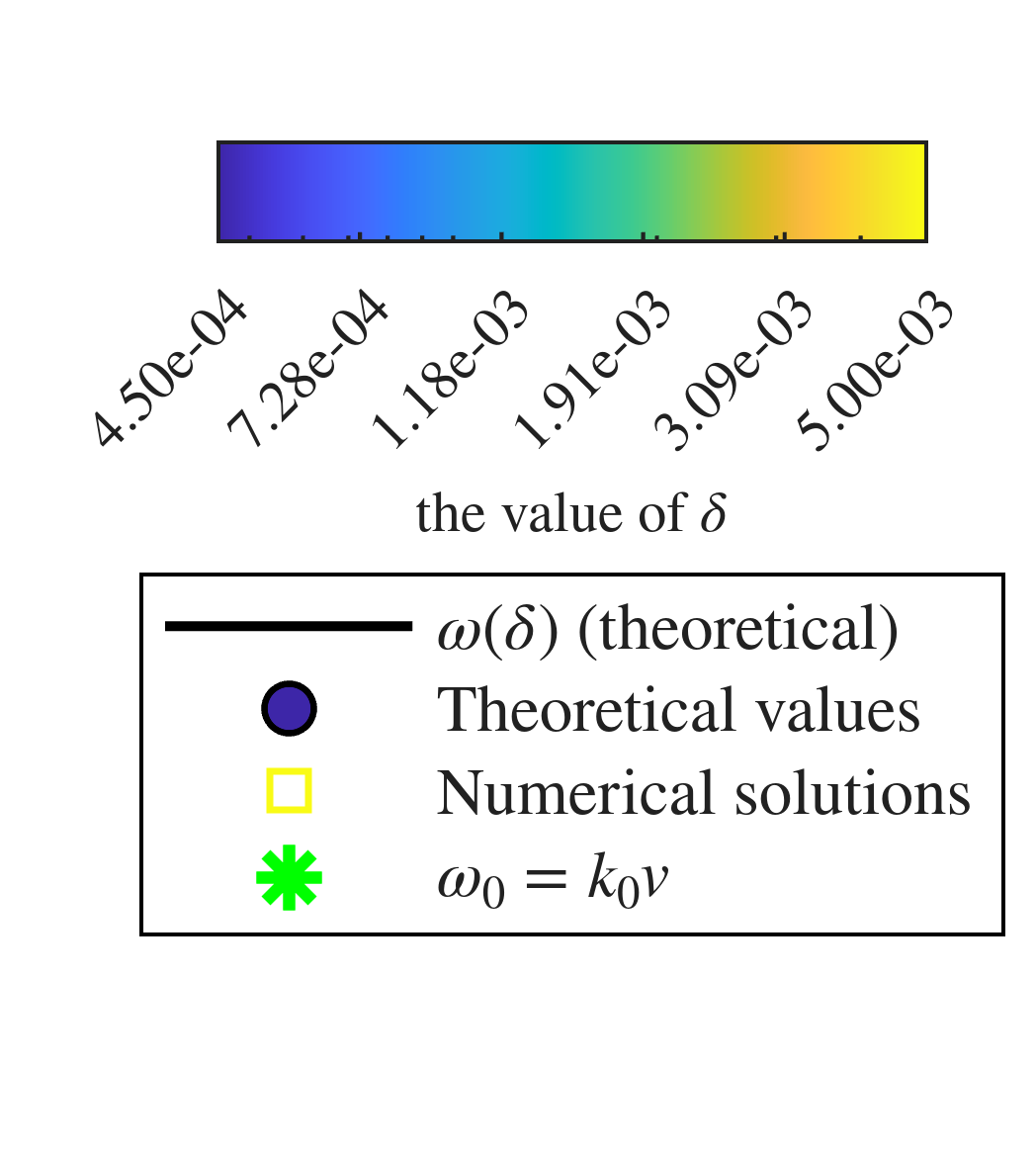}}
    \label{fig:sub8}
\end{subfigure}

\caption{Comparison of theoretical asymptotic expansions with numerical results for the 
resonant frequencies. The parameters are $N=6$, $\bm{t} = (1, 2, 1, 0.75, 1.25, 1, 2, 2, 
1, 1.5, 0.5)^{\top}$, $k_0 = \pi$, $r = 1$, and $v = 1$. The nonzero eigenvalues of 
$\mathcal{C}(k_0)$ are $\lambda = 1$ (with multiplicity two) and $\lambda = 0.25$ 
(simple). For branches~(a)--(f), the refined expansion~\eqref{equ: omega^i,pm 
expand} gives $\omega(\delta) = \pi + c_1 \delta^{1/2} + c_2 \delta + 
\mathcal{O}(\delta^{3/2})$, with coefficients $(c_1, c_2)$ indicated in each 
panel. Branch~(g) corresponds to a resonant frequency satisfying 
$\omega(\delta) = \pi + \mathcal{O}(\delta)$ as in \eqref{eq:eigenfrequencydelta1}.}
\label{fig:all}
\end{figure}

\begin{figure}[!htb]
    \centering
    \includegraphics[width=0.6\textwidth]{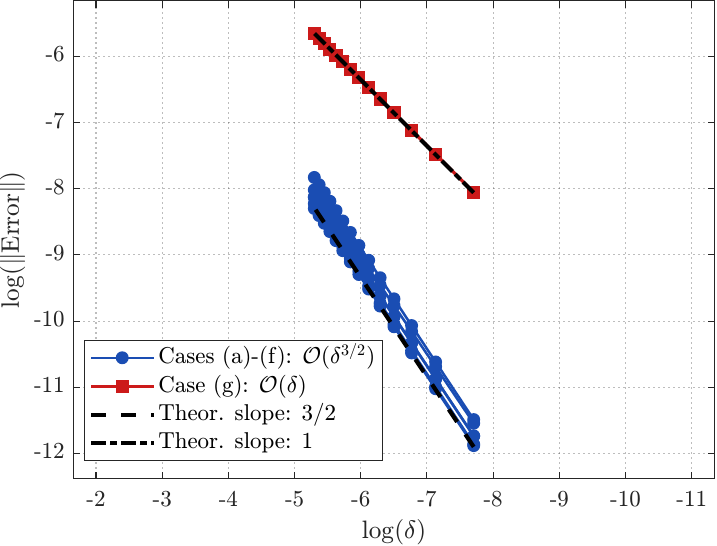} 
    \caption{Log-log plot of the error between numerical resonant frequencies and their asymptotic expansions.
The theoretical slopes of $\mathcal{O}(\delta^{3/2})$ (for branches (a)--(f) in Figure \ref{fig:all}) and $\mathcal{O}(\delta)$ (for branch (g) in Figure \ref{fig:all}) are confirmed by the linear fits, matching the predicted asymptotic orders.}
    \label{fig:convergence}
\end{figure}
     Suppose that $\ww(\d) =k_0v\pm v\sqrt{\lambda_j\delta/r}+\mc{O}(\delta)$ with $\lambda_j$ being a simple nonzero eigenvalue of $\mathcal C(k_0)$ is a resonant frequency for the scattering problem \eqref{equ: scattering problem} and let $u(x)$ be the corresponding non-trivial solution.
    
     For $k_0=0$, we recall from \cite{feppon.cheng.ea2023Subwavelength} that
    \begin{equation}\label{equ: u(x) estimate}
    u(x)=\begin{cases}
            \alpha_j+\mc{O}(\d^{1/2}), &x\in (x_{2j-1}, x_{2j}),\qquad j=1,2,\cdots ,N,\\
            \alpha_j+\beta_j(x-x_{2j})+\mc{O}(\d^{1/2}), &x\in(x_{2j},x_{2j+1}), \qquad j=1,2,\cdots ,N-1,
        \end{cases}
    \end{equation}
    where $\al=(\alpha_1,\alpha_2,\cdots,\alpha_N)^{\top}$ is the corresponding eigenvector of the capacitance matrix $\mathcal{C}(0)$ and $\beta_j=\tfrac{\alpha_{j+1}-\alpha_j}{s_j},j=1,2,\cdots,N-1$. 
    
Beyond the subwavelength regime, that is, for $k_0 \neq 0$, deriving an explicit asymptotic expression for the eigenmodes generalizing~\eqref{equ: u(x) estimate} 
is considerably more involved, as shown in~\Cref{sec:6}. For simplicity, we restrict to the case where $\lambda$ is a simple eigenvalue of $\mathcal{C}(k_0)$. The multiple eigenvalue case requires additional assumptions and is illustrated 
through examples in~\Cref{sec: multiple case}, while a rigorous analysis is deferred to future work. 


We now state the precise setup and result. Without loss of generality, we set $r=1$. We let $\lambda$ be a simple eigenvalue of $\mathcal{C}(k_0)$. By Theorem \ref{thm: C_j's}, there exists a unique index $j^*$ such that $\lambda$ is a simple eigenvalue of a submatrix $C_{j^*}$ of $\mathcal{C}(k_0)$. Let $\mathcal{I}_{j^*}=\lb a_{j^*},b_{j^*}\rb$ be the corresponding target resonant interval, and let $s$ be the number of even integers in $\mathcal{I}_{j^*}$, namely $2\lceil a_{j^*}/2\rceil,2\lceil a_{j^*}/2\rceil+2,\dots,2\lfloor b_{j^*}/2\rfloor$. Set $j_0=2\lceil a_{j^*}/2\rceil-2$. Let $k_0\neq 0$ and suppose that $k(\delta)=k_0\pm\sqrt{\lambda\delta/r}+\mathcal{O}(\delta)$ is a resonant frequency and $u(x)$ its associated eigenmode.

\begin{theorem}[(Eigenmodes for a simple resonance)]\label{thm: eigenmode simple case}
Under the above settings, for each $1\leq j\leq s$, the eigenmode $u(x)$ has the form
\begin{align}\label{equ: u(x) approximate}
u(x)=\beta_{j_0+2j}\sin\bigl(k_0(x-x_{a_{j^*}})\bigr)+\mathcal{O}(\delta^{1/2}),\quad x\in(x_{j_0+2j},x_{j_0+2j+1}),
\end{align}
where $(\lambda,\be=(\beta_{j_0+2j})_{j=1}^s)$ is an eigenpair of the matrix $D_{j^*}$ defined in (\ref{equ: matrix D def}). Moreover, on any interval $(x_i,x_{i+1})$ with $i\notin\{j_0+2j:1\le j\le s\}$, there exists some positive integer $\gamma_i$ such that
\[
u(x)=\mathcal{O}(\delta^{\gamma_i/2}),\quad x\in(x_i,x_{i+1}).
\]
\end{theorem}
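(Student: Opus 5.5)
With $r=1$, I would build the eigenmode segment by segment. On each segment $(x_i,x_{i+1})$, whether it is a resonator or a gap, I write $u(x)=A_i\cos(k(x-x_i))+B_i\sin(k(x-x_i))$/$k$. Since $r=1$, the wave numbers inside and outside coincide, so the optical length of segment $i$ is exactly $t_i$. Propagating $(u,u')$ across a segment is a rotation by angle $kt_i$. At each interface, $u$ is continuous and the derivative is multiplied by $\delta$ or $\delta^{-1}$. This is the transfer-matrix representation of \Cref{sec:4}.

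The plan is to normalise $u$ so that its largest coefficient is $\mathcal{O}(1)$, and then to track orders in $\delta^{1/2}$ along the chain. Write $k=k_0+\eta$ with $\eta=\pm\sqrt{\lambda\delta}+\mathcal{O}(\delta)$.

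\textbf{Segments inside the resonant interval.} For $i\in\mathcal{I}_{j^*}$ we have $\sin(kt_i)=(-1)^{m_i}\eta t_i+\mathcal{O}(\delta)$ and $\cos(kt_i)=(-1)^{m_i}+\mathcal{O}(\delta)$. So each resonant segment acts on $(u,u')$ as $\pm$ the identity, up to an $\mathcal{O}(\delta^{1/2})$ coupling. In a resonator (odd index), $u'$ is $\mathcal{O}(\delta)$ relative to the neighbouring gaps, because of the $\delta$ in the flux conditions. Hence $u$ is nearly constant there and of size $\delta^{1/2}$ relative to the gap amplitudes.

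I therefore take the gap amplitudes $\beta_{j_0+2j}$, the coefficients of $\sin(k_0(x-x_{a_{j^*}}))$ in the even-indexed gaps of $\mathcal{I}_{j^*}$, as the $\mathcal{O}(1)$ unknowns. Matching $u$ and $\delta^{-1}u'$ across each resonator, using the expansions above to first order in $\eta$, should give a three-term recurrence of the form
\[
\eta^2\beta_{j_0+2j}=\delta\,\bigl(D_{j^*}\be\bigr)_j+\mathcal{O}(\delta^{3/2}).
\]
Here $D_{j^*}$ has the tridiagonal structure with entries built from the $\theta$'s, as in (\ref{equ: matrix D def}). The phase factors $(-1)^{m_i}$ are absorbed by the common reference point $x_{a_{j^*}}$, since $\sin(k_0(x-x_{a_{j^*}}))$ is continuous in $x$ and its values differ only by signs across resonant segments. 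Because $\eta^2/\delta\to\lambda$, $\be$ must satisfy $D_{j^*}\be=\lambda\be+\mathcal{O}(\delta^{1/2})$.

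Since $\lambda$ is simple for $C_{j^*}$, and I expect $D_{j^*}$ to be similar to $C_{j^*}$ restricted to the nonzero spectrum (checked via the decomposition (\ref{equ: C_j decomposition})), $\lambda$ is also simple for $D_{j^*}$. A standard perturbation argument then gives $\be=\be_0+\mathcal{O}(\delta^{1/2})$ with $\be_0$ the eigenvector. This yields (\ref{equ: u(x) approximate}) after replacing $k$ by $k_0$ in the sine, which costs $\mathcal{O}(\delta^{1/2})$ on bounded intervals.

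\textbf{Decay outside the resonant interval.} At the boundary of $\mathcal{I}_{j^*}$ the adjacent segment is non-resonant, so $\sin(k_0t_i)\neq0$. If the edge of $\mathcal{I}_{j^*}$ is a resonator, $u$ there is $\mathcal{O}(\delta^{1/2})$ and $u'$ is $\mathcal{O}(\delta^{3/2})$. The non-resonant transfer is invertible with $\mathcal{O}(1)$ norm, and each further interface multiplies or divides the derivative by $\delta$. Following the chain outward, each non-resonant segment carries amplitude at most a positive power of $\delta^{1/2}$. Entering another resonant interval $\mathcal{I}_{j}$, $j\neq j^*$, the amplitude cannot grow back to order one unless $\lambda$ is an eigenvalue of $C_j$, which simplicity excludes via \Cref{thm: C_j's}. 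This gives $u=\mathcal{O}(\delta^{\gamma_i/2})$ with $\gamma_i\ge1$, by induction on the distance from $\mathcal{I}_{j^*}$ and using the outgoing conditions at both ends. The same argument handles the resonators inside $\mathcal{I}_{j^*}$ (odd segments), with $\gamma_i=1$.

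\textbf{Main obstacle.} The hardest step is this last one. One must rule out cancellations in which a distant resonant block $\mathcal{I}_j$ amplifies the small incoming data. My approach would be to write the reduced system on each block as $(D_j-\eta^2/\delta)\bm y=\text{forcing}$ of order $\delta^{1/2}\cdot(\text{incoming})$. Since $\lambda\notin\sigma(D_j)$, the resolvent is uniformly bounded, and the smallness persists. Tracking the exact exponents $\gamma_i$ through this bookkeeping is tedious but mechanical.
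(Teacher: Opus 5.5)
There is a genuine gap, and it lies in the step you defer to the end. Propagating \emph{outward} from $\mathcal{I}_{j^*}$ is the unstable direction. The bare propagator $P(k,t_i)$ is $\mathcal{O}(1)$, but once the interface scalings are included, the map on exterior Cauchy data across a non-resonant resonator $(x_i,x_{i+1})$ is
\[
\begin{pmatrix} u(x_{i+1})\\ u'(x_{i+1}^+)\end{pmatrix}=\begin{pmatrix}\cos(kt_i) & \delta\sin(kt_i)/k\\ -\delta^{-1}k\sin(kt_i) & \cos(kt_i)\end{pmatrix}\begin{pmatrix} u(x_{i})\\ u'(x_{i}^-)\end{pmatrix},
\]
whose norm is of order $\delta^{-1}$. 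Since $\sin(k_0t_i)\neq0$, a value $u(x_i)$ of size $\delta^{1/2}$ produces $u'\sim\delta^{-1/2}$ on the far side, and $\delta^{1/2}$ is all your argument provides near the block.

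Decay would require $u(x_i)=\delta\cot(k_0t_i)u'(x_i^-)/k_0$ to leading order. The next resonator then requires a finer cancellation, and so on. Nothing in your outline certifies these cancellations. The resolvent bound for $D_j-\eta^2/\delta$ only controls amplification inside the other resonant blocks, not this $\delta^{-1}$ gain at every non-resonant resonator.

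The first half of your argument depends on the same missing fact, so it is circular:
\begin{itemize}
\item The boundary rows of $\eta^2\be=\delta D_{j^*}\be+\dots$ are those of $D_{j^*}$ only if the data entering the block are negligible. For even $a_{j^*}$ you need $u(x_{a_{j^*}})=o(\delta^{1/2}|\be|)$; for odd $a_{j^*}$ you need the exterior derivative in gap $a_{j^*}-1$ to be $o(|\be|)$.
\item Your claim $u'=\mathcal{O}(\delta^{3/2})$ at a resonator edge of the block already presupposes the last row of the eigen-equation.
\item The perturbation step needs $|\be|$ bounded below, i.e.\ that your normalisation puts the maximum inside $\mathcal{I}_{j^*}$.
\end{itemize}
All three are the decay statement itself.

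The missing idea is to propagate \emph{inward} from each outgoing end, where the Cauchy data are explicit; this is the paper's route. In that direction you need growth rather than decay, and growth is generic. Each non-resonant resonator multiplies the derivative by $\delta^{-1}$ with a leading coefficient that is a product of sines, hence nonzero. So the orders increase monotonically toward the target block, with no cancellation to control.

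In a block $\mathcal{I}_j$ with $j\neq j^*$, the recursion reproduces every row of $A\al=\mu_1\mathcal{S}\be$, $A^\top\be=\mu_2\mathcal{L}\al$ (with $\mu_1\mu_2=\lambda$) except a single row at the exit end. Lemma~\ref{thm: eig of C}(iv) with $\lambda\notin\sigma(C_j)$ says this row fails. Hence the leading coefficient at the block exit is nonzero, and the order counting continues.

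Entering $\mathcal{I}_{j^*}$ from the left, the same lemma with $\lambda\in\sigma(C_{j^*})$ forces that row to hold. So $\be$ is exactly an eigenvector of $D_{j^*}$, with no perturbation argument needed, and the leading term of $u(x_{b_{j^*}+1})$ vanishes. A separate right-to-left propagation from $x_{2N}$, matched at $x_{b_{j^*}+1}$, fixes the relative scaling and gives the $\mathcal{O}(\delta^{\gamma_i/2})$ bounds on the right.

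A minor point: in a resonant resonator $u\approx u(x_i)\cos(k_0(x-x_i))$, not a constant. This is harmless, since it is $\mathcal{O}(\delta^{1/2})$.
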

For a schematic understanding of Theorem \ref{thm: eigenmode simple case}, see Figure \ref{fig:target interval}; a numerical verification is provided in Figure \ref{fig:eigenmodesimple}.
\begin{figure}[!htb]
    \centering
    \begin{adjustbox}{width=\textwidth}
    \begin{tikzpicture}
        \coordinate (x1) at (1,0);
        \path (x1) +(1,0) coordinate (x2);
        \path (x2) +(1,0) coordinate (x3);
        \path (x3) +(0.5,0) coordinate (dots1);
        \path (dots1) +(0.6,0) coordinate (dots);
        \path (dots) +(0.6,0) coordinate (dots2);
        \path (dots2) +(0.5,0) coordinate (xa1);
        \path (xa1) +(1.5,0) coordinate (xa2);
        \path (xa2) +(1.5,0) coordinate (xa3);
        \path (xa3) +(1.5,0) coordinate (xa4);
        \path (xa4) +(1.5,0) coordinate (xa5);
        \path (xa5) +(0.5,0) coordinate (dots3);
        \path (dots3) +(0.6,0) coordinate (dots4);
        \path (dots4) +(0.6,0) coordinate (dots5);
        \path (dots5) +(0.5,0) coordinate (xb1);
        \path (xb1) +(1.5,0) coordinate (xb2);
        \path (xb2) +(1.5,0) coordinate (xb3);
        \path (xb3) +(1.5,0) coordinate (xb4);
        \path (xb4) +(0.5,0) coordinate (dots6);
        \path (dots6) +(0.6,0) coordinate (dots7);
        \path (dots7) +(0.6,0) coordinate (dots8);
        \path (dots8) +(0.5,0) coordinate (xc1);
        \path (xc1) +(1.2,0) coordinate (xc2);
        \path (xc2) +(1.2,0) coordinate (xc3);

        \def\barHeight{1}   
        \fill[green!20] (xa1 |- 0,-\barHeight) rectangle (xa2 |- 0,\barHeight);
        \fill[green!60] (xa2 |- 0,-\barHeight) rectangle (xb3 |- 0,\barHeight);
        \fill[green!20] (xb3 |- 0,-\barHeight) rectangle (xb4 |- 0,\barHeight);

        \draw[ultra thick] (x1) -- (x2);
        \draw[dashed, thick] (x2) -- (x3);
        \node at (dots) {$\cdots$};
        \draw[ultra thick] (xa1) -- (xa2);
        \draw[dashed, thick] (xa2) -- (xa3);
        \draw[ultra thick] (xa3) -- (xa4);
        \draw[dashed, thick] (xa4) -- (xa5);
        \node at (dots4) {$\cdots$};
        \draw[ultra thick] (xb1) -- (xb2);
        \draw[dashed, thick] (xb2) -- (xb3);
        \draw[ultra thick] (xb3) -- (xb4);
        \node at (dots7) {$\cdots$};
        \draw[ultra thick] (xc1) -- (xc2);
        \draw[dashed, thick] (xc2) -- (xc3);

        \foreach \point/\name in {
            x1/$x_1$,
            x2/$x_2$,
            x3/$x_3$,
            xa1/$x_{j_0+1}$,
            xa2/$x_{j_0+2}$,
            xa3/$x_{j_0+3}$,
            xa4/$x_{j_0+4}$,
            xa5/$x_{j_0+5}$,
            xb1/$x_{j_0+2s-1}$,
            xb2/$x_{j_0+2s}$,
            xb3/$x_{j_0+2s+1}$,
            xb4/$x_{j_0+2s+2}$,
            xc1/$x_{2N-2}$,
            xc2/$x_{2N-1}$,
            xc3/$x_{2N}$}
        {
            \draw[dotted, thick] (\point) -- (\point |- 0,-1);       
            \draw[dotted, thick] (\point |- 0,-0.5) -- (\point |- 1,-0.5); 
            \node[below] at (\point |- 1,-1) {\name};                 
        }

        \foreach \point in {x1, xa2, xa3, xa4, xa5, xb1, xb2, xb3, xc3} {
            \draw[dotted, thick] (\point) -- (\point |- 0,1.5);   
        }
        \draw[dotted, thick] (xa1) -- (xa1 |- 0,1.8);
        \draw[dotted, thick] (xb4) -- (xb4 |- 0,1.8);

        \def\arrowY{1.2}   

        \draw[<->, thick] (x1 |- 0,\arrowY) -- (xa2 |- 0,\arrowY)
            node[midway, above, font=\small] {$\mathcal{O}(\delta^{\gamma_i/2})$};
        \draw[<->, thick] (xa3 |- 0,\arrowY) -- (xa4 |- 0,\arrowY)
            node[midway, above, font=\small] {$\mathcal{O}(\delta^{\gamma_i/2})$};
        \draw[<->, thick] (xb1 |- 0,\arrowY) -- (xb2 |- 0,\arrowY)
            node[midway, above, font=\small] {$\mathcal{O}(\delta^{\gamma_i/2})$};
        \draw[<->, thick] (xb3 |- 0,\arrowY) -- (xc3 |- 0,\arrowY)
            node[midway, above, font=\small] {$\mathcal{O}(\delta^{\gamma_i/2})$};

        \draw[<->, thick, red] (xa2 |- 0,\arrowY) -- (xa3 |- 0,\arrowY)
            node[midway, above, font=\normalfont] {trig.};
        \draw[<->, thick, red] (xa4 |- 0,\arrowY) -- (xa5 |- 0,\arrowY)
            node[midway, above, font=\normalfont] {trig.};
        \draw[<->, thick, red] (xb2 |- 0,\arrowY) -- (xb3 |- 0,\arrowY)
            node[midway, above, font=\normalfont] {trig.};

        \def\braceY{2.0}
        \draw[decorate,decoration={brace,amplitude=5pt}, thick] (xa1 |- 0,\braceY) -- (xb4 |- 0,\braceY) 
            node[midway, above, yshift=5pt] {the target resonant interval $\mathcal{I}_{j^*}=\llbracket a_{j^*},b_{j^*}\rrbracket$};

        \begin{scope}[shift={(xa2)}]
            \draw[thick, red, domain=0:1.5, samples=80] plot (\x, {0.9*sin(3*\x*120)});
        \end{scope}
        \begin{scope}[shift={(xa4)}]
            \draw[thick, red, domain=0:1.5, samples=80] plot (\x, {-0.5*sin(2*\x*120)});
        \end{scope}
        \begin{scope}[shift={(xb2)}]
            \draw[thick, red, domain=0:1.5, samples=80] plot (\x, {0.7*sin(4*\x*120)});
        \end{scope}

\node[font=\bfseries] at ($(xa1)!0.5!(xa2)$) [yshift=6pt] {?};
\node[font=\bfseries] at ($(xb3)!0.5!(xb4)$) [yshift=6pt] {?};

    \end{tikzpicture}
    \end{adjustbox}
  \caption{Schematic illustration of Theorem~\ref{thm: eigenmode simple case}. 
To leading order, the eigenmode is approximated by a trigonometric function on 
the spacings within the target block $\mathcal{I}_{j^*}$, with amplitudes 
determined by the eigenvector of $D_{j^*}$, and vanishes elsewhere. The green 
region marks $\mathcal{I}_{j^*}$; lighter green segments at the two ends (marked 
$?$) may or may not belong to $\mathcal{I}_{j^*}$ depending on the parity of the endpoints $a_{j^*}$ and $b_{j^*}$.}
    \label{fig:target interval}
\end{figure}
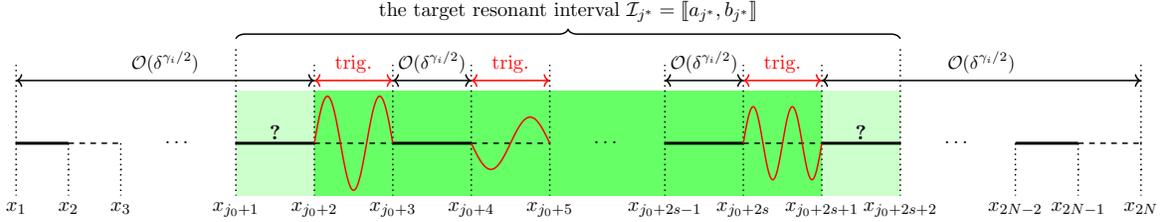
\begin{figure}[!htb] 
\centering
\begin{subfigure}[b]{0.41\textwidth}
    \centering
    \includegraphics[width=\textwidth]{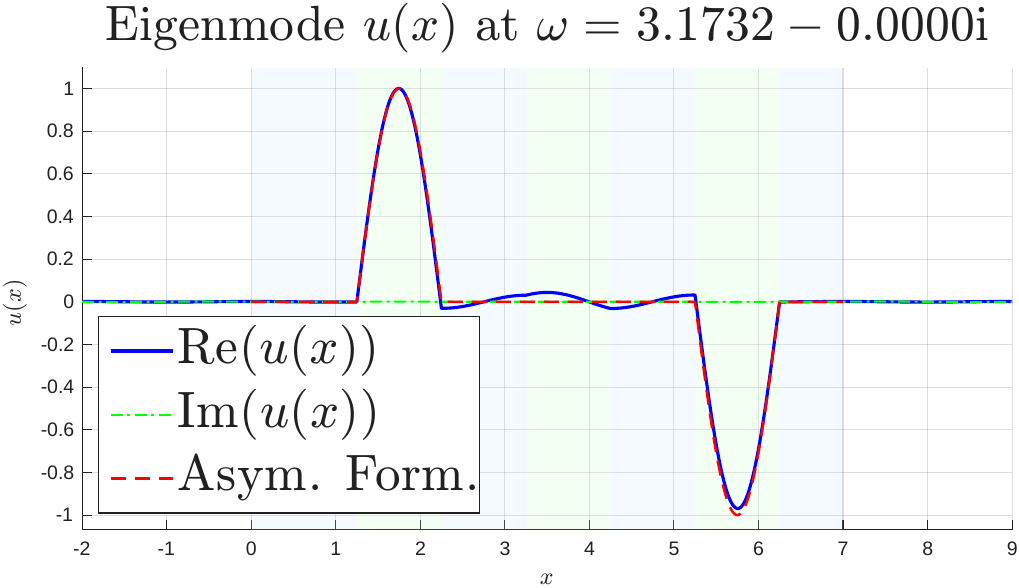}
    \label{fig:mode1}
\end{subfigure}
\vspace{0pt}
\begin{subfigure}[b]{0.41\textwidth}
    \centering
    \includegraphics[width=\textwidth]{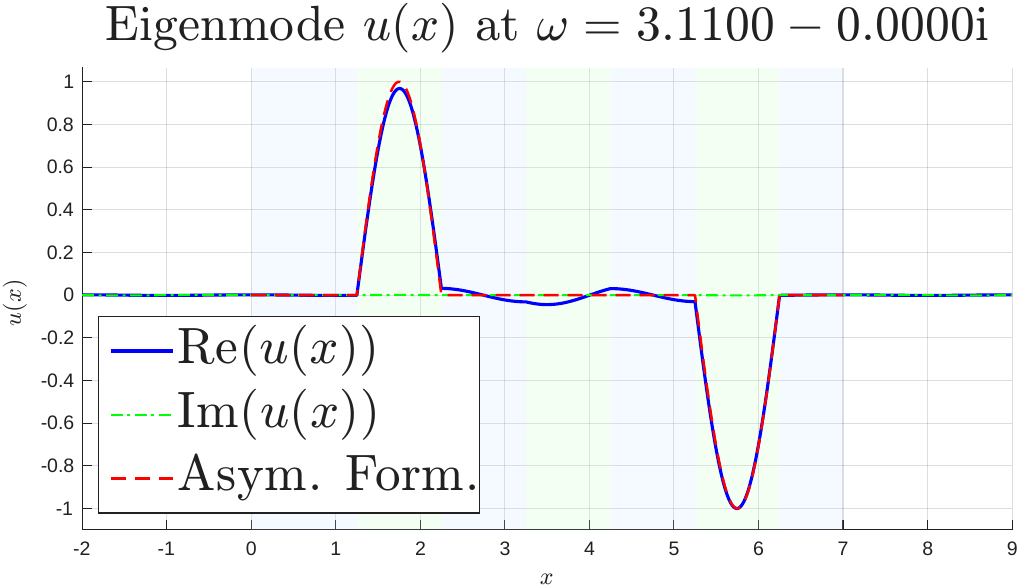}
    \label{fig:mode2}
\end{subfigure}

\begin{subfigure}[b]{0.41\textwidth}
    \centering
    \includegraphics[width=\textwidth]{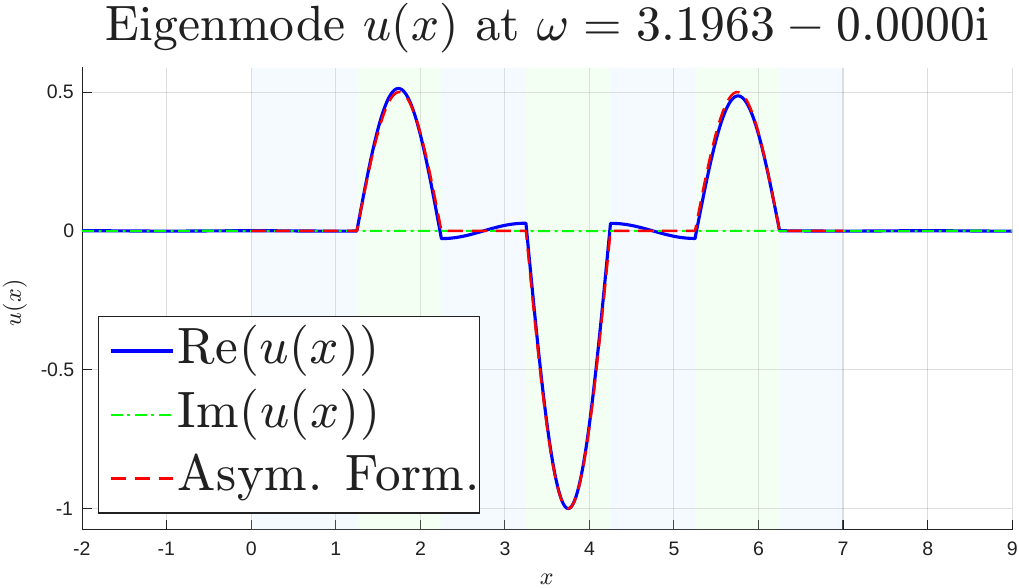}
    \label{fig:mode3}
\end{subfigure}
\vspace{0pt}
\begin{subfigure}[b]{0.41\textwidth}
    \centering
    \includegraphics[width=\textwidth]{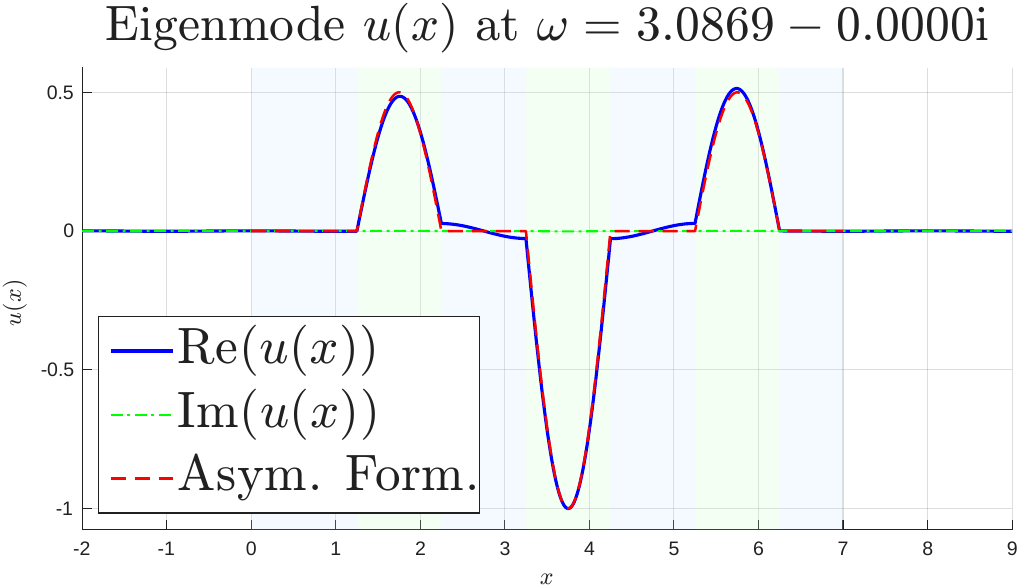}
    \label{fig:mode4} 
\end{subfigure}
\caption{Eigenmodes \(u(x)\) for the four \(k_0v+\mathcal{O}(\delta^{1/2})\) resonant frequencies (parameters: $N=4$, \(\bm t = (1.25\,, 1\,, 1\,, 1\,, 1\,, 1\,, 0.75)^\top\), \(k_0 = \pi\), \(r = 1\), \(v = 1\), \(\delta = 0.001\)), comparing numerical solutions with the asymptotic formula (\ref{equ: u(x) approximate}). The structure of these asymptotic eigenmodes is governed by the matrix \(D_1\), which admits two eigenpairs: \((1, (1, 0, -1)^\top)\) and \((3, (1, -2, 1)^\top)\). Blue and green shaded regions denote the structural lengths and spacings, respectively.}  \label{fig:eigenmodesimple}
\end{figure}

\section{Block partition of the capacitance matrix} \label{sec:3}
In this section, we study the spectral properties of the frequency-dependent capacitance matrix $\mathcal{C}(k_0), k_0 \in E$. In particular, we give a bound on the number of nonzero eigenvalues of $\mathcal{C}(k_0)$ and compute a characteristic polynomial of $\mathcal{C}(k_0)$ (see \Cref{thm: C(k) nonzero eigenvalue}).

\subsection{Preliminaries}\label{sec: block matrices}
Let the parameters $t_1, t_2, \ldots, t_{2N-1}$ and $\theta_1, \theta_2, \ldots, 
\theta_{2N-2}$ be given in \eqref{equ: bm t def} and \eqref{eq:coupling}, respectively. For an integer interval 
$\mathcal{I} = \llbracket a, b \rrbracket \subset \llbracket 1, 2N-1 \rrbracket$, 
let $s$ and $l$ denote the number of even and odd integers in $\mathcal{I}$, 
respectively:
\[
    s = \left\lfloor\frac{b}{2}\right\rfloor - \left\lfloor\frac{a-1}{2}\right\rfloor,
    \qquad
    l = \left\lceil\frac{b}{2}\right\rceil - \left\lceil\frac{a-1}{2}\right\rceil.
\]
Note that $|s - l| \leq 1$. We also define the parities of the endpoints by 
$\xi = a - 2\lfloor a/2 \rfloor$ and $\eta = b - 2\lfloor b/2 \rfloor$, so that 
$\xi = 0$ if $a$ is even and $\xi = 1$ if $a$ is odd, and similarly for $\eta$. 
The $s \times l$ matrix $A$ is then defined by
\begin{align}\label{equ: A def}
    A_{i,\, i-1+\xi} = -1, \qquad A_{i,\, i+\xi} = 1,
\end{align}
with all other entries equal to zero. We introduce
\begin{equation} \label{equ: matrix S T def}
    \begin{aligned} 
        & \mathcal{S} := \operatorname{diag}\!\left\{ 
        t_{2\lceil a/2 \rceil},\, t_{2\lceil a/2 \rceil + 2},\, \ldots,\, 
        t_{2\lfloor b/2 \rfloor}  \right\},\\
        &   \mathcal{L} := \operatorname{diag}\!\left\{ 
        t_{2\lfloor a/2 \rfloor + 1},\, t_{2\lfloor a/2 \rfloor + 3},\, \ldots,\, 
        t_{2\lceil b/2 \rceil - 1} 
    \right\},
    \end{aligned}
\end{equation}
so that $\mathcal{S}$ is an $s \times s$ diagonal matrix with even-indexed entries 
and $\mathcal{L}$ is an $l \times l$ diagonal matrix with odd-indexed entries. 
We then define the $l \times l$ matrix
\[
    C = C^{\xi,\eta}(\theta_a, \ldots, \theta_{b-1}) 
    := \mathcal{L}^{-1} A^\top \mathcal{S}^{-1} A,
\]
which depends only on the coupling coefficients $\theta_a, \ldots, \theta_{b-1}$. 
In the special case $a = 1$, $b = 2N-1$ (so that $(\xi, \eta) = (1,1)$), $C$ reduces for $k_0=0$ to the classical capacitance matrix $\mathcal{C}(0)$ 
introduced in~\cite{feppon.cheng.ea2023Subwavelength}. We will also see in the next subsection that $C$ corresponds to each nonzero block of our frequency-dependent capacitance matrix $\mathcal{C}(k_0)$. The explicit forms of $C^{\xi,\eta}$ for each choice of parities are as follows:

\[
C^{1,1} = \begin{pmatrix}
    \theta_a & -\theta_a & & \\
    -\theta_{a+1} & \theta_{a+1}+\theta_{a+2} & -\theta_{a+2} & \\
    & -\theta_{a+3} & \theta_{a+3}+\theta_{a+4} & -\theta_{a+4} & \\
    & & \ddots & \ddots & \ddots \\
    & & & -\theta_{b-3} & \theta_{b-3}+\theta_{b-2} & -\theta_{b-2} \\
    & & & & -\theta_{b-1} & \theta_{b-1}
\end{pmatrix},
\]
\[
C^{0,1}=\begin{pmatrix}
    \theta_a+\theta_{a+1}&-\theta_{a+1}&\\
    -\theta_{a+2}&\theta_{a+2}+\theta_{a+3}&-\theta_{a+3}&\\
    &-\theta_{a+4}&\theta_{a+4}+\theta_{a+5}&-\theta_{a+5}&\\&&\ddots&\ddots&\ddots&\\
    &&&-\theta_{b-3}&\theta_{b-3}+\theta_{b-2}&-\theta_{b-2}\\&&&&-\theta_{b-1}&\theta_{b-1}
\end{pmatrix},
\]
\[
C^{1,0}=\begin{pmatrix}
    \theta_a&-\theta_a&\\
    -\theta_{a+1}&\theta_{a+1}+\theta_{a+2}&-\theta_{a+2}&\\
    &-\theta_{a+3}&\theta_{a+3}+\theta_{a+4}&-\theta_{a+4}&\\&&\ddots&\ddots&\ddots&\\
    &&&-\theta_{b-4}&\theta_{b-4}+\theta_{b-3}&-\theta_{b-3}\\&&&&-\theta_{b-2}&\theta_{b-2}+\theta_{b-1}
\end{pmatrix},
\]
\[
C^{0,0}=\begin{pmatrix}
    \theta_a+\theta_{a+1}&-\theta_{a+1}&\\
    -\theta_{a+2}&\theta_{a+2}+\theta_{a+3}&-\theta_{a+3}&\\
    &-\theta_{a+4}&\theta_{a+4}+\theta_{a+5}&-\theta_{a+5}&\\&&\ddots&\ddots&\ddots&\\
    &&&-\theta_{b-4}&\theta_{b-4}+\theta_{b-3}&-\theta_{b-3}\\&&&&-\theta_{b-2}&\theta_{b-2}+\theta_{b-1}
\end{pmatrix}.
\]
We also introduce the $s\times s$ matrix $D$ as
\begin{align}\label{equ: matrix D def}
D=D^{\xi,\eta}(\theta_a,\cdots,\theta_{b-1}):=\mathcal{S}^{-1}\cdot A\mathcal{L}^{-1}A^\top.
\end{align}
Then a direct computation yields
\[
D^{\xi,\eta}(\theta_a,\cdots,\theta_{b-1})=C^{1-\xi,1-\eta}(\theta_a,\cdots,\theta_{b-1}).
\]


\begin{lemma}\label{thm: eig of C}
Let $C^{\xi,\eta}$ and $D^{\xi,\eta}$ be defined as above. Let $\mathfrak{n}:=\# I=b-a+1$. For $0\leq  q\leq\lf\mathfrak{n}/2\rf$, define the constants:
\begin{equation*}
    c_{a,b,q}=\sum_{a\leq j_1\prec j_2\prec\cdots\prec j_q\leq b-1}\prod_{i=1}^q\theta_{j_i},\quad q>0.\qquad  c_{a,b,0}=1. 
\end{equation*}
It holds that: 
\begin{enumerate}
    \item[(i)] All eigenvalues of $C$ are distinct. If $(\xi,\eta) = (1,1)$, then $C$ has 
exactly one zero eigenvalue and all remaining eigenvalues are positive; for all 
other choices of $(\xi,\eta)$, every eigenvalue of $C$ is positive.
    \item[(ii)] $C$ and $D$ share the same nonzero eigenvalues, and their common 
    nonzero characteristic polynomial is
    \[
        P(\lambda) = \sum_{q=0}^{\lfloor \mathfrak{n}/2 \rfloor} 
        c_{a,b,q}(-\lambda)^{\lfloor \mathfrak{n}/2 \rfloor - q}.
    \]
    \item[(iii)] Let $\lambda_0$ be a nonzero eigenvalue of 
    $C = C^{\xi,\eta}(\theta_a, \ldots, \theta_{b-1})$ with corresponding 
    eigenvector $\al = (\alpha_1, \ldots, \alpha_l)^\top$. Let
    \[
P_1(\lambda)=\sum_{q=0}^{\lf(\mathfrak{n}-1)/2\rf}c_{a+1,b,q}(-\lambda)^{\lf(\mathfrak{n}-1)/2\rf-q}, \quad  
P_2(\lambda)=\sum_{q=0}^{\lf(\mathfrak{n}-1)/2\rf}c_{a,b-1,q}(-\lambda)^{\lf(\mathfrak{n}-1)/2\rf-q},
    \]
    be the nonzero characteristic polynomials of 
    \begin{equation}\label{equ:tildec1c2}
    \ti{C}_1=C^{1-\xi,\eta}(\theta_{a+1},\cdots,\theta_{b-1}), \quad \ti{C}_2=C^{\xi,1-\eta}(\theta_{a},\cdots,\theta_{b-2}),
    \end{equation}
    respectively. Then
    \[
    \frac{P_1(\lambda_0)}{P'(\lambda_0)}=(-1)^{\xi+\eta}t_a^{2\xi-1}\lambda_0^{\eta(2\xi-1)}\frac{\alpha_1^2}{\al ^\top\mathcal{L}\al},\quad \frac{P_2(\lambda_0)}{P'(\lambda_0)}=(-1)^{\xi+\eta}t_b^{2\eta-1}\lambda_0^{\xi(2\eta-1)}\frac{\alpha_l^2}{\al^\top\mathcal{L}\al};
    \]
    \item[(iv)] Let $\mu_1 \mu_2 = \lambda > 0$, $\al \in \mathbb{R}^l$, and 
    $\be \in \mathbb{R}^s$. Consider the coupled systems
    \[
        \mathrm{(1)}\ A\al = \mu_1 \mathcal{S}\be, \qquad 
        \mathrm{(2)}\ A^\top \be = \mu_2 \mathcal{L}\al.
    \]
    If all equations in \textnormal{(1)} and \textnormal{(2)} are satisfied except 
    possibly one specific row, which may be the first or last row of either 
    \textnormal{(1)} or \textnormal{(2)}, then this exceptional row holds if and 
    only if $\lambda$ is an eigenvalue of $C$ (or equivalently, of $D$). In that 
    case, $(\lambda, \al)$ is an eigenpair of $C$ and $(\lambda, \be)$ is an 
    eigenpair of $D$. 
\end{enumerate}

\end{lemma}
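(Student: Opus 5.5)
The plan is to exploit the factorization $C=\mathcal{L}^{-1}A^\top\mathcal{S}^{-1}A$ and $D=\mathcal{S}^{-1}A\mathcal{L}^{-1}A^\top$. Symmetrizing with the positive diagonal matrices gives
\[
\mathcal{L}^{1/2}C\mathcal{L}^{-1/2}=(\mathcal{S}^{-1/2}A\mathcal{L}^{-1/2})^\top(\mathcal{S}^{-1/2}A\mathcal{L}^{-1/2})=:M^\top M,
\]
and similarly $D\sim MM^\top$. Hence both matrices are diagonalizable with nonnegative eigenvalues, and they share the same nonzero eigenvalues with equal multiplicities. This is the first half of (ii).

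For (i), I would compute $\ker A$ directly from \eqref{equ: A def}. The rows of $A$ are differences of consecutive entries, so $A\al=0$ forces $\al$ to be constant. If $\xi=0$, the first row of $A$ reads $A_{1,1}=1$ with no $-1$ partner, and this forces $\alpha_1=0$; the same happens at the end when $\eta=0$. So $\ker A$ is one-dimensional exactly when $(\xi,\eta)=(1,1)$, and trivial otherwise.

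Distinctness comes from the tridiagonal structure. $C$ is tridiagonal with nonzero off-diagonal entries $-\theta_j$, and is similar to the symmetric Jacobi matrix $M^\top M$ with nonzero off-diagonals. Such a matrix has distinct eigenvalues, because an eigenvector is determined by its first entry through the three-term recurrence.

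For the characteristic polynomial in (ii), I would expand $\det(C-\lambda)$, or equivalently $\det(\lambda I-MM^\top)$, by the three-term recurrence. An alternative is Cauchy–Binet applied to the principal minors of $M^\top M$: the coefficient of $(-\lambda)^{l-q}$ is a sum of $q\times q$ principal minors, and each equals a sum of squared $q\times q$ minors of $M$. Since $A$ is bidiagonal, its nonzero square minors correspond to choosing $q$ pairwise non-adjacent edges $j_1\prec\cdots\prec j_q$ in $\llbracket a,b-1\rrbracket$. Each such minor squared produces $\prod\theta_{j_i}$, with $\theta_j=1/(t_jt_{j+1})$ arising from the $\mathcal{S}^{-1}$ and $\mathcal{L}^{-1}$ weights. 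Careful bookkeeping of the degree $\lfloor\mathfrak{n}/2\rfloor$, using $\max(s,l)-\ker$ in the four parity cases, gives $P(\lambda)$. As a cross-check, the recurrence $c_{a,b,q}=c_{a+1,b,q}+\theta_a c_{a+2,b,q-1}$ should match the cofactor expansion along the first row.

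For (iii), I would use the classical formula for Jacobi matrices. For a simple eigenvalue $\lambda_0$ of a symmetric tridiagonal $J$ with normalized eigenvector $u$, one has $\det(\lambda-\tilde J)/\det(\lambda-J)$ with residue $u_1^2$ at $\lambda_0$, where $\tilde J$ deletes the first row and column. Apply this to $J=\mathcal{L}^{1/2}C\mathcal{L}^{-1/2}$ with $u=\mathcal{L}^{1/2}\al/\|\mathcal{L}^{1/2}\al\|$, giving $u_1^2=t_a\alpha_1^2/(\al^\top\mathcal{L}\al)$ when $\xi=1$. Deleting the first index of $J$ must then be identified with $\ti C_1$ via the factorization. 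When $\xi=1$, removing the first odd index leaves $C^{0,\eta}(\theta_{a+1},\dots)$. When $\xi=0$, I would instead pass through $D$, interchanging the roles of $\mathcal{S}$ and $\mathcal{L}$; this produces the factors $t_a^{2\xi-1}$ and $\lambda_0^{\eta(2\xi-1)}$. The latter factor arises because the nonzero characteristic polynomials differ from the full ones by a power of $\lambda$ when a zero eigenvalue is present. This sign, power and degree bookkeeping across the four parity cases is the main obstacle, and I would verify each case on small $\mathfrak{n}$. The formula for $P_2$ follows by the reflection $j\mapsto a+b-j$.

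For (iv), I would eliminate one unknown. Suppose all rows hold except the first row of (1). Then (2) gives $\al=\mu_2^{-1}\mathcal{L}^{-1}A^\top\be$, and the remaining rows of (1) give $(D\be)_i=\lambda\beta_i$ for $i\ge2$. The vector $D\be-\lambda\be$ is therefore a multiple of $e_1$, say $ce_1$. If $\lambda\in\sigma(D)$, take the left eigenvector $w$; it has $w_1\neq0$ by the tridiagonal recurrence, so $c\,w_1=w^\top(D-\lambda)\be=0$ and hence $c=0$. Conversely, if $c=0$ then $\be$ is an eigenvector. Here $\be\neq0$, since otherwise $\al=0$ as well. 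The other exceptional rows are handled symmetrically, using $C$ for rows of (2) and the last index for last rows.
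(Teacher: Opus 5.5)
Your proposal is correct. Parts (i) and (iv) match the paper in substance: symmetrize by $\mathcal L^{1/2}$, note that $\ker A$ is nontrivial only for $(\xi,\eta)=(1,1)$, and get simplicity from irreducible tridiagonality. In (iv), your left-eigenvector argument ($w_1\neq 0$) is the dual of the paper's observation that the first $l-1$ rows of $C-\lambda I$ are linearly independent. You are also slightly more careful in recording that $\be\neq 0$.

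The genuine differences are in (ii) and (iii).

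\textbf{Part (ii).} The paper quotes \cite{pm1} for the $(1,1)$ case, does a first-column cofactor expansion for $(0,1)$, and omits the rest. Your Cauchy--Binet argument handles all four parities at once. Square minors of the path-incidence matrix $A$ are $0$ or $\pm1$, and they are nonzero exactly for sets of pairwise disjoint edges, since a perfect matching in a path is unique. This explains combinatorially why $c_{a,b,q}$ appears, and the degree is $\lfloor\mathfrak n/2\rfloor=\min(s,l)=\operatorname{rank}A$.

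\textbf{Part (iii).} The paper uses $\operatorname{adj}(C-\lambda_0 I)=c\,\al(\mathcal L\al)^\top$ together with $\frac{d}{d\lambda}\det(C-\lambda I)=-\operatorname{tr}\operatorname{adj}(C-\lambda I)$, which is exactly your resolvent-residue identity. For $\xi=0$ it writes $\tilde C_1=C-\theta_aE_{11}$, which has the same size as $C$, and expands $\det(X-\theta_aE_{11})=\det X-\theta_a\operatorname{adj}(X)_{11}$.

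Your switch to $D$ turns every case into deletion of a first index. The truncation of $D=C^{1,1-\eta}$ is $C^{0,1-\eta}(\theta_{a+1},\dots,\theta_{b-1})$, which is the $D$-partner of $\tilde C_1$. The price is translating back to $\al$. With $\be=\lambda_0^{-1/2}\mathcal S^{-1}A\al$ one gets
\[
\be^\top\mathcal S\be=\al^\top\mathcal L\al,\qquad t_a\beta_1^2=\frac{\alpha_1^2}{\lambda_0 t_a}.
\]

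One correction to your heuristic. Along this route, the $\lambda_0^{-1}$ in case $(0,1)$ comes from this conversion, not from a zero eigenvalue: neither $D=C^{1,0}$ nor its truncation $C^{0,0}(\theta_{a+1},\dots)$ is singular. In case $(0,0)$, the conversion's $\lambda_0^{-1}$ cancels the $\lambda_0$ from the zero eigenvalue of $D=C^{1,1}$. With that bookkeeping, all four signs and powers come out as stated.
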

\begin{proof}
As for item (i), the first statement follows from the fact that $C$ is a tridiagonal matrix with 
nonzero off-diagonal elements; see~\cite[Lemma~7.7.1]{parlett1998symmetric}. For the second, we observe that 
$\mathcal{L}^{1/2} C \mathcal{L}^{-1/2} = \mathcal{L}^{-1/2} A^\top 
\mathcal{S}^{-1} A \mathcal{L}^{-1/2}$ is symmetric and positive semidefinite, 
since
\[
    \bm{x}^\top \mathcal{L}^{-1/2} A^\top \mathcal{S}^{-1} A \mathcal{L}^{-1/2} 
    \bm{x} \geq 0
\]
for all $\bm{x}$, with equality if and only if $A \mathcal{L}^{-1/2} \bm{x} = 
\bm{0}$. Since $\operatorname{rank}(A \mathcal{L}^{-1/2}) = \operatorname{rank}(A) 
= \min\{s, l\}$, the equation $A \mathcal{L}^{-1/2} \bm{x} = \bm{0}$ admits a 
nontrivial solution only when $s < l$, that is, when $(\xi, \eta) = (1,1)$, which 
gives the second statement. 

For item (ii), $C=\mathcal{L}^{-1}A^\top\cdot \mathcal{S}^{-1}A$ and  $D=\mathcal{S}^{-1}A\cdot\mathcal{L}^{-1}A^\top$
are products of the same two matrices in reverse order, so they share the same nonzero eigenvalues. 
For the characteristic polynomial, when $(\xi,\eta) = (1,1)$, \cite[Lemma~4.3(2)]{pm1} gives
\[
    |C - \lambda I| = \sum_{q=0}^{(\mathfrak{n}-1)/2} 
    c_{a,b,q}(-\lambda)^{(\mathfrak{n}+1)/2 - q},
\]
and the result follows since $C$ has exactly one zero eigenvalue. When 
$(\xi,\eta) = (0,1)$, let $Q^1$ denote the $(1,1)$-cofactor of $|C - \lambda I|$. 
Expanding along the first column yields 
\begin{align*}
&|C-\lambda I|=\sum_{q=0}^{\mathfrak{n}/2-1}c_{a,b,q}(-\lambda)^{\mathfrak{n}/2-q}+\theta_a Q^1(\lambda)
\\=&\sum_{q=0}^{\mathfrak{n}/2-1}(-\lambda)^{\mathfrak{n}/2-q}\sum_{a\leq j_1\prec j_2\prec\cdots\prec j_q\leq b-1}\prod_{i=1}^q\theta_{j_i}+\theta_a \sum_{q=1}^{\mathfrak{n}/2}(-\lambda)^{\mathfrak{n}/2-q}\sum_{a+1\leq j_1\prec j_2\prec\cdots\prec j_{q-1}\leq b-1}\prod_{i=1}^{q-1}\theta_{j_i}\\
=&\sum_{q=0}^{\mathfrak{n}/2}(-\lambda)^{\mathfrak{n}/2-q}\sum_{a\leq j_1\prec j_2\prec\cdots\prec b-1}\prod_{i=1}^q\theta_{j_i}=\sum_{q=0}^{\mathfrak{n}/2}c_{a,b,q}(-\lambda)^{\mathfrak{n}/2-q},
\end{align*}
and the result follows since $C$ has no zero eigenvalues in this case. The 
remaining cases $(\xi,\eta) = (1,0)$ and $(\xi,\eta) = (0,0)$ are analogous and 
omitted.

For item~(iii), since $\lambda_0$ is simple, $C - \lambda_0 I$ has rank $l-1$, so 
$\operatorname{adj}(C - \lambda_0 I)$ has rank $1$. Hence there exists $c \neq 0$ 
such that $\operatorname{adj}(C - \lambda_0 I) = c \al\widetilde{\al}$, 
where $\al$ and $\widetilde{\al}$ are the right and left eigenvectors of $C$, 
respectively. Since $\mathcal{L}^{1/2} C \mathcal{L}^{-1/2}$ is symmetric, the left 
eigenvector satisfies $\widetilde{\al} = \mathcal{L}\al$, and therefore
\[
    \left.\frac{\mathrm{d}}{\mathrm{d}\lambda}\right|_{\lambda=\lambda_0} 
    |C - \lambda I| 
    = -\operatorname{tr}\!\left(\operatorname{adj}(C - \lambda_0 I)\right) 
    = -c\, \al^\top \mathcal{L}\al.
\]
We now compute $P_1(\lambda_0)/P'(\lambda_0)$ for each parity case.

\medskip
\noindent\textit{Case $\xi = 1$.} Here $\widetilde{C}_1$ has no zero eigenvalues, so
\[
    P_1(\lambda_0) = |\widetilde{C}_1 - \lambda_0 I| 
    = \operatorname{adj}(C - \lambda_0 I)_{11} 
    = \beta \mathcal{L}_{11} \alpha_1^2 = c t_a \alpha_1^2.
\]
If $(\xi,\eta) = (1,1)$, then $P'(\lambda_0) = (c/\lambda_0)\,\al^\top 
\mathcal{L}\al$, giving
\[
    \frac{P_1(\lambda_0)}{P'(\lambda_0)} = \lambda_0 t_a 
    \frac{\alpha_1^2}{\al^\top \mathcal{L}\al}.
\]
If $(\xi,\eta) = (1,0)$, $C$ has no zero eigenvalues, and thus the coefficient becomes $-t_a$.

\medskip
\noindent\textit{Case $\xi = 0$.} Here $C$ has no zero eigenvalues. Letting $E_{11}$ 
denote the elementary matrix with a $1$ in position $(1,1)$ and zeros elsewhere,
\[
    |\widetilde{C}_1 - \lambda_0 I| 
    = |C - \lambda_0 I - \theta_a E_{11}| 
    = |C - \lambda_0 I| - \theta_a \operatorname{adj}(C - \lambda_0 I)_{11} 
    = -c \theta_a t_{a+1} \alpha_1^2 
    = -c \frac{\alpha_1^2}{t_a}.
\]
If $(\xi,\eta) = (0,1)$, then $\widetilde{C}_1$ has a zero eigenvalue, so
\[
    P_1(\lambda_0) = \frac{|\widetilde{C}_1 - \lambda_0 I|}{-\lambda_0} 
    = \frac{c \alpha_1^2}{\lambda_0 t_a}, \qquad
    \frac{P_1(\lambda_0)}{P'(\lambda_0)} 
    = -\frac{1}{\lambda_0 t_a}\frac{\alpha_1^2}{\al^\top \mathcal{L}\al}.
\]
If $(\xi,\eta) = (0,0)$, then $\widetilde{C}_1$ has no zero eigenvalue and the 
coefficient becomes $1/t_a$. The formula for $P_2(\lambda_0)/P'(\lambda_0)$ follows 
by an analogous argument.

As for item (iv), we only consider the case where the exceptional row is the last row of (2); the other cases are similar. From (1) we have  
\begin{align}\label{equ: b relation}
\be = \mu_1^{-1} \mathcal{S}^{-1} A \al.
\end{align}
Substituting into the first \(l-1\) rows of (2) gives, for \(i=1,\dots,l-1\),  
\[
(A^\top \be)_i = \mu_2 (\mathcal{L}\al)_i \Longrightarrow (A^\top\mathcal{S}^{-1}A\al)_i = \lambda (\mathcal{L}\al)_i,
\]  
and multiplying by \(\mathcal{L}^{-1}\) yields  
\begin{align}\label{equ: Ca=lam a}
(C\al)_i = \lambda \al_i,\quad i=1,\cdots,l-1.
\end{align}
The last row of (2) is \((A^\top\be)_l = \mu_2 (\mathcal{L}\al)_l\), which by (\ref{equ: b relation}) is equivalent to  
\begin{align}\label{Ca l=lam a l}
(C\al)_l = \lambda \al_l.
\end{align}
If the last row holds, then (\ref{Ca l=lam a l}) gives \(C\al=\lambda\al\), and from (\ref{equ: b relation}) we obtain  
\[
D\be = \mathcal{S}^{-1}A\mathcal{L}^{-1}A^\top\be = \mathcal{S}^{-1}A\mathcal{L}^{-1}(\mu_2\mathcal{L}\al) = \mu_2\mathcal{S}^{-1}A\al = \mu_2\mathcal{S}^{-1}(\mu_1\mathcal{S}\be) = \lambda\be.
\]

Conversely, assume that \(\lambda\) is an eigenvalue of \(C\). Let \(\tilde{C}\) be the submatrix of \(C-\lambda I\) consisting of its first \(l-1\) rows. Then (\ref{equ: Ca=lam a}) implies \(\al\in\operatorname{Ker}\tilde{C}\). Since \(C\) is tridiagonal with nonzero off-diagonals, its first \(l-1\) rows are linearly independent, so \(\operatorname{rank}(\tilde{C}) = \operatorname{rank}(C-\lambda I) = l-1\). The rank--nullity theorem then gives $\dim\operatorname{Ker}\widetilde{C} = 
\dim\operatorname{Ker}(C - \lambda I)$, and since $\operatorname{Ker}(C - \lambda I) 
\subset \operatorname{Ker}\widetilde{C}$, the two kernels coincide. Hence, \(\al\in\operatorname{Ker}(C-\lambda I)\), \emph{i.e.}, \(C\al=\lambda\al\), and (\ref{Ca l=lam a l}) holds, so the last row of (2) is satisfied.
\end{proof}

\begin{remark}
Lemma \ref{thm: eig of C}(iv) can be interpreted as a singular value decomposition of the matrix $K = \mathcal{L}^{-1/2}A^{\top}\mathcal{S}^{-1/2}$. Indeed, if $\lambda > 0$ is an eigenvalue of $C$ (hence of $D$) and $\al,\be$ satisfy the equations in the lemma with $\mu_1=\mu_2:=\mu = \sqrt{\lambda}$, then setting $\bm{x} = \mathcal{L}^{1/2}\al$ and $\bm{y} = \mathcal{S}^{1/2}\be$ yields
\[
K\bm {y} = \mu \bm{x}, \qquad K^{\top}\bm{x} = \mu \bm{y},
\]
so $\mu$ is a singular value of $K$ with right singular vector $\bm y$ and left singular vector $\bm x$. Conversely, any singular value of $K$ gives rise to an eigenpair of $C$ and $D$ via the same transformation.
\end{remark}

\subsection{Block structures of capacitance matrices}
\label{sec: generalized capcitance matrix}
This section elucidates the block structure of capacitance matrix $\mathcal C(k_0)$ and its symmetrized version. We first recall the following standard result from linear algebra.

\begin{lemma}\label{thm: tridiag eig}
Let $X$ and $\widetilde{X}$ be two tridiagonal matrices of the form
\[
X = \begin{pmatrix}
a_1 & c_1 & & \\
b_1 & a_2 & c_2 & \\
& \ddots & \ddots & c_{n-1} \\
& & b_{n-1} & a_n
\end{pmatrix}, \qquad
\widetilde{X} = \begin{pmatrix}
a_1 & \tilde{c}_1 & & \\
\tilde{b}_1 & a_2 & \tilde{c}_2 & \\
& \ddots & \ddots & \tilde{c}_{n-1} \\
& & \tilde{b}_{n-1} & a_n
\end{pmatrix}.
\]
If $b_i c_i = \tilde{b}_i \tilde{c}_i$ for every $i = 1, \dots, n-1$, then 
$X$ and $\widetilde{X}$ have the same eigenvalues.
\end{lemma}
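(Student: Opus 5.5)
The natural route is a similarity transformation by a diagonal matrix. If every product $b_i c_i$ is nonzero, set $\widetilde{X} = T X T^{-1}$ with $T = \diag\{d_1,\dots,d_n\}$ and all $d_i \neq 0$. The $(i,i+1)$ entry of $TXT^{-1}$ is $d_i c_i/d_{i+1}$ and the $(i+1,i)$ entry is $d_{i+1} b_i/d_i$; the diagonal entries are unchanged. We need $d_i c_i/d_{i+1} = \tilde c_i$, which forces $d_{i+1}b_i/d_i = b_ic_i/\tilde c_i = \tilde b_i$. So we choose $d_1 = 1$ and $d_{i+1} = d_i c_i/\tilde c_i$ recursively. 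Then $X$ and $\widetilde X$ are similar and have the same eigenvalues with multiplicity. This argument breaks down when some of $c_i, \tilde c_i$ vanish, so the general case needs a different argument.

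The cleaner and fully general argument is to show the characteristic polynomials coincide. Let $p_k(\lambda)$ be the determinant of the leading $k\times k$ principal submatrix of $X - \lambda I$. Expanding along the last row gives the standard three-term recurrence
\[
p_k(\lambda) = (a_k - \lambda)\,p_{k-1}(\lambda) - b_{k-1}c_{k-1}\,p_{k-2}(\lambda), \qquad p_0 = 1,\quad p_{-1}=0.
\]
This recurrence involves the off-diagonal entries only through the products $b_{k-1}c_{k-1}$. The matrix $\widetilde X$ satisfies the same recurrence with the same diagonal $a_k$ and the same products $\tilde b_{k-1}\tilde c_{k-1} = b_{k-1}c_{k-1}$. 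Induction on $k$ then gives $\det(X-\lambda I) = \det(\widetilde X - \lambda I)$. Hence the eigenvalues, counted with algebraic multiplicity, coincide.

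The only step requiring care is justifying the recurrence itself. Expand $\det$ of the $k\times k$ block along its last row, which has entries $b_{k-1}$ and $a_k-\lambda$. The cofactor of $b_{k-1}$ has last column $(0,\dots,0,c_{k-1})^\top$, and expanding it along that column yields $c_{k-1}p_{k-2}$ with sign $-1$ overall. This is routine, so I expect no real obstacle. I would present the determinant argument as the proof, since it covers zero off-diagonal entries, and mention the diagonal similarity as a remark. The determinant argument is what is needed later, where $\mathcal{C}(k_0)$ has vanishing couplings $\theta_j(k_0)$, so that the lemma can be applied to pass to the symmetrized $\mathcal{C}^{\mathrm{sym}}(k_0)$ in proving Theorem~\ref{thm: C_j's}.
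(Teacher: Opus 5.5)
Your proof is correct and is essentially the paper's: both show that the characteristic polynomials of the leading principal submatrices obey the same three-term recurrence, which sees the off-diagonal entries only through the products $b_{k-1}c_{k-1}$, and then conclude by induction. Your choice of this argument over the diagonal similarity is the right one, since the lemma is applied to $\mathcal{C}(k_0)$ versus $\mathcal{C}^{\mathrm{sym}}(k_0)$, where some couplings are one-sided (one of $b_i, c_i$ is zero and the other is not). In that situation the two matrices need not even be similar, yet they still share eigenvalues.
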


\begin{proof}
Consider the characteristic polynomials of the leading principal submatrices. 
For $k = 1, \dots, n$, let $P_k(\lambda)$ and $\widetilde{P}_k(\lambda)$ 
denote the characteristic polynomials of the $k\times k$ leading principal 
submatrices of $X$ and $\widetilde{X}$, respectively. Both satisfy the 
three-term recurrence
\[
P_0(\lambda) = 1, \quad P_1(\lambda) = \lambda - a_1, \quad
P_k(\lambda) = (\lambda - a_k)P_{k-1}(\lambda) 
- b_{k-1}c_{k-1}\,P_{k-2}(\lambda), \quad k \geq 2,
\]
and similarly for $\widetilde{P}_k$ with $b_{k-1}c_{k-1}$ replaced by 
$\tilde{b}_{k-1}\tilde{c}_{k-1}$. Since $b_{k-1}c_{k-1} = 
\tilde{b}_{k-1}\tilde{c}_{k-1}$ for all $k$, a straightforward induction 
gives $P_k(\lambda) = \widetilde{P}_k(\lambda)$ for all $k$. In particular, 
$P_n(\lambda) = \widetilde{P}_n(\lambda)$, so $X$ and $\widetilde{X}$ have 
the same characteristic polynomial and hence the same eigenvalues.
\end{proof}
For $k_0 \in E$, we introduce the symmetrized version of the frequency-dependent 
capacitance matrix $\mathcal{C}(k_0)$ defined in~\eqref{eq:mathcalCk_def} by
\begin{align*}
    \left(\mathcal{C}^{\mathrm{sym}}(k_0)\right)_{i,i} 
    &:= \left(\mathcal{C}(k_0)\right)_{i,i}, \\
    \left(\mathcal{C}^{\mathrm{sym}}(k_0)\right)_{i,i+1} 
    = \left(\mathcal{C}^{\mathrm{sym}}(k_0)\right)_{i+1,i} 
    &:= -\sqrt{\left(\mathcal{C}(k_0)\right)_{i,i+1}
               \left(\mathcal{C}(k_0)\right)_{i+1,i}},
\end{align*}
with all remaining entries equal to zero. By Lemma~\ref{thm: tridiag eig}, 
$\mathcal{C}(k_0)$ and $\mathcal{C}^{\mathrm{sym}}(k_0)$ share the same eigenvalues.

For an integer interval $\mathcal{I} = \llbracket a, b \rrbracket$, we define
\[
    \mathrm{Sta}(\mathcal{I}) = \left\lfloor\frac{a}{2}\right\rfloor + 1, \qquad 
    \mathrm{End}(\mathcal{I}) = \left\lceil\frac{b}{2}\right\rceil.
\]
Let $C_j$ (resp.\ $C_j^{\mathrm{sym}}$) denote the principal submatrix of 
$\mathcal{C}(k_0)$ (resp.\ $\mathcal{C}^{\mathrm{sym}}(k_0)$) with rows and columns indexed from $\mathrm{Sta}(\mathcal{I}_j)$ to $\mathrm{End}(\mathcal{I}_j)$, where $\{\mathcal{I}_j\}$ are defined in \eqref{equ:partition}. Similarly, $C_j$ and $C_j^{\mathrm{sym}}$ share the same eigenvalues. The following proposition, illustrated in Figure~\ref{fig:compare}, 
describes the block-diagonal structure of $\mathcal{C}^{\mathrm{sym}}(k_0)$. Theorem \ref{thm: C_j's} follows as a corollary of Proposition \ref{thm: C_j^sym's}, and will be proved accordingly.

\begin{proposition}[(Block-diagonal structure)]\label{thm: C_j^sym's}
$\mathrm{End}(\mathcal{I}_j) < \mathrm{Sta}(\mathcal{I}_{j+1})$ for $j = 1, 
    \ldots, p-1$. Consequently, the submatrices $\{C_j^{\mathrm{sym}}\}_{j=1}^p$ 
    are disjoint principal submatrices of $\mathcal{C}^{\mathrm{sym}}(k_0)$, with 
    all entries outside these blocks equal to zero. Hence, the nonzero eigenvalues 
    of $\mathcal{C}^{\mathrm{sym}}(k_0)$ coincide with the union of the nonzero eigenvalues of the $C_j^{\mathrm{sym}}$'s, counted with their multiplicities. 
\end{proposition}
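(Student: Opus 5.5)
The plan is to read off which entries of $\mathcal{C}^{\mathrm{sym}}(k_0)$ can be nonzero, directly from the definition of $\theta_j(k_0)$. By \eqref{equ: t_j(k) def} and \eqref{eq:coupling}, $\theta_j(k_0)\neq 0$ exactly when both $j$ and $j+1$ lie in $I$, that is, when $j,j+1$ sit in a common interval $\mathcal{I}_i$. In $\mathcal{C}(k_0)$, row $r$ contains the couplings $\theta_{2r-2}(k_0)$ and $\theta_{2r-1}(k_0)$. The coupling $\theta_{2r-1}$ appears at positions $(r,r)$ and $(r,r+1)$, and $\theta_{2r}$ appears at positions $(r+1,r+1)$ and $(r+1,r)$. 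After symmetrization, the off-diagonal entry at $(r,r+1)$ is $-\sqrt{\theta_{2r-1}(k_0)\theta_{2r}(k_0)}$. This is nonzero only if $2r-1,2r,2r+1$ all lie in one $\mathcal{I}_i$.

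The first step is the inequality $\mathrm{End}(\mathcal{I}_j)<\mathrm{Sta}(\mathcal{I}_{j+1})$. Since $a_{j+1}\ge b_j+2$, we get
\[
\lfloor a_{j+1}/2\rfloor+1\ \ge\ \lfloor (b_j+2)/2\rfloor+1\ =\ \lfloor b_j/2\rfloor+2\ >\ \lceil b_j/2\rceil ,
\]
which is the claim.

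The second step checks that every nonzero entry lies inside some block. For a diagonal entry $(r,r)$, take a nonzero summand $\theta_{2r-2}$ (respectively $\theta_{2r-1}$). Then $2r-2,2r-1\in\mathcal{I}_i$ (respectively $2r-1,2r\in\mathcal{I}_i$) for some $i$, so $a_i\le 2r-2$ or $a_i\le 2r-1$, and $b_i\ge 2r-1$. This gives $\lfloor a_i/2\rfloor+1\le r\le\lceil b_i/2\rceil$. For an off-diagonal entry $(r,r+1)$, we have $a_i\le 2r-1$ and $b_i\ge 2r+1$. Hence both $r$ and $r+1$ lie in $[\mathrm{Sta}(\mathcal{I}_i),\mathrm{End}(\mathcal{I}_i)]$.

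From these two steps, $\mathcal{C}^{\mathrm{sym}}(k_0)$ is permutation-free block diagonal. Its blocks are $C^{\mathrm{sym}}_1,\dots,C^{\mathrm{sym}}_p$ together with a zero block on the complementary indices. The characteristic polynomial therefore factors as $(-\lambda)^{N-\sum_j \dim C_j}\prod_j\det(C^{\mathrm{sym}}_j-\lambda I)$, which gives the statement about nonzero eigenvalues with multiplicity. Theorem \ref{thm: C_j's} then follows from Lemma~\ref{thm: tridiag eig}, which says $\mathcal{C}(k_0)$ and $\mathcal{C}^{\mathrm{sym}}(k_0)$ share eigenvalues, and likewise $C_j$ and $C_j^{\mathrm{sym}}$.

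The main obstacle is careful index bookkeeping of the parity at the block endpoints. The unsymmetrized $\mathcal{C}(k_0)$ can have a nonzero entry just outside a block, namely the gray squares in Figure~\ref{fig:interval_matrix}. This happens when $\theta_{2r-1}\neq0$ but $\theta_{2r}=0$ (or the reverse). It is exactly this situation that the symmetrization kills, since the product $\theta_{2r-1}\theta_{2r}$ vanishes. I would verify separately the four parity cases of $(a_i,b_i)$ to make sure no boundary entry is missed.
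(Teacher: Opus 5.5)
Your proposal is correct and follows essentially the paper's argument. You use the same floor/ceiling computation for $\mathrm{End}(\mathcal{I}_j)<\mathrm{Sta}(\mathcal{I}_{j+1})$, and the same fact for the zero pattern: $\theta_i(k_0)\neq 0$ only when $i,i+1$ lie in a common $\mathcal{I}_j$, so a symmetrized off-diagonal entry needs three consecutive resonant indices. You argue contrapositively, showing every nonzero entry lies inside some block, whereas the paper checks that the gap entries between consecutive blocks vanish. Your version also covers the rows before $\mathrm{Sta}(\mathcal{I}_1)$ and after $\mathrm{End}(\mathcal{I}_p)$, which the paper leaves implicit, and your bounds already hold for every parity, so the four-case parity check you planned is unnecessary.
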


\begin{proof}[Proof of Proposition \ref{thm: C_j^sym's} and Theorem \ref{thm: C_j's}]
Since $a_{j+1} - b_j \geq 2$, we have
\begin{align*}
    \mathrm{Sta}(\mathcal{I}_{j+1}) - \mathrm{End}(\mathcal{I}_j) 
    &= \left\lfloor\frac{a_{j+1}}{2}\right\rfloor - \left\lceil\frac{b_j}{2}\right\rceil + 1 \\
    &\geq \left\lfloor\frac{b_j + 2}{2}\right\rfloor - \left\lceil\frac{b_j}{2}\right\rceil + 1 
    = \left\lfloor\frac{b_j}{2}\right\rfloor - \left\lceil\frac{b_j}{2}\right\rceil + 2 \geq 1.
\end{align*}
It remains to verify that
\begin{align*}
    \left(\mathcal{C}^{\mathrm{sym}}(k_0)\right)_{i,i} &= 0, 
    \qquad \mathrm{End}(\mathcal{I}_j) + 1 \leq i \leq \mathrm{Sta}(\mathcal{I}_{j+1}) - 1, \\
    \left(\mathcal{C}^{\mathrm{sym}}(k_0)\right)_{i-1,i} &= 0, 
    \qquad \mathrm{End}(\mathcal{I}_j) + 1 \leq i \leq \mathrm{Sta}(\mathcal{I}_{j+1}).
\end{align*}
By definition of $t_i(k_0)$ and $\mathcal{I}_j$, we have $1/t_i(k_0) = 0$ for 
$b_j < i < a_{j+1}$. 

For the diagonal entries, when $\mathrm{End}(\mathcal{I}_j) + 1 \leq i \leq 
\mathrm{Sta}(\mathcal{I}_{j+1}) - 1$, we have
\[
    2i - 1 \in \left[2\left\lceil\frac{b_j}{2}\right\rceil + 1,\, 
    2\left\lfloor\frac{a_{j+1}}{2}\right\rfloor - 1\right] \subset (b_j, a_{j+1}),
\]
so $1/t_{2i-1}(k_0) = 0$ and thus $\left(\mathcal{C}^{\mathrm{sym}}(k_0)\right)_{i,i} = 0$.

For the off-diagonal entries, when $\mathrm{End}(\mathcal{I}_j) + 1 \leq i \leq 
\mathrm{Sta}(\mathcal{I}_{j+1})$, we have
\[
    2i \in \left[2\left\lceil\frac{b_j}{2}\right\rceil + 2,\, 
    2\left\lfloor\frac{a_{j+1}}{2}\right\rfloor + 2\right] \subset (b_j + 1, a_{j+1} + 3),
\]
so at least one of $2i-3$, $2i-2$, $2i-1$ lies in $(b_j, a_{j+1})$, which forces 
at least one of $1/t_{2i-3}(k_0)$, $1/t_{2i-2}(k_0)$, $1/t_{2i-1}(k_0)$ to 
vanish, giving $\left(\mathcal{C}^{\mathrm{sym}}(k_0)\right)_{i-1,i} = 0$. This finally shows that all entries outside $C_j^{\text{sym}},1\leq j\leq p$ equal to zero and thus the nonzero eigenvalues of $\mathcal{C}^{\text{sym}}(k_0)$ coincide with the union of nonzero eigenvalues of the $C^{\text{sym}}_j$'s counted with their multiplicities. Since $\mathcal{C}(k_0)$ and $\mathcal{C}^{\text{sym}}(k_0)$ share the same eigenvalues, and the same holds for $C_j$ and $C^{\text{sym}}_j$, Theorem \ref{thm: C_j's} follows.
\end{proof}
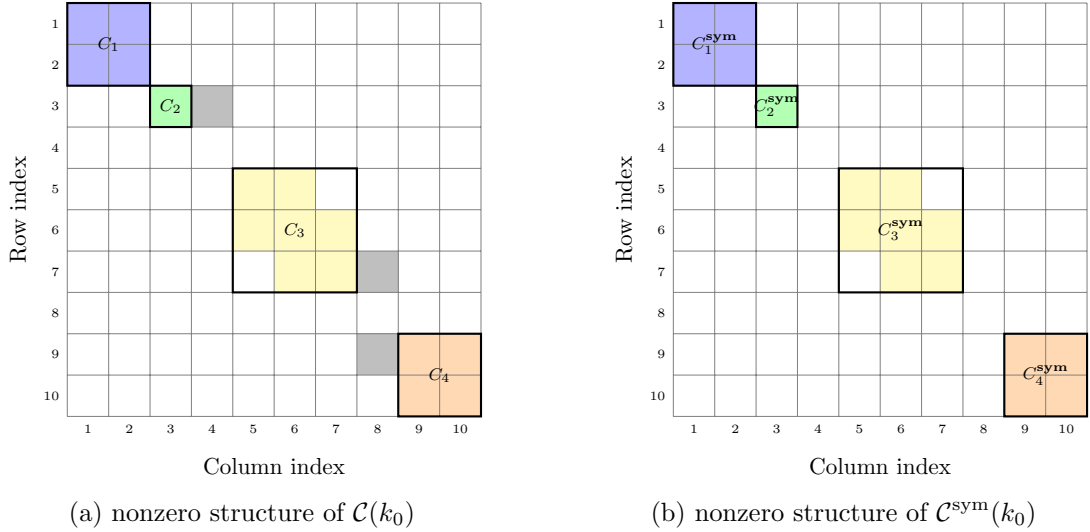
\begin{figure}[!htb]
\centering
\subcaptionbox{nonzero structure of $\mathcal{C}(k_0)$\label{fig:C}}[0.48\textwidth]{%
  \begin{tikzpicture}[scale=0.55, every node/.style={scale=0.8}]
    \colorlet{colorC1}{blue!30}
    \colorlet{colorC2}{green!30}
    \colorlet{colorC3}{yellow!30}
    \colorlet{colorC4}{orange!30}
    \colorlet{colorOther}{gray!50}

    \foreach \r/\c in {1/1,2/2,1/2,2/1} {
        \fill[colorC1] (\c-1,10-\r) rectangle (\c,10-\r+1);
    }
    \fill[colorC2] (2,7) rectangle (3,8);
    \foreach \r/\c in {5/5,6/6,7/7,5/6,6/5,6/7,7/6} {
        \fill[colorC3] (\c-1,10-\r) rectangle (\c,10-\r+1);
    }
    \foreach \r/\c in {9/9,10/10,9/10,10/9} {
        \fill[colorC4] (\c-1,10-\r) rectangle (\c,10-\r+1);
    }
    \foreach \r/\c in {3/4,7/8,9/8} {
        \fill[colorOther] (\c-1,10-\r) rectangle (\c,10-\r+1);
    }

    \draw[step=1cm, gray, very thin] (0,0) grid (10,10);

    \draw[thick] (0,8) rectangle (2,10);
    \draw[thick] (2,7) rectangle (3,8);
    \draw[thick] (4,3) rectangle (7,6);
    \draw[thick] (8,0) rectangle (10,2);

    \node[font=\fontsize{8pt}{12pt}\bfseries] at (1,9) {$C_1$};
    \node[font=\fontsize{8pt}{12pt}\bfseries] at (2.5,7.5) {$C_2$};
    \node[font=\fontsize{8pt}{12pt}\bfseries] at (5.5,4.5) {$C_3$};
    \node[font=\fontsize{8pt}{12pt}\bfseries] at (9,1) {$C_4$};

    \foreach \i in {1,...,10} {
        \node[left, font=\tiny] at (0,10-\i+0.5) {\i};
        \node[below, font=\tiny] at (\i-0.5,0) {\i};
    }

    \node at (5,-1.2) {Column index};
    \node[rotate=90] at (-1.2,5) {Row index};
  \end{tikzpicture}%
}
\hfill
\subcaptionbox{nonzero structure of $\mathcal{C}^{\text{sym}}(k_0)$ \label{fig:Csym}}[0.48\textwidth]{%
  \begin{tikzpicture}[scale=0.55, every node/.style={scale=0.8}]
    \colorlet{colorC1}{blue!30}
    \colorlet{colorC2}{green!30}
    \colorlet{colorC3}{yellow!30}
    \colorlet{colorC4}{orange!30}

    \foreach \r/\c in {1/1,2/2,1/2,2/1} {
        \fill[colorC1] (\c-1,10-\r) rectangle (\c,10-\r+1);
    }
    \fill[colorC2] (2,7) rectangle (3,8);
    \foreach \r/\c in {5/5,6/6,7/7,5/6,6/5,6/7,7/6} {
        \fill[colorC3] (\c-1,10-\r) rectangle (\c,10-\r+1);
    }
    \foreach \r/\c in {9/9,10/10,9/10,10/9} {
        \fill[colorC4] (\c-1,10-\r) rectangle (\c,10-\r+1);
    }

    \draw[step=1cm, gray, very thin] (0,0) grid (10,10);

    \draw[thick] (0,8) rectangle (2,10);
    \draw[thick] (2,7) rectangle (3,8);
    \draw[thick] (4,3) rectangle (7,6);
    \draw[thick] (8,0) rectangle (10,2);

    \node[font=\fontsize{8pt}{12pt}\bfseries] at (1,9) {$C^{\text{sym}}_1$};
    \node[font=\fontsize{8pt}{12pt}\bfseries] at (2.5,7.5) {$C^{\text{sym}}_2$};
    \node[font=\fontsize{8pt}{12pt}\bfseries] at (5.5,4.5) {$C^{\text{sym}}_3$};
    \node[font=\fontsize{8pt}{12pt}\bfseries] at (9,1) {$C^{\text{sym}}_4$};

    \foreach \i in {1,...,10} {
        \node[left, font=\tiny] at (0,10-\i+0.5) {\i};
        \node[below, font=\tiny] at (\i-0.5,0) {\i};
    }

    \node at (5,-1.2) {Column index};
    \node[rotate=90] at (-1.2,5) {Row index};
  \end{tikzpicture}%
}
\caption{An example of the nonzero structure of $\mathcal{C}(k_0)$ (left) and its symmetrized version $\mathcal{C}^{\text{sym}}(k_0)$ (right). Colored squares indicate nonzero entries belonging to the submatrices (different colors distinguish different submatrices), while gray squares represent nonzero entries outside these blocks. In accordance with Proposition \ref{thm: C_j^sym's}, $\mathcal{C}^{\text{sym}}(k_0)$ possesses a block diagonal structure, whereas $\mathcal{C}(k_0)$ does not necessarily.}
\label{fig:compare}
\end{figure}

For each integer interval $\mathcal{I}_j = \llbracket a_j, b_j \rrbracket$, define 
$\xi_j = a_j - 2\lfloor a_j/2 \rfloor$ and $\eta_j = b_j - 2\lfloor b_j/2 \rfloor$. 
In the notation of the preceding subsection,
\begin{align}\label{equ: C_j def}
    C_j = C^{\xi_j,\eta_j}(\theta_{a_j}, \ldots, \theta_{b_j-1}), 
    \qquad j = 1, 2, \ldots, p,
\end{align}
with the decomposition
\begin{align}\label{equ: C_j decomposition}
    C_j = \mathcal{L}_j^{-1} A_j^\top \mathcal{S}_j^{-1} A_j,
\end{align}
where $A_j$, $\mathcal{L}_j$, $\mathcal{S}_j$ are as defined 
in~\eqref{equ: A def} and~\eqref{equ: matrix S T def}. The associated matrix $D_j$ 
is defined by
\begin{align}\label{equ: D_j def}
    D_j = \mathcal{S}_j^{-1} A_j \mathcal{L}_j^{-1} A_j^\top 
    = D^{\xi_j,\eta_j}(\theta_{a_j}, \ldots, \theta_{b_j-1}),
\end{align}
similarly to \eqref{equ: matrix D def}. We now summarize the spectral properties of 
$\mathcal{C}(k_0)$.

\begin{theorem}\label{thm: C(k) nonzero eigenvalue}
Let $m = m(k_0)$ denote the number of nonzero eigenvalues of $\mathcal{C}(k_0)$. 
Then
\[
    m = \sum_{j=1}^p \left\lfloor\frac{n_j}{2}\right\rfloor \leq \left\lfloor\frac{n}{2}\right\rfloor.
\]
Moreover, a nonzero characteristic polynomial of $\mathcal{C}(k_0)$ is
\[
    P_I(\lambda) := \sum_{q=0}^m C_{I,q}(-\lambda)^{m-q},
\]
where
\begin{align}\label{equ: C_I,q def}
    C_{I,q} := \sum_{\substack{0 \leq q_j \leq \lfloor n_j/2 \rfloor \\ q_1 + \cdots + q_p = q}} 
    \prod_{j=1}^p c_{a_j, b_j, q_j} 
    = \sum_{\substack{j_1 \prec j_2 \prec \cdots \prec j_q \\ \{j_i, j_i+1\} \subset I}} 
    \prod_{i=1}^q \theta_{j_i}.
\end{align}
\end{theorem}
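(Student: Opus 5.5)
By Proposition~\ref{thm: C_j^sym's} (equivalently Theorem~\ref{thm: C_j's}), the plan reduces both claims about $\mathcal{C}(k_0)$ to the blocks $C_1,\dots,C_p$. The eigenvalues of $\mathcal{C}(k_0)$ coincide with those of $\mathcal{C}^{\mathrm{sym}}(k_0)$ by Lemma~\ref{thm: tridiag eig}. The latter matrix is block diagonal with diagonal blocks $C_j^{\mathrm{sym}}$ and zero blocks elsewhere, so its characteristic polynomial factors as $(-\lambda)^{z}\prod_{j=1}^p |C_j^{\mathrm{sym}}-\lambda I|$ for some $z\ge 0$. Since $C_j$ and $C_j^{\mathrm{sym}}$ have the same characteristic polynomial, a nonzero characteristic polynomial of $\mathcal{C}(k_0)$ is the product of the nonzero characteristic polynomials of the $C_j$. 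Here ``nonzero characteristic polynomial'' means the characteristic polynomial with all factors of $\lambda$ removed.

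First I identify each block. Using \eqref{equ: C_j def}, $C_j=C^{\xi_j,\eta_j}(\theta_{a_j},\dots,\theta_{b_j-1})$, where now $\theta_i$ means $\theta_i(k_0)=\theta_i$ for $a_j\le i\le b_j-1$, since both $t_i,t_{i+1}$ are resonant there. This is a direct check of entries against the definition \eqref{eq:mathcalCk_def}, using the index range $\mathrm{Sta}(\mathcal I_j)$ to $\mathrm{End}(\mathcal I_j)$. Boundary entries involving $\theta_{a_j-1}(k_0)$ or $\theta_{b_j}(k_0)$ vanish because $t_{a_j-1}$ and $t_{b_j+1}$ are non-resonant (or out of range). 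This parity bookkeeping, that the four shapes $C^{\xi,\eta}$ match exactly, is the main point to verify carefully. One degenerate case needs separate treatment: $n_j=1$ with $a_j$ odd, where $C_j$ is the $1\times1$ zero matrix, consistent with $C^{1,1}$ having a single zero eigenvalue.

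Next I apply Lemma~\ref{thm: eig of C}(ii) with $\mathfrak n=n_j$. It says the nonzero characteristic polynomial of $C_j$ is $P_j(\lambda)=\sum_{q=0}^{\lfloor n_j/2\rfloor}c_{a_j,b_j,q}(-\lambda)^{\lfloor n_j/2\rfloor-q}$. Its constant term $c_{a_j,b_j,\lfloor n_j/2\rfloor}$ is a sum of products of positive $\theta$'s. Moreover, the index set is nonempty: one may choose $j_i=a_j+2(i-1)$, and then $j_q\le b_j-1$. Hence the constant term is positive, so $P_j$ has degree exactly $\lfloor n_j/2\rfloor$ with no zero root. By Lemma~\ref{thm: eig of C}(i), $C_j$ therefore has exactly $\lfloor n_j/2\rfloor$ nonzero eigenvalues. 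Summing over $j$ gives $m=\sum_j\lfloor n_j/2\rfloor\le\lfloor\sum_j n_j/2\rfloor=\lfloor n/2\rfloor$.

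Finally, I multiply to get $P_I(\lambda)=\prod_j P_j(\lambda)$. Expanding and collecting powers $(-\lambda)^{m-q}$ with $q=\sum_j q_j$ gives the first expression for $C_{I,q}$. For the second expression, I use a bijection. A choice $j_1\prec\cdots\prec j_q$ with $\{j_i,j_i+1\}\subset I$ forces each pair $\{j_i,j_i+1\}$ into a single maximal interval $\mathcal I_j$, i.e.\ $a_j\le j_i\le b_j-1$. Grouping the chosen indices by block therefore gives tuples counted by $\prod_j c_{a_j,b_j,q_j}$. Conversely, non-adjacency across different blocks is automatic because $a_{j+1}-b_j\ge2$. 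This identifies the two sums and completes the proof.
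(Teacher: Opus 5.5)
Your proposal is correct and follows the same route as the paper. You reduce to the blocks via Proposition~\ref{thm: C_j^sym's} and Theorem~\ref{thm: C_j's}, apply Lemma~\ref{thm: eig of C}(ii) to each $C_j$ to obtain $P_j$, take $P_I=\prod_j P_j$, and regroup the expansion block by block to identify $C_{I,q}$. Your extra checks are details the paper leaves implicit: the positivity of $c_{a_j,b_j,\lfloor n_j/2\rfloor}$ fixes the count of nonzero eigenvalues of $C_j$ at $\lfloor n_j/2\rfloor$, and the parity bookkeeping confirms that $C_j=C^{\xi_j,\eta_j}$. The one case you do not mention, $n_j=1$ with $a_j$ even, gives an empty block (there $\mathrm{Sta}(\mathcal I_j)>\mathrm{End}(\mathcal I_j)$), which contributes only the factor $1$, so nothing breaks.
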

\begin{proof}
By Lemma~\ref{thm: eig of C}(ii), the nonzero characteristic polynomial of $C_j$ is
\begin{align}\label{equ: P_j def}
    P_j(\lambda) = \sum_{q=0}^{\lfloor n_j/2 \rfloor} 
    c_{a_j,b_j,q}(-\lambda)^{\lfloor n_j/2 \rfloor - q}.
\end{align}
By Theorem~\ref{thm: C_j's}, $P_I(\lambda) := \prod_{j=1}^p P_j(\lambda)$ is then 
a nonzero characteristic polynomial of $\mathcal{C}(k_0)$, and
\[
    m = \sum_{j=1}^p \left\lfloor\frac{n_j}{2}\right\rfloor 
    \leq \left\lfloor\sum_{j=1}^p \frac{n_j}{2}\right\rfloor 
    = \left\lfloor\frac{n}{2}\right\rfloor.
\]
It remains to verify the second equality in~\eqref{equ: C_I,q def}. Expanding the 
product over blocks and grouping by the number of selected indices in each block 
$\mathcal{I}_j$ gives
\begin{align*}
    \sum_{\substack{j_1 \prec \cdots \prec j_q \\ \{j_i, j_i+1\} \subset I}} 
    \prod_{i=1}^q \theta_{j_i}
    &= \sum_{\substack{q_1 + \cdots + q_p = q \\ 0 \leq q_j \leq \lfloor n_j/2 \rfloor}}
    \prod_{l=1}^p 
    \left(\sum_{a_l \leq j^{(l)}_1 \prec \cdots \prec j^{(l)}_{q_l} \leq b_l - 1} 
    \prod_{i=1}^{q_l} \theta_{j^{(l)}_i}\right)
    = \sum_{\substack{q_1 + \cdots + q_p = q \\ 0 \leq q_j \leq \lfloor n_j/2 \rfloor}}
    \prod_{j=1}^p c_{a_j, b_j, q_j},
\end{align*}
which completes the proof.
\end{proof}

\section{Propagation matrix approach and asymptotic expansions} \label{sec:4}

In this section, we employ a propagation matrix approach to characterize the 
scattering resonances of the one-dimensional system. We construct the total transfer 
matrix for the finite chain of resonators and establish that the resonant frequencies 
coincide precisely with the zeros of a specific analytic function. This characterization 
serves as the foundation for the rigorous asymptotic analysis carried out in 
\Cref{sec:5}.

We introduce the matrices
\begin{align}\label{equ:def LR}
    R(z) := \begin{pmatrix}
        \dfrac{1+z}{2} & \dfrac{1-z}{2} \\[6pt]
        \dfrac{1-z}{2} & \dfrac{1+z}{2}
    \end{pmatrix},
    \qquad
    L(z) := \begin{pmatrix}
        \mathrm{e}^{\mathrm{i} z} & 0 \\
        0 & \mathrm{e}^{-\mathrm{i} z}
    \end{pmatrix}.
\end{align}
We write $L_j(k) := L(t_j k)$ for the matrix associated 
with the $j$\textsuperscript{th} component $t_j$ of $\bm{t}$ defined in~\eqref{equ: bm t def}, and 
set
\[
    \sigma = \frac{\delta}{r}.
\]
The following theorem is a consequence of \cite[Theorem~3.1 and Lemma~3.5]{pm1}.

\begin{theorem}\label{thm: resonant frequency f(z;mu) zeros}
Let $0 \neq \omega = kv$. Then $\omega$ is a resonant frequency if and only if $k$ 
is a zero of $f(k;\sigma) := M_{\mathrm{tot}}(k;\sigma)_{2,2}$, where 
$M_{\mathrm{tot}}(k;\sigma)$ is the analytic matrix function 
\begin{multline}\label{equ: M_total def}
    M_{tot}(k; \sigma) :=\frac{(4\sigma)^{N}}{(1+\sigma)^{2N}}  R\l(\frac{1}{\sigma}\r)L_{2N-1}(k)R(\sigma)L_{2N-2}(k)\cdots\\
    L_4(k)R\l(\frac{1}{\sigma}\r)L_3(k)R(\sigma)L_2(k)R\l(\frac{1}{\sigma}\r)L_1(k)R(\sigma).  
\end{multline}
As $\sigma \to 0$, $f(k;\sigma)$ converges uniformly to
\[
    f(k;0) = -(2\mathrm{i})^{2N-1} \prod_{j=1}^{2N-1} \sin(t_j k)
\]
on every compact set $K \Subset \mathbb{C}$. A point $k \in \mathbb{C}$ is a zero 
of $f(\,\cdot\,;0)$ if and only if $k \in E = \cup_{j=1}^{2N-1}(\pi\mathbb{Z}/t_j)$, 
in which case $k$ is a zero of order
\[
    n(k) := \#\left\{ j : t_j k \in \pi\mathbb{Z},\ 1 \leq j \leq 2N-1 \right\}.
\]
In particular, all zeros of $f(\,\cdot\,;0)$ are real, and $0$ is a zero of order $2N-1$.
\end{theorem}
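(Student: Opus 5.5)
The proof has three parts: reduce the resonance condition to a scalar transfer-matrix entry, compute the limit $\sigma\to0$ explicitly, and count zeros of the limit.

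\textbf{Step 1: reduction to a matrix entry.} On each homogeneous segment we write $u$ as a combination of $\mathrm{e}^{\pm\mathrm{i}\cdot}$ with the local wave number: $k$ outside the resonators and $k_b=rk$ inside. The two coefficients are propagated across each segment by $L_j(k)$; this is why the phases are $t_jk$, with $t_j=r\ell_j$ for resonators. At each interface we impose continuity of $u$ and the jump condition $u'|_{\pm}=\delta u'|_{\mp}$. In these coordinates the interface map is a scalar multiple of $R(\sigma)$ or $R(1/\sigma)$, because the impedance ratio is $\delta k/k_b=\delta/r=\sigma$ (or its inverse). We normalise the interface factors as $\frac{2\sigma}{1+\sigma}\cdot\frac{1+\sigma}{2\sigma}$-type scalars, which produces the prefactor $(4\sigma)^N/(1+\sigma)^{2N}$.

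The radiation condition fixes the coefficients on the two unbounded pieces. On the left only the outgoing wave $\mathrm{e}^{-\mathrm{i}kx}$ is present, i.e.\ the coefficient vector is $(0,1)^\top$; on the right the incoming coefficient must vanish. Hence a nontrivial solution exists exactly when $M_{\mathrm{tot}}(k;\sigma)_{2,2}=0$, up to the labelling convention of \cite{pm1}. The restriction $\omega\neq0$ rules out degenerate constant modes. This step is just \cite[Theorem~3.1]{pm1}.

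\textbf{Step 2: the limit $\sigma\to0$.} The prefactor $\sigma^N$ combines with the $N$ factors $R(1/\sigma)$: since $\sigma R(1/\sigma)=\tfrac12\begin{pmatrix}\sigma+1&\sigma-1\\ \sigma-1&\sigma+1\end{pmatrix}$, each factor is entire in $\sigma$. Therefore $M_{\mathrm{tot}}$ is a product of matrices analytic in $(k,\sigma)$, and the convergence as $\sigma\to0$ is locally uniform in $k$. At $\sigma=0$ we get $\sigma R(1/\sigma)\to\tfrac12\begin{pmatrix}1&-1\\-1&1\end{pmatrix}=\tfrac12\bm u\bm u^\top$ and $R(0)=\tfrac12\bm v\bm v^\top$, where $\bm u=(1,-1)^\top$ and $\bm v=(1,1)^\top$; both are rank one. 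The product therefore collapses into a chain of scalar factors. For instance,
\[
\bm u^\top L(z)\bm v=\mathrm{e}^{\mathrm{i}z}-\mathrm{e}^{-\mathrm{i}z}=2\mathrm{i}\sin z,
\]
and likewise $\bm v^\top L(z)\bm u=2\mathrm{i}\sin z$. Tracking the powers of $2$ and the outer vectors $\bm u,\bm v$ then gives $f(k;0)=-(2\mathrm{i})^{2N-1}\prod_j\sin(t_jk)$, as in \cite[Lemma~3.5]{pm1}.

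\textbf{Step 3: zero counting.} Each factor $\sin(t_jk)$ has only real zeros, namely $k\in\pi\mathbb{Z}/t_j$, and each such zero is simple. The order of vanishing of the product at $k$ is therefore the number of indices $j$ with $t_jk\in\pi\mathbb{Z}$, which is $n(k)$. At $k=0$ every index qualifies, giving order $2N-1$.

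The main obstacle is the bookkeeping in Step 2. One must verify that the normalisation constants and the overall sign combine exactly into $-(2\mathrm{i})^{2N-1}$, and that the $(2,2)$ entry is picked out by the outer vectors $\bm u$ (from the left $R(1/\sigma)$) and $\bm v$ (from the right $R(\sigma)$). Here $\bm u^\top\bm e_2=-1$ and $\bm v^\top\bm e_2=1$, and these supply the sign.
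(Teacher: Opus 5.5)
Your proposal is correct and follows the same route as the paper, which proves this statement only by invoking \cite[Theorem~3.1 and Lemma~3.5]{pm1}: a transfer-matrix reduction in which the radiation conditions give the criterion $(M_{\mathrm{tot}})_{2,2}=0$, followed by the $\sigma\to0$ limit. Your explicit rank-one collapse correctly fills in that citation: the constants combine as $4^N\cdot 2^{-2N}=1$, each scalar factor is $2\mathrm{i}\sin(t_jk)$, and $(\bm u\bm v^\top)_{2,2}=-1$ supplies the sign; the zero count in Step~3 then follows immediately.
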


Following the notation of~\cite[Section~3.3]{pm1}, we introduce
\begin{equation}\label{def:matrixrs}
    \nu := \frac{2\sigma}{1+\sigma} = \frac{2\delta}{\delta+r}, \quad
    R := \begin{pmatrix} -1 & 1 \\ -1 & 1 \end{pmatrix}, \quad
    R_\pm := \begin{pmatrix} 1 & \pm1 \\ \pm1 & 1 \end{pmatrix}, \quad
    S := \begin{pmatrix} 0 & -1 \\ 1 & 0 \end{pmatrix}.
\end{equation}
Define
\begin{equation}\label{equ: G(k;delta) expand}
    G(k;\nu) := (R+\nu S)L_{2N-1}(k)(R+\nu S)L_{2N-2}(k)(R+\nu S)
    \cdots L_1(k)(R+\nu S),
\end{equation}
and set $g(k;\nu) := G(k;\nu)_{2,2}$. Then from \cite{pm1} we have  
\begin{equation}\label{equ:g=f}
    \begin{aligned}
P M_{t o t}(k ; \sigma) P=G(k ; \nu),  \quad f(k ; \sigma)=g(k ; \nu),
\end{aligned}
\end{equation}
with $P=\diag\{-1,1\}$. Therefore, $\omega=kv$ is a resonant frequency if and only if $k$ is a zero of $g(k;\nu)$. Since $G(k;\nu)$ is a polynomial in $\nu$ of 
degree $2N$, we expand
\begin{equation}\label{equ:G_expession}
    G(k;\nu) = \sum_{l=0}^{2N} G_l(k)\,\nu^l, \qquad
    g(k;\nu) = \sum_{l=0}^{2N} g_l(k)\,\nu^l,
\end{equation}
where $g_l(k) = G_l(k)_{2,2}$.

Next, extending \cite[Proposition 4.2]{pm1}, we derive the asymptotic behavior of $G_{l}(k)$ when $k\to k_0$ for a fixed $k_0\in E$. The proof is postponed to Appendix \ref{appex:proofexpG} for ease of reading. 


\begin{theorem}\label{thm: G_{I,l} expand}
Define the convention that $\cot(t_0k_0)=\cot(t_{2N}k_0):=-\i$. Let $z=k-k_0$, $n$ be the cardinality of set $I$ in (\ref{def:Ik0}), and $m$ be the number of nonzero eigenvalues of $\mathcal{C}(k_0)$. For $1\leq l\leq m$, $G_l(k_0+z)$ admits the following expansions: as $z\to 0$,
\begin{equation}\label{expGl}
\begin{aligned}
       G_l(k_0+z)=&C_1\cdot 2^l(-2\i )^{2N-1-2l}z^{n-2l}\left[(1+C_2z) \sum_{\substack{j_1\prec j_2\prec \cdots\prec j_l\\\{j_i,j_i+1\}\subset I}}\left(\prod_{i=1}^{l}\theta_{j_i}\right)\cdot R\right.\\
       &+z\sum_{j=1}^p\frac{\cot(t_{a_j-1}k_0)}{t_{a_j}}\sum_{\substack{j_1\prec j_2\prec\cdots\prec j_{l-1}\\\{j_i,j_i+1\}\subset I\setminus\{a_j\}}}\left(\prod_{i=1}^{l-1}\theta_{j_i}\right)\cdot R_{a_j}\\
       &\left.+z\sum_{j=1}^p\frac{\cot(t_{b_j+1}k_0)}{t_{b_j}}\sum_{\substack{j_1\prec j_2\prec\cdots\prec j_{l-1}\\\{j_i,j_i+1\}\subset I\setminus\{b_j\}}}\left(\prod_{i=1}^{l-1}\theta_{j_i}\right)\cdot R_{b_j}\right]+\mc{O}(z^{n-2l+2}),
\end{aligned}
\end{equation}
where
\begin{align*}
C_1=(-1)^{\sum_{j\in I}m_j}\prod_{j\notin I}\sin (t_jk_0)\prod_{j\in I}t_j,\ C_2=\sum_{j\notin I}t_j\cot(t_jk_0),\ R_j=\begin{cases}
    R_+,&j=1,\\
    R_-,&j=2N-1,\\
    R,&1<j<2N-1,
\end{cases}
\end{align*}
with $m_j$ defined by (\ref{equ:defmj}). Consequently, for $1\leq l\leq m$,
\begin{equation*}\label{expgl}
\begin{aligned}
       g_l(k_0+z)=&C_1\cdot 2^l(-2\i )^{2N-1-2l}z^{n-2l}\left[(1+C_2z)\sum_{\substack{j_1\prec j_2\prec \cdots\prec j_l\\\{j_i,j_i+1\}\subset I}}\left(\prod_{i=1}^{l}\theta_{j_i}\right)\right.\\
       &+z\sum_{j=1}^p\frac{\cot(t_{a_j-1}k_0)}{t_{a_j}}\sum_{\substack{j_1\prec j_2\prec\cdots\prec j_{l-1}\\\{j_i,j_i+1\}\subset I\setminus\{a_j\}}}\left(\prod_{i=1}^{l-1}\theta_{j_i}\right)\\
       &\left.+z\sum_{j=1}^p\frac{\cot(t_{b_j+1}k_0)}{t_{b_j}}\sum_{\substack{j_1\prec j_2\prec\cdots\prec j_{l-1}\\\{j_i,j_i+1\}\subset I\setminus\{b_j\}}}\left(\prod_{i=1}^{l-1}\theta_{j_i}\right)\right]+\mc{O}(z^{n-2l+2}).
\end{aligned}
\end{equation*}
In addition,
\begin{align}\label{equ: g_0 expand}
g_0(k_0+z)=C_1\cdot(-2\i)^{2N-1}z^n\l(1+C_2z\r)+\mc{O}(z^{n+2}).
\end{align}
\end{theorem}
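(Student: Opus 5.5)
The plan is to expand the product \eqref{equ: G(k;delta) expand} in powers of $\nu$, reorganise each term as a product of elementary $2\times 2$ matrices, and Taylor-expand the propagation matrices around $k_0$. Write $G_l(k)$ as the sum over all choices of $l$ positions (among the $2N$ factors $R+\nu S$) where $S$ is selected instead of $R$. The key algebraic identities are
\[
R L(x) R = -2\i \sin(x)\, R, \qquad R L(x) S \text{ and } S L(x) R \text{ explicit}, \qquad S L(x) S = -L(-x),
\]
which I would verify directly from \eqref{equ:def LR} and \eqref{def:matrixrs}. These identities let every term collapse to a scalar multiple of a few fixed matrices. Setting $k=k_0+z$, we have $\sin(t_j k)=(-1)^{m_j}\sin(t_j z)=(-1)^{m_j}(t_j z+\mathcal{O}(z^3))$ for $j\in I$. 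For $j\notin I$, we have $\sin(t_jk)=\sin(t_jk_0)\bigl(1+z\,t_j\cot(t_jk_0)+\mathcal{O}(z^2)\bigr)$. This produces the prefactor $C_1$ and, after multiplying out, the factor $1+C_2z$.

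The first step is to understand which configurations of $S$-insertions give the lowest power of $z$. A run of consecutive $R$-factors separated by $L_j$ contributes $-2\i\sin(t_jk)$, which is $\mathcal{O}(z)$ exactly when $j\in I$. Inserting an $S$ between $L_{j}$ and $L_{j+1}$ "breaks" the product there. The combination $L_{j+1}SL_j$ sandwiched by $R$'s behaves like $R L(t_{j+1}k)\, S\, L(t_jk) R$. I would compute it explicitly and show that it is $\mathcal{O}(1)$ rather than carrying the two sine factors. An isolated $S$ alone therefore kills only one sine, so it does not change the count, but contributes a term with a different matrix structure. The natural pairing is that two adjacent $S$'s around $L_j$ (with $L_{j-1}$, $L_{j+1}$ neighbours) together remove the two factors $\sin(t_jk)\sin(t_{j+1}k)$ in favour of $1/(t_jt_{j+1})\cdot z^{-2}$ times $z^2$. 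I expect the cleaner bookkeeping is this: each selected index $j_i$ with $\{j_i,j_i+1\}\subset I$ removes two $z$-factors and yields $\theta_{j_i}$, and the non-overlap condition $j_1\prec j_2\prec\cdots$ encodes that one cannot reuse a resonant segment. Hence the leading order $z^{n-2l}$ has coefficient $2^l(-2\i)^{2N-1-2l}C_1\sum\prod\theta_{j_i}\cdot R$. This extends the computation of \cite[Proposition~4.2]{pm1} (the case $k_0=0$, $I=\llbracket 1,2N-1\rrbracket$), and I will follow its proof structure, essentially an induction on the number of factors.

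The second step is the $z^{n-2l+1}$ correction, which I expect to be the main obstacle. Three sources contribute. The first is the linear Taylor terms of the non-resonant sines, which give $C_2z$. The second is configurations where a pair breaks at the boundary of a block $\mathcal{I}_j$, pairing the resonant $a_j$ with the non-resonant neighbour $a_j-1$ (or $b_j$ with $b_j+1$). Such a pair removes one $z$-factor, namely $\sin(t_{a_j}k)$, and replaces $\sin(t_{a_j-1}k)$ by a cosine. This yields the term $\cot(t_{a_j-1}k_0)/t_{a_j}$ relative to $C_1$, with the remaining $l-1$ pairs chosen in $I\setminus\{a_j\}$. At the chain ends $a_j=1$ or $b_j=2N-1$, the outer factor is $R+\nu S$ with no neighbour, which gives the $-\i$ convention and the matrices $R_\pm$. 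The third source is all other configurations, including $\mathcal{O}(z^3)$ corrections of resonant sines and interior single insertions. I must show these are $\mathcal{O}(z^{n-2l+2})$; parity arguments such as $\sin$ having only odd Taylor terms should give this. I would carefully enumerate the local $2\times2$ products to check that no other $z^{n-2l+1}$ terms survive, since this is where sign and matrix-type errors are most likely.

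Finally, I take the $(2,2)$ entry, noting that $R_{2,2}=(R_\pm)_{2,2}=1$, to obtain the expansion for $g_l$. The case $l=0$, \eqref{equ: g_0 expand}, follows from the first identity alone, since $G_0=R L_{2N-1}R\cdots L_1R=\prod_j(-2\i\sin(t_jk))\,R$. Here the $z^{n+1}$ term comes only from $C_2$, because resonant sines are odd in $z$.
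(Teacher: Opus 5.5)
Your route is the paper's: expand over the $\binom{2N}{l}$ choices of $S$-factors, collapse each product with $RMR=\eta(M)R$ (your $RL(x)R=-2\mathrm{i}\sin(x)R$ is the case $M=L$), Taylor-expand at $k_0$, and read off the orders. The contributing configurations you name are exactly the right ones. They are disjoint resonant pairs giving $\prod\theta_{j_i}$; the linear terms of non-resonant sines giving $C_2$; a pair $\{a_j-1,a_j\}$ or $\{b_j,b_j+1\}$ giving the cotangent terms; and an outermost $S$ next to a resonant end index giving $R_\pm$ with the $-\mathrm{i}$ convention. The $g_0$ computation and the $(2,2)$-entry step are also fine.

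\textbf{The gap.} What is missing is the step that actually proves the theorem. You need a bound valid for \emph{every} selection $Q$ showing $\tau_0(Q)\ge n-2l$, and a complete classification of the selections with $\tau_0(Q)=n-2l+1$. You defer this and suggest parity of $\sin$ will supply it. Parity does not do this: it only removes linear corrections \emph{inside} the leading configurations (the resonant sines and $2(-1)^{m_j+m_{j+1}}\cos((t_j-t_{j+1})z)$). It says nothing about which selections reach a given order.

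\textbf{What the paper uses instead.} The paper uses a counting argument. Group the factors between consecutive $R$'s into gaps $M_i$ carrying $c_i$ copies of $S$, so that $Q=\prod_i\eta(M_i)\,M_LRM_R$ with $M_LRM_R\neq0$. An $S$-free gap is a single $L_j$, of order exactly $z$ iff $j\in I$. Since $c_L+c_R+\sum_i c_i=l$, at least $2N-1-2l+c_L+c_R$ gaps are $S$-free. Discarding those with non-resonant index leaves at least $n-2l+(2c_L-\tilde c_L)+(2c_R-\tilde c_R)$ resonant $S$-free gaps, where $\tilde c_L,\tilde c_R$ count the resonant indices in the end segments. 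The equality analysis then leaves only three candidates at order $n-2l+1$:
\begin{enumerate}
\item an end $S$ at a resonant end index;
\item all $c_i\le 1$ with exactly one pair straddling a block boundary;
\item one gap containing two $S$'s.
\end{enumerate}

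\textbf{The double-$S$ gap.} The last candidate is where your sketch goes wrong. For three consecutive resonant indices, $SL_{j+1}S=-L_{j+1}^{-1}$ gives $\eta(L_{j+2}SL_{j+1}SL_j)=2\mathrm{i}\sin\bigl((t_{j+2}-t_{j+1}+t_j)k\bigr)=\mathcal{O}(z)$. Only this extra vanishing pushes such selections to order $\ge n-2l+2$; if this gap were of order one, it would contribute at $z^{n-2l+1}$ and change the formula.

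Your ``natural pairing'' paragraph has this backwards. An isolated interior $S$ between $L_{j+1}$ and $L_j$ kills \emph{two} sines, replacing $\eta(L_{j+1})\eta(L_j)$ by $2\cos(t_jk-t_{j+1}k)$, and this is what produces $\theta_j$. Two adjacent $S$'s around $L_j$ consume three sines and return a factor that is $\mathcal{O}(z)$, so they never produce $\theta$'s. Only an $S$ in an outermost slot kills a single sine.

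Your later ``cleaner bookkeeping'' sentence is correct. To have a proof, replace the heuristic with the counting bound above plus the double-$S$ identity.
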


\section{Newton polygon method and the proof of (\ref{equ: omega^i,pm expand})} \label{sec:5}

In this section, we determine the asymptotic behavior of the scattering resonances 
by analyzing the zeros of the analytic function $g(k;\nu)$ characterized in 
\Cref{sec:4}. In \Cref{subsec:newtonresona}, we apply the Newton polygon method 
to classify these zeros into two types according to their asymptotic order in $\nu$. 
In \Cref{subsec:asymexpo}, we derive a rigorous and more precise asymptotic 
expansion for the $2m$ branches of order $k_0 + \mathcal{O}(\nu^{1/2})$, 
culminating in the proof of Theorem~\ref{thm:mainresult1}.

\subsection{Newton polygon and asymptotic analysis of resonances} \label{subsec:newtonresona}

We investigate the asymptotic behavior of the zeros of $g(k;\nu)$ as $\nu \to 0$. 
By Theorem~\ref{thm: resonant frequency f(z;mu) zeros} and (\ref{equ:g=f}), the limiting function is
\[
    g(k;0) = -(2\mathrm{i})^{2N-1} \prod_{j=1}^{2N-1} \sin(t_j k).
\]
For $k_0 \in E$, Rouch\'{e}'s theorem guarantees exactly $n = n(k_0)$ zeros of 
$g(\,\cdot\,;\nu)$ in a small neighborhood of $k_0$ for $\nu$ sufficiently small. 
We aim to determine the asymptotic order of each of these $n$ zeros. To this end, recall $n_j$ in \eqref{def:n_j}, and partition $\{1, \ldots, p\}$ into
\begin{equation}\label{equ:defT}
    T^e = \{j : n_j \text{ is even}\}, \qquad T^o = \{j : n_j \text{ is odd}\},
\end{equation}
and define, for each $j$,
\begin{equation}\label{equ:def_fj}
    f_j(l) = \begin{cases}
        n_j - 2l, & 0 \leq l \leq \left\lfloor n_j/2 \right\rfloor, \\[4pt]
        \dfrac{n_j+1}{2} - l, & \left\lfloor n_j/2 \right\rfloor < l \leq \left\lceil n_j/2 \right\rceil
        \quad (j \in T^o \text{ only}).
    \end{cases}
\end{equation}

The following lemma for the summation of $f_j(l_j)$'s will be useful. 

\begin{lemma}\label{thm: order lower boundary}
For an integer $0 \leq l \leq n-m$, consider the optimization problem
\begin{equation*}\label{equ:defSl}
    S(l) := \min_{\substack{l_1, \ldots, l_p \in \mathbb{Z}_{\geq 0} \\ 
    l_1 + \cdots + l_p = l}} \sum_{j=1}^p f_j(l_j).
\end{equation*}
Then
\begin{equation}\label{equ:Slformula}
    S(l) = \begin{cases}
        n - 2l, & 0 \leq l \leq m, \\
        n - m - l, & m < l \leq n-m.
    \end{cases}
\end{equation}
When $0 \leq l \leq m$, the minimum is attained if and only if $l_j \leq 
\lfloor n_j/2 \rfloor$ for all $1 \leq j \leq p$. When $m < l \leq n-m$, 
the minimum is attained if and only if $l_j = n_j/2$ for $j \in T^e$, exactly 
$l - m$ indices $j \in T^o$ satisfy $l_j = (n_j+1)/2$, and the remaining 
$j \in T^o$ satisfy $l_j = (n_j-1)/2$.
\end{lemma}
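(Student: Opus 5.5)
The optimization is separable across blocks once the total $l$ is fixed, so I would analyze each $f_j$ separately and then combine them. Each $f_j$ is defined on $\llbracket 0, \lceil n_j/2\rceil\rrbracket$. It is piecewise linear with slope $-2$ on $\llbracket 0,\lfloor n_j/2\rfloor\rrbracket$. When $n_j$ is odd, it has one extra step of slope $-1$: $f_j(\lfloor n_j/2\rfloor)=1$ and $f_j((n_j+1)/2)=0$. Each $f_j$ is therefore convex and decreasing, with successive decrements $2,2,\dots,2$ (there are $\lfloor n_j/2\rfloor$ of them), followed by a single decrement $1$ when $j\in T^o$. Hence
\[
\sum_j f_j(0)=\sum_j n_j=n,
\]
and every admissible choice satisfies $\sum_j \lfloor n_j/2\rfloor = m$ (Theorem \ref{thm: C(k) nonzero eigenvalue}) and $\sum_j\lceil n_j/2\rceil = m+\#T^o = n-m$. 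This explains the range $l\le n-m$.

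The key identity is a telescoping one. For $l_j\in\llbracket 0,\lceil n_j/2\rceil\rrbracket$,
\[
f_j(l_j)=n_j-2l_j+\max\{0,\,l_j-\lfloor n_j/2\rfloor\},
\]
and this also holds for $j\in T^e$, where the max term is always $0$ on the domain. Summing over $j$ gives
\[
\sum_j f_j(l_j)=n-2l+\sum_j e_j,\qquad e_j:=\max\{0,\,l_j-\lfloor n_j/2\rfloor\}\in\{0,1\},
\]
with $e_j=1$ possible only for $j\in T^o$. So minimizing $\sum_j f_j(l_j)$ is the same as minimizing $\#\{j: l_j>\lfloor n_j/2\rfloor\}$, the number of blocks that use their extra step.

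If $l\le m=\sum_j\lfloor n_j/2\rfloor$, we can choose every $l_j\le\lfloor n_j/2\rfloor$, so the minimum is $n-2l$. It is attained exactly when all $e_j=0$, which gives the first case of \eqref{equ:Slformula} and its equality characterization.

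If $m<l\le n-m$, then $\sum_j e_j\ge \sum_j (l_j-\lfloor n_j/2\rfloor)=l-m$, since $l_j-\lfloor n_j/2\rfloor\le e_j$. Hence $\sum_j f_j(l_j)\ge n-m-l$. Equality forces $e_j=l_j-\lfloor n_j/2\rfloor$ for every $j$, meaning no $l_j$ lies strictly below $\lfloor n_j/2\rfloor$, together with $\sum_j e_j=l-m$. This means $l_j=n_j/2$ for $j\in T^e$ and $l_j\in\{(n_j\pm1)/2\}$ for $j\in T^o$, with exactly $l-m$ of them equal to $(n_j+1)/2$. Such a choice exists because $l-m\le \#T^o=n-2m$. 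This gives the second case of \eqref{equ:Slformula} and its characterization.

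The only point needing care is the equality characterization in the second case. The inequality $e_j\ge l_j-\lfloor n_j/2\rfloor$ has slack exactly when $l_j<\lfloor n_j/2\rfloor$, so tightness forces every block to be "full". I would write that step explicitly. Everything else is bookkeeping with $\lfloor\cdot\rfloor$ and $\lceil\cdot\rceil$.
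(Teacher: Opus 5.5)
Your proof is correct and is essentially the paper's argument: writing $f_j(l_j)=n_j-2l_j+e_j$ and using $e_j\ge 0$ (first case) or $e_j\ge l_j-\lfloor n_j/2\rfloor$ (second case) reproduces exactly the paper's two per-block bounds $f_j(l_j)\ge n_j-2l_j$ and $f_j(l_j)\ge \lceil n_j/2\rceil-l_j$, summed over $j$, with the same equality conditions. You also check explicitly that minimizers exist, via $l\le m$ in the first case and $l-m\le \#T^o=n-2m$ in the second; the paper leaves this point implicit.
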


\begin{proof}
\textit{Case $0 \leq l \leq m$.}
For each $j$, $f_j(l_j) \geq n_j - 2l_j$, with equality if and only if 
$l_j \leq \lfloor n_j/2 \rfloor$. Summing over $j$ and using $\sum_j l_j = l$,
\[
    \sum_{j=1}^p f_j(l_j) \geq \sum_{j=1}^p (n_j - 2l_j) = n - 2l,
\]
with equality if and only if $l_j \leq \lfloor n_j/2 \rfloor$ for all $j$.

\textit{Case $m < l \leq n-m$.}
For each $j$, $f_j(l_j) \geq \lceil n_j/2 \rceil - l_j$, with equality if and 
only if $\lfloor n_j/2 \rfloor \leq l_j \leq \lceil n_j/2 \rceil$. Similarly, we have 
\[
    \sum_{j=1}^p f_j(l_j) 
    \geq \sum_{j=1}^p \left(\left\lceil\frac{n_j}{2}\right\rceil - l_j\right)
    = \sum_{j=1}^p \left(n_j - \left\lfloor\frac{n_j}{2}\right\rfloor - l_j\right)
    = n - m - l.
\]
The equality holds if and only if $l_j = n_j/2$ for $j \in T^e$, exactly $l - m$ 
indices $j \in T^o$ satisfy $l_j = (n_j+1)/2$, and the remaining $j \in T^o$ satisfy 
$l_j = (n_j-1)/2$.
\end{proof}

Our analysis for the asymptotics of resonances is mainly based on the expansion of $G(k, \nu)$ in (\ref{equ: G(k;delta) expand}) by matrices $L, R, S$. In particular, the asymptotics of $G(k, \nu)$ in the resonant intervals $\mathcal{I}_j = \llbracket a_j, b_j \rrbracket$'s (red interval in Figure \ref{fig:interval_matrix}) and the non-resonant intervals $\llbracket b_{j-1}+1, a_j-1 \rrbracket$'s (blue interval in Figure \ref{fig:interval_matrix}) are different. So we will treat them separately in the following discussions. To this end, for resonant intervals $\mathcal{I}_j = \llbracket a_j, b_j \rrbracket \subset \llbracket 1, 2N-1 \rrbracket$, 
we define the resonant factor 
\begin{equation}\label{equ: G_j def}
    G_j(k;\nu) := (R+\nu S)L_{b_j}(k)(R+\nu S)L_{b_j-1}(k)(R+\nu S)
    \cdots L_{a_j}(k)(R+\nu S),
\end{equation}
and for the non-resonant interval $\llbracket b_{j-1}+1, a_j-1 \rrbracket$, we define the non-resonant factors $\widetilde{G}_j$ by expansion:
\begin{multline}\label{equ: tilde G_j def}
     \widetilde{G}_j(k;\nu) 
     = L_{a_j-1}(k)(R+\nu S)L_{a_j-2}(k) 
    \cdots L_{b_{j-1}+2}(k)(R+\nu S)L_{b_{j-1}+1}(k), 
    \ j = 2,\ldots,p, 
\end{multline}
and 
\begin{align*}
    \widetilde{G}_1(k;\nu) 
    &= L_{a_1-1}(k)(R+\nu S)\cdots L_1(k)(R+\nu S), \\
    \widetilde{G}_{p+1}(k;\nu) 
    &= (R+\nu S)L_{2N-1}(k)\cdots(R+\nu S)L_{b_p+1}(k),
\end{align*}
for the start and end intervals, respectively. If $a_1 = 1$ (resp.\ $b_p = 2N-1$), then $\widetilde{G}_1$ (resp.\ 
$\widetilde{G}_{p+1}$) is understood as the identity matrix.

We first give the following proposition for the asymptotics of $G_j(k, \nu)$ as $k\rightarrow 0$. Its proof follows 
the same argument as~\cite[Proposition~4.2]{pm1}; although~\cite{pm1} treats only 
intervals of odd length, the even-length case is completely analogous and is omitted.

\begin{proposition}\label{prop: asymptotic Gj}
$G_j(k;\nu)$ is a polynomial in $\nu$ of degree $n_j+1$:
\begin{equation}\label{equ: G_j expansion}
    G_j(k;\nu) = \sum_{l=0}^{n_j+1} G_{j,l}(k)\,\nu^l,
\end{equation}
whose coefficients have the following asymptotic behavior as $k \to 0$:
\begin{itemize}
    \item For $l = 0$,
    \[
        G_{j,0}(k) = \prod_{s=a_j}^{b_j} t_s \cdot (-2\mathrm{i}k)^{n_j} \cdot R 
        + \mathcal{O}(k^{n_j+2}).
    \]
    \item For $1 \leq l \leq \lfloor n_j/2 \rfloor$,
    \[
        G_{j,l}(k) = \left(\prod_{s=a_j}^{b_j} t_s\right) 
        2^l(-2\mathrm{i}k)^{n_j-2l} c_{a_j,b_j,l}\, R 
        + \mathcal{O}(k^{n_j-2l+1}).
    \]
    \item For $l = \lceil n_j/2 \rceil$ with $n_j$ odd,
    \[
        G_{j,\lceil n_j/2 \rceil}(k) = 2^{\lceil n_j/2 \rceil} I + \mathcal{O}(k).
    \]
\end{itemize}
In particular, $G_{j,l}(k) = \mathcal{O}(k^{f_j(l)})$ as $k \to 0$ with $f_j(l)$ defined by (\ref{equ:def_fj}), for 
$0 \leq l \leq \lceil n_j/2 \rceil$.
\end{proposition}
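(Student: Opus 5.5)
We treat $G_j(k;\nu)$ as a product of the $n_j+1$ factors $(R+\nu S)$ interleaved with the $n_j$ propagators $L_s(k)$, $s\in\mathcal I_j$. We expand it in $\nu$ by choosing, in each factor, either $R$ or $\nu S$. The coefficient $G_{j,l}(k)$ is then a sum over subsets $J\subset\{0,\dots,n_j\}$ of size $l$ marking the positions where $\nu S$ is chosen. Here the statement is read with $t_sk_0\in\pi\mathbb Z$ absorbed: $k$ plays the role of the deviation $z=k-k_0$, so that $L_s(k_0+z)=\pm L(t_sz)$ with the overall sign $(-1)^{\sum m_s}$. This reduces everything to the computation as $k\to0$.

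The key algebraic identities are for $R=\begin{pmatrix}-1&1\\-1&1\end{pmatrix}$ and a diagonal $L=\diag(e^{\i x},e^{-\i x})$. Writing $R=\bm u\bm w^\top$ with $\bm u=(1,1)^\top$ and $\bm w=(-1,1)^\top$, we get $RLR=(\bm w^\top L\bm u)R=(-2\i\sin x)R$. Hence every consecutive pair of unmarked $R$ factors produces a scalar $-2\i\sin(t_sk)=-2\i t_sk+\mathcal O(k^3)$. Next we record $\bm w^\top L S L'\bm u$ and similar expressions, using $S\bm u=(-1,1)^\top=\bm w$ and $\bm w^\top S=(1,1)$. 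These give the following: a marked position sandwiched between two unmarked $R$'s gives a scalar $\bm w^\top L_s S L_{s+1}\bm u=\bm w^\top L_s\bm w\cdot(\dots)$. Its leading value is $\mathcal O(1)$ (namely $2\cos$-type terms) rather than $\mathcal O(k)$, because $\bm w^\top\bm w=2$ while $\bm w^\top\bm u=0$.

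From this, isolated marked positions (no two adjacent, none at the ends) reproduce the combinatorics of $c_{a_j,b_j,l}$. Each marked $S$ between propagators $t_{s}$ and $t_{s+1}$ removes two factors $-2\i t k$ and contributes $2$. This yields $2^l(-2\i k)^{n_j-2l}\prod t_s\cdot\prod_i \theta_{j_i}$, using $\theta_{j}=1/(t_jt_{j+1})$. Summing over index sets $j_1\prec\dots\prec j_l$ gives the claimed leading term for $1\le l\le\lfloor n_j/2\rfloor$, and $l=0$ is the pure product $\prod(-2\i\sin t_sk)R$ with odd cubic correction, hence error $\mathcal O(k^{n_j+2})$.

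The remaining task is to show that every other configuration is of strictly higher order in $k$. These are marks at the boundary positions $0$ or $n_j$, or adjacent marks. For these we bound by counting: each run of unmarked $R$'s of length $r$ gives $k^{r-1}$, and boundary or adjacent marks destroy fewer $k$-factors per mark than the interior isolated ones. The bookkeeping is $f_j(l)$-type, checking that the exponent is at least $n_j-2l+1$. This counting is the main obstacle, especially showing exactly $+1$ rather than $+2$ in the odd/even cases. For $n_j$ odd and $l=\lceil n_j/2\rceil$, the only $\mathcal O(1)$ configuration marks all even positions $0,2,\dots,n_j$ (no unmarked pair remains). The product $S L S L\cdots S$ at $k=0$ equals $S^{l}$ times $I$'s, up to the $R$'s at odd positions. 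A direct check using $SRS\propto$ identities gives $2^{l}I$, and all other configurations carry at least one factor $k$. The final claim $G_{j,l}=\mathcal O(k^{f_j(l)})$ is then read off from the three cases.
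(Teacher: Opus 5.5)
For $1\le l\le\lfloor n_j/2\rfloor$ your argument is the same as the paper's. The paper defers to \cite[Proposition~4.2]{pm1}, and the mechanism there (reused in Appendix~\ref{appex:proofexpG}) is exactly $RMR=\eta(M)R$ with $\eta(M)=\bm w^\top M\bm u$, followed by a count over the gaps between consecutive $R$'s. The bookkeeping you call the main obstacle is short once you state the fact it rests on. A gap containing $c$ factors $S$ has $\eta(M)=\eta(S^c)+\mathcal O(k)$: this is $\eta(\pm I)+\mathcal O(k)=\mathcal O(k)$ for $c$ even, and $\pm 2+\mathcal O(k)$ for $c$ odd. There are $n_j-l$ gaps and at most $l-c_L-c_R$ of them are odd, so at least $n_j-2l+c_L+c_R$ gaps are $\mathcal O(k)$. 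Equality holds only if $c_L=c_R=0$ and every $c_i\in\{0,1\}$. This gives both the leading term and the $\mathcal O(k^{n_j-2l+1})$ remainder; no sharpness of the $+1$ is needed. The degree claim is immediate, since the top coefficient $SL_{b_j}S\cdots L_{a_j}S$ is invertible.

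The third bullet, however, fails as you set it up. For $n_j$ odd and $l=(n_j+1)/2$ there are $(n_j-1)/2$ gaps, and order $0$ requires every gap to have odd $c_i$. Parity then forces $c_L+c_R=1$ and all $c_i=1$. So there are exactly two $\mathcal O(1)$ configurations, not one:
\begin{itemize}
\item the alternating product $RLSLR\cdots RLS$ (with $\nu S$ in the rightmost factor), which at $k=0$ equals $(RS)^l=R_+^l=2^{l-1}R_+$;
\item the alternating product $SLRLS\cdots SLR$ (with $\nu S$ in the leftmost factor), which equals $(SR)^l=R_-^l=2^{l-1}R_-$.
\end{itemize}
Your set ``$0,2,\dots,n_j$'' is not even consistent, since $n_j$ is odd. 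More to the point, every configuration here contains at least one factor $R$, which has rank one, so no single term can produce $2^lI$. The identity arises only from the sum $2^{l-1}(R_++R_-)=2^lI$.

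Showing that all other configurations are $\mathcal O(k)$ also genuinely needs $\eta(S^2)=\eta(-I)=0$. For example, with $n_j=3$, marking the two middle factors gives the gap $LSLSL$, which is $\mathcal O(k)$ only for this reason.
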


With these tools in place, we establish the following theorem for the resonant frequencies $k(\nu)$ and conclude this subsection. A more precise 
asymptotic expansion for the $2m$ branches of $k_0+\mc{O}(\nu^{1/2})$ is given in 
\Cref{subsec:asymexpo}.

\begin{theorem}\label{thm: asy of zeros}
Let $m=m(k_0)$ be the number of nonzero eigenvalues of $\mathcal{C}(k_0)$. As $\nu \to 0$, the $n = n(k_0)$ zeros of $g(k;\nu)$ near $k_0$ split into two 
types: 
\begin{itemize}
    \item[$\bullet$] $2m$ branches of zeros $k(\nu)$ of the form
    \[
    k(\nu)=k_0+\mc{O}(\nu^{1/2});
    \]
    \item[$\bullet$] $n-2m$ branches of zeros $k(\nu)$ of the form
    \[
    k(\nu)=k_0+\mc{O}(\nu).
    \]
\end{itemize}
\end{theorem}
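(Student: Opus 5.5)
The plan is a Newton polygon argument for $g(k;\nu)$ in the variables $z=k-k_0$ and $\nu$. Rouch\'e's theorem already gives exactly $n$ zeros near $k_0$ for small $\nu$. It then suffices to show two things. First, the Newton polygon of $g(k_0+z;\nu)=\sum_l g_l(k_0+z)\nu^l$ has lower boundary given by the points $(\operatorname{ord}_z g_l,\,l)$ with $\operatorname{ord}_z g_l = S(l)$ from Lemma~\ref{thm: order lower boundary}. Second, the relevant coefficients are genuinely nonzero at the vertices.

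The lower boundary then has two edges. The first joins $(n,0)$ to $(n-2m,m)$ and has slope $-2$. It produces zeros with $z\sim\nu^{1/2}$, and the number of such branches equals its horizontal length, $2m$. The second joins $(n-2m,m)$ to $(0,n-m)$ and has slope $-1$. It produces $n-2m$ zeros with $z\sim\nu$.

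\textbf{Step 1 (upper bounds on orders).} Write $G=\widetilde G_{p+1}G_p\widetilde G_p\cdots G_1\widetilde G_1$. Expand each resonant factor by Proposition~\ref{prop: asymptotic Gj}, with $k$ replaced by the local variable at $k_0$. Each $L_s(k)$ with $s\in\mathcal I_j$ equals $(-1)^{m_s}L(t_s z)$, so the proposition applies after shifting. The non-resonant factors $\widetilde G_j$ are analytic and $\mathcal O(1)$ in $z$.

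Collecting all terms contributing to $\nu^l$, write $l=\sum_j l_j+l'$, where $l'$ counts the $\nu S$ factors taken inside the non-resonant factors. Since $G_{j,l_j}=\mathcal O(z^{f_j(l_j)})$, we get $g_l=\mathcal O(z^{\min\sum f_j(l_j)})$. This minimum is at least $S(l-l')$, and $S(l-l')\ge S(l)-l'$ for $l'\ge 0$. Concretely, $S$ decreases by at most $2$ per step and $f_j(l_j)\ge 0$. Hence every point of the support lies on or above the polygon with vertices $(n,0)$, $(n-2m,m)$, $(0,n-m)$. One must also check that indices $l_j>\lceil n_j/2\rceil$ only give orders $\ge 0$. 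This holds trivially, and such terms land above the segment, since after $l=n-m$ the segment reaches order $0$.

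\textbf{Step 2 (vertex coefficients are nonzero).} At $(n,0)$ we use \eqref{equ: g_0 expand}: the leading coefficient $C_1(-2\i)^{2N-1}$ is nonzero. At $(n-2m,m)$ we use Theorem~\ref{thm: G_{I,l} expand} with $l=m$. The leading coefficient is proportional to $C_{I,m}=\prod_j c_{a_j,b_j,\lfloor n_j/2\rfloor}$, which is positive since every $\theta>0$. Note that $C_{I,m}$ is the constant term of $P_I$, namely the product of the nonzero eigenvalues.

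At $(0,n-m)$ I would compute the $z^0$ coefficient of $g_{n-m}$ directly. By the equality case of Lemma~\ref{thm: order lower boundary}, with $l=n-m$ all odd blocks take $l_j=\lceil n_j/2\rceil$, which contributes $2^{\lceil n_j/2\rceil}I$. The even blocks take $l_j=n_j/2$. Their $z^0$ term is a constant matrix; I would read it off from the proof of Proposition~\ref{prop: asymptotic Gj} (it is a multiple of $R$ or of a related rank-one matrix). The non-resonant factors are evaluated at $k_0$ with $l'=0$. The difficulty is to show that the $(2,2)$ entry of this product of constant matrices with the $L_s(k_0)$, $s\notin I$, is nonzero.

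\textbf{Step 3 (splitting of the zeros).} Standard Puiseux/Newton polygon theory for analytic $g(k_0+z;\nu)$ then yields the claim. The slope $-2$ edge gives $2m$ roots $z=c\nu^{1/2}(1+o(1))$, with $c^2$ solving $\sum_q C_{I,q}(\cdot)^{m-q}\cdot$const$=0$. These $c$ are nonzero because $C_{I,m}\ne0$, so these roots are $\mathcal O(\nu^{1/2})$. The slope $-1$ edge gives $n-2m$ roots that are $\mathcal O(\nu)$. Even if the last vertex coefficient were to vanish, the $n-2m$ remaining roots would still satisfy $|z|\lesssim\nu$. The reason is that the $2m$ roots of size $\nu^{1/2}$ account for the slope-$2$ edge exactly, and all other support points lie on or above the line of slope $-1$ through $(n-2m,m)$. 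So the crucial facts are only the first two vertices together with the lower bound of Step 1.

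The main obstacle I expect is Step 1's bookkeeping. It requires controlling cross-terms among blocks and non-resonant factors uniformly, and checking that the $\nu S$ factors inside $\widetilde G_j$ never lower the order below the polygon. This is exactly the content of $S(l-l')\ge S(l)-l'$ combined with the slopes $\ge -2$.
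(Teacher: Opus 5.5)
Your proposal is correct and is essentially the paper's argument: you factor $G$ into resonant and non-resonant pieces, bound the $z$-order of the $\nu^l$ coefficient below by $S(l)$ using Proposition~\ref{prop: asymptotic Gj} and Lemma~\ref{thm: order lower boundary}, and read off the two edges of the Newton polygon; the one small slip is in Step~1, where the inequality you need is just the monotonicity $S(l-l')\ge S(l)$, since $S(l-l')\ge S(l)-l'$ alone would not place the point on or above the polygon. Your explicit nonvanishing check at $(n,0)$ and $(n-2m,m)$ via \eqref{equ: g_0 expand} and Theorem~\ref{thm: G_{I,l} expand}, together with your observation that the vertex $(0,n-m)$ is not needed, is more careful than the paper, which asserts that $S(l)$ gives the Newton polygon exactly; that assertion can fail at $(0,n-m)$: for $\bm t=(0.3,1,1,1,0.7)$ and $k_0=\pi$ one finds $g(k_0;\nu)=\nu^3(2-\nu)^3$, so the coefficient at $(0,2)$ vanishes and the remaining branch is $k_0+\mathcal{O}(\nu^2)$, which is still consistent with the theorem.
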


\begin{proof}
We decompose $G(k;\nu)$ in (\ref{equ: G(k;delta) expand}) into resonant factors $G_j$ and 
non-resonant factors $\widetilde{G}_j$:
\begin{align}
    G(k;\nu) = & \widetilde{G}_{p+1}(k;\nu)\,G_p(k;\nu)\cdots 
    \widetilde{G}_2(k;\nu)\,G_1(k;\nu)\,\widetilde{G}_1(k;\nu) \nonumber \\
    =& \widetilde{G}_{p+1}(k_0+z;\nu)\,G_p(k_0+z;\nu)\cdots \widetilde{G}_2(k_0+z;\nu)\,G_1(k_0+z;\nu)\,\widetilde{G}_1(k_0+z;\nu), \label{equ:expanG1}
\end{align}
where $z = k - k_0$. We first estimate the resonant factor $G_j(k_0+z;\nu)$. By the definition of $L$ in (\ref{equ:def LR}), we have $L(z_1+z_2) = L(z_1)L(z_2)$ and $(-1)^{m_j}L_j(k_0)$ is an identity matrix for $m_j$ in (\ref{equ:defmj}), $j \in I$. This yields 
\begin{align*}
    &G_j(k_0+z;\nu) \\
    =& (R+\nu S)L_{b_j}(k_0+z)(R+\nu S)L_{b_j-1}(k_0+z)(R+\nu S)
    \cdots L_{a_j}(k_0+z)(R+\nu S)\\
    =& (R+\nu S)L_{b_j}(k_0)L_{b_j}(z)(R+\nu S)L_{b_j-1}(k_0)L_{b_j-1}(z)(R+\nu S)
    \cdots L_{a_j}(k_0)L_{a_j}(z)(R+\nu S)\\ 
    =& (-1)^{\sum_{s=a_j}^{b_j} m_s}(R+\nu S)L_{b_j}(z)(R+\nu S)L_{b_j-1}(z)(R+\nu S)
    \cdots L_{a_j}(z)(R+\nu S)\\ 
    =&(-1)^{\sum_{s=a_j}^{b_j} m_s} G_j(z;\nu).
\end{align*}
According to Proposition \ref{prop: asymptotic Gj}, the factors $G_j(z; \nu) $'s admit the expansion 
\[
G_j(z; \nu) = \sum_{l=0}^{n_j+1} G_{j,l}(z)\,\nu^l
\]
with $G_{j,l}(z) = \mathcal{O}(z^{f_j(l)})$ as $z \to 0$ with $f_j(l)$ defined by (\ref{equ:def_fj}). 

For the non-resonant factor $\widetilde{G}_j(k;\nu)$, by expanding it in $\nu$, we obtain
\[
    \widetilde{G}_j(k;\nu) = \sum_{l=0}^{c_j} \widetilde{G}_{j,l}(k)\,\nu^l,
\]
for some integer $c_j$, where
\begin{align*}
    \widetilde{G}_{j,0}(k) 
    &= L_{a_j-1}(k)\,R\,L_{a_j-2}(k)\cdots L_{b_{j-1}+2}(k)\,R\,
    L_{b_{j-1}}(k), \quad j = 2,\ldots,p, \\
    \widetilde{G}_{1,0}(k) 
    &= L_{a_1-1}(k)\,R\cdots L_1(k)\,R, \\
    \widetilde{G}_{p+1,0}(k) 
    &= R\,L_{2N-1}(k)\cdots R\,L_{b_p+1}(k).
\end{align*}
Therefore, expanding $G(k;\nu)$ in (\ref{equ:expanG1}) as $\sum_{l=0}^{2N} G_l(k)\nu^l$, we obtain
\begin{multline}\label{equ: G_l expansion NR}
    G_l(k) = (-1)^{\sum_{j \in I} m_j} 
    \sum_{\tilde{l}_1 + \cdots + \tilde{l}_{p+1} + l_1 + \cdots + l_p = l}
    \widetilde{G}_{p+1,\tilde{l}_{p+1}}(k_0+z)\,G_{p,l_p}(z)
  \\  \cdots \widetilde{G}_{2,\tilde{l}_2}(k_0+z)\,G_{1,l_1}(z)\,
    \widetilde{G}_{1,\tilde{l}_1}(k_0+z).
\end{multline}

Based on the expansion above, we employ the Newton polygon method (see Appendix A in \cite{pm1}) to derive the asymptotics of \(k(\nu)\). The key idea is to expand \(g(k_0+z; \nu)\) as \(\sum_{l=0}^{2N} \sum_{q=c(l)}^{\infty} g_{ql} z^{q} \nu^{l}\), where the summation over \(q\) runs over all feasible indices, \emph{i.e.}, those with \(g_{q,l} \neq 0\); the lower boundary of all feasible \((q,l)\)—the Newton polygon—then determines the asymptotic behavior of \(z(\nu)\).

To proceed, let \(\tau_0(f)\) denote the order of the leading term in the asymptotic expansion of \(f(z)\) as \(z\to 0\). The first identity in \eqref{equ:matrixexpan2} yields \(\tau_0(\widetilde{G}_{j,0}(k_0+z)) = 0\) for \(j = 0,1,\dots,p+1\). Moreover, by Proposition~\ref{prop: asymptotic Gj}, Lemma~\ref{thm: order lower boundary}, and the fact that \(S(l)\) in \eqref{equ:Slformula} is decreasing, we have
\begin{equation}\label{equ: tau_0(Q) LB}
\begin{aligned}
&\tau_0\Bigl(\widetilde{G}_{p+1,\tilde{l}_{p+1}}(k_0+z)\,G_{p,l_p}(z)\cdots \widetilde{G}_{2,\tilde{l}_2}(k_0+z)\,G_{1,l_1}(z)\,\widetilde{G}_{1,\tilde{l}_1}(k_0+z)\Bigr) \\
&= \sum_{j=1}^{p+1} \tau_0\!\left(\widetilde{G}_{j,\tilde{l}_j}\right) + \sum_{j=1}^{p} \tau_0\!\left(G_{j,l_j}\right) \ge \sum_{j=1}^{p} f_j(l_j) \ge S\!\left(l - \sum_{j=1}^{p+1} \tilde{l}_j\right) \ge S(l),
\end{aligned}
\end{equation}
with equality if and only if \(\tilde{l}_1 = \cdots = \tilde{l}_{p+1} = 0\) and \(l_1,\dots,l_p\) satisfy the conditions of Lemma~\ref{thm: order lower boundary}. Consequently, from the expansion \eqref{equ: G_l expansion NR}, \(G(k_0+z;\nu)\) can be further expressed as \(\sum_{l=0}^{2N} \sum_{q=S(l)}^{\infty} \widehat{G}_{ql} z^{q} \nu^{l}\). Since \(g(k;\nu) = G(k;\nu)_{2,2}\) and the matrix multiplications involving \(L, R, S\) do not cause \((\widehat{G}_{ql})_{2,2}\) to vanish, we obtain the expansion
\[
g(k_0+z; \nu) = \sum_{l=0}^{2N} \sum_{q=S(l)}^{\infty} g_{ql} z^{q} \nu^{l},
\]
where the coefficients \(g_{ql}\) are non‑zero only if the \(q\geq S(l)\). The expression for \(S(l)\) in \eqref{equ:Slformula} therefore determines the Newton polygon near \((z,\nu) = (0,0)\), as illustrated in Figure~\ref{fig:lower_boundary_k0_general}.
\begin{itemize}
    \item 
For \(m < l \le n-m\), we have \(S(l) = n - m - l\). Hence \(g(k_0+z;\nu) = \sum_{l=0}^{2N} \sum_{q=n-m-l}^{\infty} g_{ql} z^{q} \nu^{l}\). The corresponding edge \(l_1\) in Figure~\ref{fig:lower_boundary_k0_general} (slope \(-1\)) yields \(n-2m\) zero branches satisfying \(z(\nu) = \mathcal{O}(\nu)\).
    \item
For \(0 \le l \le m\), we have \(S(l) = n - 2l\). Hence \(g(k_0+z;\nu) = \sum_{l=0}^{2N} \sum_{q=n-2l}^{\infty} g_{ql} z^{q} \nu^{l}\). The corresponding edge \(l_2\) in Figure~\ref{fig:lower_boundary_k0_general} (slope \(-1/2\)) yields \(2m\) zero branches satisfying \(z(\nu) = \mathcal{O}(\nu^{1/2})\).
\end{itemize}
This completes the proof.

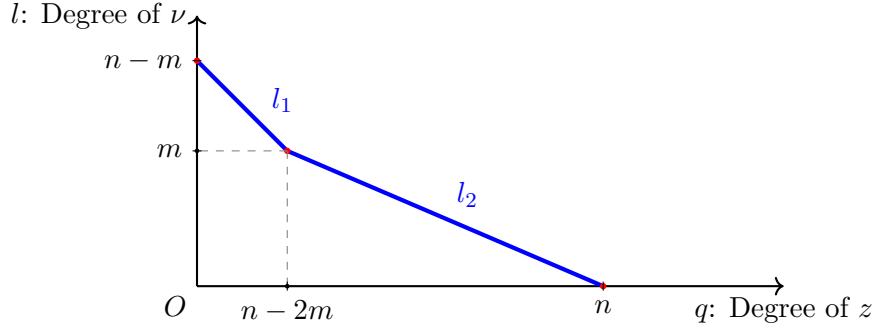
\begin{figure}[!htb]
\centering
\begin{tikzpicture}[scale=0.6]
    \def\N{5}
    \pgfmathsetmacro{\xmax}{2*\N-1}
    \pgfmathsetmacro{\yA}{\N}
    \pgfmathsetmacro{\yB}{\N-2}
    \pgfmathsetmacro{\yC}{0}
    \draw[->, thick] (0,0) -- (0,\N+1) node[left] {$l$: Degree of $\nu$};
    \draw[->, thick] (0,0) -- (\xmax+4,0) node[below] {$q$: Degree of $z$};
    \node at (0,0) [below left] {$O$};
    \draw[ultra thick, blue] (0,\yA) -- node[pos=0.7, above right] {$l_1$} (2,\yB);
    \draw[ultra thick, blue] (2,\yB) -- node[pos=0.5, above right] {$l_2$} (\xmax,\yC);
    \fill[red] (0,\yA) circle (2pt);
    \fill[red] (2,\yB) circle (2pt);
    \fill[red] (\xmax,\yC) circle (2pt);
    \draw[dashed, gray] (2,\yB) -- (2,0);
    \draw[dashed, gray] (2,\yB) -- (0,\yB);
    \draw (0.1,\yA) -- (-0.1,\yA) node[left] {$n-m$};
    \draw (0.1,\yB) -- (-0.1,\yB) node[left] {$m$};
    \draw (2,0.1) -- (2,-0.1) node[below] {$n-2m$};
    \draw (\xmax,0.1) -- (\xmax,-0.1) node[below] {$n$};
    \fill[black] (2,0) circle (1.5pt);
    \fill[black] (0,\yB) circle (1.5pt);
\end{tikzpicture}
\caption{Newton polygon showing the lower boundary of the convex hull near 
$(z,\nu) = (0,0)$. The edges $l_1$ and $l_2$ have slopes $-1$ and $-1/2$, respectively.}
\label{fig:lower_boundary_k0_general}
\end{figure}
\end{proof}

\subsection{{Higher-order expansions of $k_0+\mc{O}(\nu^{1/2})$}} \label{subsec:asymexpo}

We now turn to deriving a more precise asymptotic expansion for the $2m$ branches of zeros of the form $k(\nu) = k_0+\mc{O}(\nu^{1/2})$. In Theorem \ref{thm: G_{I,l} expand}, we have derived the asymptotic form of $g_l(k),0\leq l\leq m$. To proceed, we first need to derive the asymptotic form of $g_{m+1}(k)$. For $j\in T^o$ in (\ref{equ:defT}), let
\[
T^o_j:=t_{a_j}+t_{a_j+2}+t_{a_j+4}+\cdots+t_{b_j-2}+t_{b_j}.
\]
Then we have

\begin{lemma}\label{thm: g_m+1 expand}
Take the convention $\cot(t_0 k_0) = \cot(t_{2N} k_0) := 
-\mathrm{i}$. Let $z=k-k_0$, $n$ be the cardinality of set $I$ in (\ref{def:Ik0}), and $m$ be the number of nonzero eigenvalues of $\mathcal{C}(k_0)$. If $\#T^o = n - 2m > 0$, then as $z \to 0$, 
\begin{align*}
g_{m+1}(k_0+z)=&(-1)^{\sum_{j\in I}m_j}\prod_{j\notin I}\sin(t_jk_0)\prod_{j\in T^o}T^o_j \cdot 2^{m+1}(-2\i)^{2N-2m-3}
\\&z^{n-2m-1}\sum_{j\in T^o}\frac{1}{T^o_j}\l[\cot(t_{a_{j}-1}k_0)+\cot(t_{b_{j}+1}k_0)\r]+\mc{O}(z^{n-2m})
\end{align*}
with $m_j$ defined by (\ref{equ:defmj}).
\end{lemma}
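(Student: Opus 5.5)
We start from the factorization \eqref{equ:expanG1} and the expansion \eqref{equ: G_l expansion NR} with $l=m+1$. By \eqref{equ: tau_0(Q) LB} and Lemma~\ref{thm: order lower boundary}, every term has order at least $S(m+1)=n-m-(m+1)=n-2m-1$ in $z$. We then classify the terms that attain this minimal order exactly. They come in two kinds.

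\emph{Type (a).} Here $\tilde l_1=\cdots=\tilde l_{p+1}=0$, and $(l_1,\dots,l_p)$ is a minimizer of the problem in Lemma~\ref{thm: order lower boundary} with $l=m+1>m$. This forces $l_j=n_j/2$ for $j\in T^e$. Exactly one index $j^\circ\in T^o$ has $l_{j^\circ}=(n_{j^\circ}+1)/2$, and all other $j\in T^o$ have $l_j=(n_j-1)/2$.

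\emph{Type (b).} Here $\sum_j l_j=m$ with every $l_j\le\lfloor n_j/2\rfloor$, and the remaining unit of $\nu$ is absorbed either by one $\tilde l_j=1$ or by a subleading part of some $G_{j,l_j}$. Such a term has order $n-2m$ in $z$ unless the extra factor removes one power of $z$, which happens only if $l_j$ saturates a block. We will argue that every type-(b) term, and every lower-order correction of a type-(a) term, is $\mathcal{O}(z^{n-2m})$. For the terms that survive we need the leading coefficients: those of $G_{j,(n_j\pm1)/2}(z)$ for odd blocks and of $G_{j,n_j/2}(z)$ for even blocks, together with $\widetilde G_{j,0}(k_0)$.

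The key step is a refined version of Proposition~\ref{prop: asymptotic Gj} at the top levels. We need three things.
\begin{enumerate}
\item For $n_j$ odd, $G_{j,(n_j+1)/2}(z)=2^{(n_j+1)/2}I+\mathcal{O}(z)$ (already stated).
\item For $n_j$ odd, the leading term of $G_{j,(n_j-1)/2}(z)$ is a scalar times $zR$. In Proposition~\ref{prop: asymptotic Gj} this scalar is $\prod t_s\,2^{(n_j-1)/2}(-2\mathrm i)\,c_{a_j,b_j,(n_j-1)/2}$. Since only the consecutive-pair selections contribute, $c_{a_j,b_j,(n_j-1)/2}=\prod_{s}t_s^{-1}\cdot T^o_j$: choosing $(n_j-1)/2$ disjoint adjacent pairs out of $n_j$ consecutive indices leaves out exactly one index of the same parity as $a_j$, and that index contributes $t$.
\item For $n_j$ even, $G_{j,n_j/2}(z)=\prod t_s\,2^{n_j/2}c_{a_j,b_j,n_j/2}R+\mathcal{O}(z)$, where $c_{a_j,b_j,n_j/2}=\prod t_s^{-1}$.
\end{enumerate}

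We then multiply out. In a type-(a) term, all factors are proportional to $R$ except the identity block at $j^\circ$. The products $R\,\widetilde G_{j,0}(k_0)\,R$ between neighbouring $R$-blocks reduce to scalars. Since $RL(x)R=(\mathrm e^{-\mathrm i x}-\mathrm e^{\mathrm i x})R$-type identities give factors $-2\mathrm i\sin$ (as used in the proof of Theorem~\ref{thm: G_{I,l} expand}), these scalars assemble into $\prod_{j\notin I}\sin(t_jk_0)$ and powers of $-2\mathrm i$. At $j^\circ$, however, the identity block breaks the chain. The left and right non-resonant neighbours meet directly, so the relevant product is $R\,L_{b_{j^\circ}+1}(k_0)\cdots$ on one side and $\cdots L_{a_{j^\circ}-1}(k_0)R$ on the other, with $I$ in between. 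Writing $L(x)=\cos x\,I+\mathrm i\sin x\,\mathrm{diag}(1,-1)$ and using $R\,\mathrm{diag}(1,-1)\,R$ and $R\,R=0$, we expect the $(2,2)$ entry to produce $\sin(t_{a-1}k_0)\sin(t_{b+1}k_0)\bigl[\cot(t_{a_{j^\circ}-1}k_0)+\cot(t_{b_{j^\circ}+1}k_0)\bigr]$ up to a constant. The boundary convention $\cot=-\mathrm i$ arises when the neighbour is the outer $R(\sigma)$-edge and $R$ is replaced by $R_\pm$. Dividing by $T^o_{j^\circ}$ relative to the common factor $\prod_{j\in T^o}T^o_j$ and summing over $j^\circ\in T^o$ gives the claimed formula; the sign $(-1)^{\sum m_j}$ comes from $L_j(k_0)=(-1)^{m_j}I$.

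The main obstacle is showing that the type-(b) contributions do not reach order $n-2m-1$. Naively, $R\,S\,R$ or a subleading non-$R$ part of $G_{j,\lfloor n_j/2\rfloor}$ could reduce the order by one. We plan to handle this with the same nilpotency bookkeeping ($R^2=0$, $RSR=2R$-type relations) used in the appendix proof of Theorem~\ref{thm: G_{I,l} expand}. We would first check the single-block case $p=1$, $n$ odd, directly, and then argue blockwise.
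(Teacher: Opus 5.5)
Your route is the paper's: take the minimizers from Lemma~\ref{thm: order lower boundary} at $l=m+1$ (all $\tilde l_j=0$, one odd block $j^\circ$ lifted to $(n_{j^\circ}+1)/2$), insert the leading coefficients of Proposition~\ref{prop: asymptotic Gj}, and collapse the product with $RMR=\eta(M)R$. The details you add are correct, and they are exactly what the paper leaves implicit. First, $c_{a_j,b_j,(n_j-1)/2}=T^o_j\prod_{s\in\mathcal{I}_j}t_s^{-1}$. Second, at $j^\circ$ the identity block makes $L_{b_{j^\circ}+1}(k_0)$ and $L_{a_{j^\circ}-1}(k_0)$ adjacent, so that $R\,L(x)L(y)\,R=-2\mathrm{i}\sin(x+y)R=-2\mathrm{i}\sin x\sin y\,(\cot x+\cot y)R$. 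The bookkeeping then gives $2^{m+1}$, $(-2\mathrm{i})^{2N-2m-3}$ and the factor $\prod_{j\in T^o}T^o_j/T^o_{j^\circ}$.

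One small correction concerns the boundary. When $a_{j^\circ}=1$ (or $b_{j^\circ}=2N-1$), the convention $\cot=-\mathrm{i}$ is not produced by replacing $R$ with $R_\pm$. It comes from the single non-resonant $L(x)$ at the end of the chain, which has an $R$ on one side only, together with $(RL(x))_{2,2}=(L(x)R)_{2,2}=\mathrm{e}^{-\mathrm{i}x}=\sin x\,(\cot x-\mathrm{i})$.

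The step you call the main obstacle is not an obstacle, and the worry behind it is mistaken. The order $\tau_0$ of a product is at least the sum of the orders of its factors. So no factor ($RSR$, a subleading part of $G_{j,l_j}$, or anything else) can remove a power of $z$; cancellations can only raise the order. Hence \eqref{equ: tau_0(Q) LB}, which you already invoke, finishes the job:
\begin{itemize}
\item A term with $\sum_j\tilde l_j\ge 1$ has order at least $S(m+1-\sum_j\tilde l_j)\ge S(m)=n-2m$. This holds because the $\widetilde{G}_{j,l}$ are analytic at $k_0$ and $S$ is strictly decreasing.
\item A term with all $\tilde l_j=0$ but $(l_j)$ not a minimizer has order at least $S(m+1)+1$, by the ``only if'' part of the lemma.
\item The $z$-corrections of the minimizing terms are $\mathcal{O}(z^{n-2m})$ trivially.
\end{itemize}
So the nilpotency bookkeeping and the separate $p=1$ check are unnecessary.

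Your description of type (b) also conflates the $\nu$-degree with the $z$-order. A subleading $z$-term of $G_{j,l_j}$ carries no extra power of $\nu$, so it cannot ``absorb'' the remaining unit of $\nu$.
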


\begin{proof}
We adopt the notation from the proof of Theorem \ref{thm: asy of zeros}. In (\ref{equ: G_l expansion NR}) and (\ref{equ: tau_0(Q) LB}), we have shown that $\tau_0(G_{m+1}(k_0+z))=n-2m+1$. From the equality condition in Lemma \ref{thm: order lower boundary} and the expansion in Proposition \ref{prop: asymptotic Gj}, we obtain
\begin{align*}
G_{m+1}(k_0+z)=&(-1)^{\sum_{j\in I}m_j}\sum_{j\in T^o}\ti{G}_{p+1,0}(k_0)G_{p,\l\lf n_p/2\r\rf}(z)\cdots G_{j+1,\l\lf n_{j+1}/2\r\rf}(z)\ti{G}_{j+1,0}(k_0)\\
&G_{j,\l\lc n_j/2\r\rc}(z)\ti{G}_{j,0}(k_0)G_{j-1,\l\lf n_{j-1}/2\r\rf}(z)\cdots G_{1,\l\lf n_1/2\r\rf}(z)\ti{G}_{1,0}(k_0)\\
=&(-1)^{\sum_{j\in I}m_j}\prod_{j\notin I}\sin(t_jk_0)\prod_{j\in T^o}T^o_j \cdot 2^{m+1}(-2\i)^{2N-2m-3}
\\&z^{n-2m-1}\sum_{j\in T^o}\frac{1}{T^o_j}\l[\cot(t_{a_{j}-1}k_0+\cot(t_{b_{j}+1}k_0)\r]\cdot R_j+\mc{O}(z^{n-2m}),
\end{align*}
where $R_j$ is defined as $R_+$ for $j=1$, $R_-$ for $j=p$, and $R$ for $1<j<p$; see (\ref{def:matrixrs}). The expansion of $g_{m+1}(k_0+z)$ is immediately derived using $g_{m+1}(k_0+z)=G_{m+1}(k_0+z)_{2,2}$.
\end{proof}

The following theorem is the precise version of Theorem~\ref{thm:mainresult1}, 
stated in terms of $\nu$; Theorem~\ref{thm:mainresult1} follows immediately via 
$\nu = 2\delta/(\delta+r)$, $\omega = kv$, and Theorem~\ref{thm: resonant 
frequency f(z;mu) zeros}.

\begin{theorem}\label{thm: k(nu) expand}
The $2m$ branches are analytic in $\nu^{1/2}$ and satisfy
\[
    k_j^\pm(\nu) = k_0 \pm \sqrt{\frac{\lambda_j}{2}\,\nu} + \mathcal{O}(\nu),
    \qquad j = 1, 2, \ldots, m,
\]
where $\lambda_1, \ldots, \lambda_m$ are the $m$ nonzero eigenvalues of 
$\mathcal{C}(k_0)$. More precisely, let $\lambda$ be a nonzero eigenvalue of 
$\mathcal{C}(k_0)$ with multiplicity $\mathfrak{m}$. By Theorem~\ref{thm: C_j's}, 
$\lambda$ is an eigenvalue of exactly $\mathfrak{m}$ submatrices of $\mathcal{C}(k_0)$,
denoted $C_{j_1}, \ldots, C_{j_{\mathfrak{m}}}$. The $2\mathfrak{m}$ branches 
satisfying $k^\pm(\nu) = k_0 \pm \sqrt{\lambda\nu/2} + \mathcal{O}(\nu)$ admit the 
refined expansion
\begin{align}\label{equ: k^i,pm expand}
    k^{i,\pm}(\nu) = k_0 \pm \sqrt{\frac{\lambda}{2}\,\nu} 
    + \frac{\al_{j_i}^\top B_{j_i}\al_{j_i}}
           {4\,\al_{j_i}^\top \mathcal{L}_{j_i}\al_{j_i}}\,\nu 
    + \mathcal{O}(\nu^{3/2}), \qquad i = 1, 2, \ldots, \mathfrak{m},
\end{align}
where $(\lambda, \al_{j_i})$ is an eigenpair of $C_{j_i}$,
\[
    B_j = \operatorname{diag}\!\left\{
        \cot(t_{a_j-1}k_0)\,\chi_{a_j},\; 0, \ldots, 0,\;
        \cot(t_{b_j+1}k_0)\,\chi_{b_j}
    \right\},
\]
with the convention $\cot(t_j k_0) := -\mathrm{i}$ for $j = 0$ or $j = 2N$. 
Here $\mathcal{L}_j$ is defined in~\eqref{equ: C_j decomposition}, $a_j, b_j$ are 
the endpoints of $\mathcal{I}_j$, and
\[
    \chi_j = \begin{cases}
        1, & j \text{ odd}, \\[4pt]
        \dfrac{1}{\lambda t_j^2}, & j \text{ even}.
    \end{cases}
\]
\end{theorem}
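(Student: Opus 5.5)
We argue from the Newton polygon of Theorem~\ref{thm: asy of zeros}, now tracking the coefficients of the edge of slope $-1/2$ exactly. The substitution is $z = k-k_0 = \nu^{1/2} w$. Since $g_{ql}\neq 0$ only for $q\ge S(l)$, and $S(l)=n-2l$ for $0\le l\le m$, we get
\[
\nu^{-(n-2m)/2}\, z^{0}\cdot \frac{g(k_0+\nu^{1/2}w;\nu)}{\nu^{m}} \;=\; h_0(w) + \nu^{1/2} h_1(w) + \mathcal{O}(\nu).
\]
Here $h_0(w)=\sum_{l=0}^m g_{n-2l,l}\, w^{n-2l}$, and this is the limit along the edge. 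The $l=m+1$ term contributes $g_{m+1}\nu^{m+1}z^{n-2m-1}$, which scales like $\nu^{m+1+(n-2m-1)/2}=\nu^{m+1/2}\cdot\nu^{(n-2m)/2}$. So it enters $h_1$ together with the $z^{n-2l+1}$ corrections to $g_l$.

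From Theorem~\ref{thm: G_{I,l} expand} and Theorem~\ref{thm: C(k) nonzero eigenvalue}, the leading coefficients are $g_{n-2l,l}=C_1 2^l(-2\mathrm{i})^{2N-1-2l} C_{I,l}$. Hence
\[
h_0(w)=C_1(-2\mathrm{i})^{2N-1} w^{n-2m}\sum_{l}C_{I,l}\bigl(-\tfrac{2}{4}\bigr)^{l}\cdots,
\]
which, after factoring powers and setting $\mu = 2w^2$, is a nonzero multiple of $w^{n-2m}P_I(2w^2)$. Its nonzero roots are $w=\pm\sqrt{\lambda_j/2}$. Since $P_I=\prod_j P_j$, each $P_j$ has simple roots by Lemma~\ref{thm: eig of C}(i), and a root $\lambda$ of multiplicity $\mathfrak m$ in $P_I$ splits as simple roots of $\mathfrak m$ distinct factors $P_{j_i}$. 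The leading term $k_0\pm\sqrt{\lambda\nu/2}$ then follows at once. When $\lambda$ is simple, $w_0$ is a simple root of $h_0$, so the implicit function theorem in $(w,\nu^{1/2})$ gives analyticity in $\nu^{1/2}$ and the next coefficient $w_1=-h_1(w_0)/h_0'(w_0)$.

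\textbf{Computing $w_1$.} We write $h_1$ using the $z$-correction terms of Theorem~\ref{thm: G_{I,l} expand} and Lemma~\ref{thm: g_m+1 expand}. The terms $C_2 z$ multiply $h_0$ and vanish at $w_0$. The remaining terms are sums over $j$ of $\cot(t_{a_j-1}k_0)t_{a_j}^{-1}$ times the constants with $a_j$ removed from $I$, plus the analogous $b_j$ terms. These constants factor blockwise: block $j$ has its $c_{a_j,b_j,\cdot}$ replaced by $c_{a_j+1,b_j,\cdot}$, which is the polynomial $P_1$ of $\widetilde C_1$ in Lemma~\ref{thm: eig of C}(iii). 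The $g_{m+1}$ contribution supplies the missing terms for odd-length blocks, where the degrees of $P_1$ and $P$ match differently. Evaluating at $2w_0^2=\lambda$, every factor $P_{j'}$ with $j'\ne j_i$ cancels between numerator and denominator, since $h_0'(w_0)$ is proportional to $P_{j_i}'(\lambda)\prod_{j'\neq j_i}P_{j'}(\lambda)$. We are left with
\[
w_1\propto \cot(t_{a-1}k_0)\frac{P_1(\lambda)}{t_a P'(\lambda)}+\cot(t_{b+1}k_0)\frac{P_2(\lambda)}{t_bP'(\lambda)}.
\]
Lemma~\ref{thm: eig of C}(iii) converts these ratios into $\alpha_1^2\chi_a/\al^\top\mathcal L\al$ and $\alpha_l^2\chi_b/\al^\top\mathcal L\al$. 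The parity-dependent factors $t_a^{2\xi-1}\lambda^{\eta(2\xi-1)}$ together with $1/t_a$ reproduce $\chi$, giving $\al^\top B\al/(4\al^\top\mathcal L\al)$.

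\textbf{Multiple eigenvalue case.} Here $w_0$ is a root of $h_0$ of multiplicity $\mathfrak m$, so we cannot apply the implicit function theorem directly. We would refine the Newton polygon in $(w-w_0,\nu^{1/2})$ and expect an edge of slope $1$. Because $h_0$ and the relevant part of $h_1$ factor over blocks, the restricted polynomial should be $\prod_i\bigl(c_i(w-w_0)+d_i\nu^{1/2}\bigr)$ to the needed order. This gives $\mathfrak m$ distinct slopes $w_1^{(i)}$, one per block, each computed as in the simple case with $C_{j_i}$. Analyticity in $\nu^{1/2}$ of each branch then follows when the $w_1^{(i)}$ are distinct; otherwise we expect to need a careful product argument.

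\textbf{Main obstacle.} The hardest step is the bookkeeping of $h_1$. We must check that the cross-block structure of the $z^{n-2l+1}$ terms, with their $\pm$ parity sign conventions, the $\cot(t_0k_0)=-\mathrm{i}$ boundary convention, and the $g_{m+1}$ term for odd blocks, factors exactly into $\prod_{j'\ne j}P_{j'}\cdot(\text{block-}j\text{ boundary polynomial})$. That exact factorization is what the reduction to Lemma~\ref{thm: eig of C}(iii) depends on.
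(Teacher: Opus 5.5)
Your simple-eigenvalue argument is essentially the paper's Steps~1--2. It uses the same rescaling $z=\nu^{1/2}w$, the same edge polynomial proportional to $w^{n-2m}P_I(2w^2)$, and the implicit function theorem at a simple root. It then evaluates $-h_1(w_0)/h_0'(w_0)$ through the blockwise factorisation $P_{I\setminus\{a_j\}}=\prod_{j'\neq j}P_{j'}\cdot\widetilde P_j$ and Lemma~\ref{thm: eig of C}(iii), as the paper does.

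The genuine gap is the degenerate case $\mathfrak m\ge 2$. The statement covers it in full: the $\mathcal{O}(\nu)$ error, the refined $\nu$-coefficient, and analyticity in $\nu^{1/2}$ for every branch. There $w_0=\pm\sqrt{\lambda/2}$ is a root of $h_0$ of multiplicity $\mathfrak m$, so a priori the nearby roots only satisfy $w-w_0=\mathcal{O}(\nu^{1/(2\mathfrak m)})$. Your blockwise factorisation does show that $h_1$ vanishes to order at least $\mathfrak m-1$ at $w_0$. However, the slope-one edge you want, and a fortiori the factorisation $\prod_i\bigl(c_i(w-w_0)+d_i\nu^{1/2}\bigr)$, also need the vanishing orders and leading Taylor coefficients at $w_0$ of $h_2,\dots,h_{\mathfrak m}$. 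These are the $z^{n-2l+r}\nu^l$ coefficients of $g$ with $r\ge 2$, including contributions from $g_{m+2},g_{m+3},\dots$. Theorem~\ref{thm: G_{I,l} expand} and Lemma~\ref{thm: g_m+1 expand} supply only the first correction beyond the edge, so nothing available supports that factorisation. Even granting it, coincident slopes really occur. In setting~\eqref{equ: bm t setting3} the blocks $\mathcal{I}_1$ and $\mathcal{I}_3$ give the same pair $(\lambda,d)$, so your edge polynomial has a repeated root. Then neither the $\mathcal{O}(\nu^{3/2})$ remainder nor analyticity in $\nu^{1/2}$ follows, as you concede.

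The paper never analyses the degenerate edge directly; it reduces to the simple case. It fixes distinct $\beta_1,\dots,\beta_{\mathfrak m}$ and replaces $t_s$ by $t_s/(1+\varepsilon\beta_i)$ for $s\in\mathcal{I}_{j_i}$. For small $\varepsilon>0$, $\mathcal{I}_{j_i}$ is then the only resonant block at $k_0(1+\varepsilon\beta_i)$. Its matrix is $(1+\varepsilon\beta_i)^2C_{j_i}$, with $(1+\varepsilon\beta_i)^2\lambda$ a simple eigenvalue. Step~2 applies to each perturbed problem. The branches at $\varepsilon=0$ and their Taylor coefficients in $\nu^{1/2}$ are recovered as limits via Rouch\'e, Montel and Cauchy's formula. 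This route needs no information on $g$ beyond first order and no distinctness of the $d_i$.

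If you adopt it, note that the Montel step needs the branches $k^{i,\pm}(\cdot,\varepsilon)$ to be analytic in $\nu^{1/2}$ on a disc independent of $\varepsilon$. The implicit function theorem in Step~2 a priori gives a radius that may shrink as $\varepsilon\to 0$, since the other blocks resonate at distance $\mathcal{O}(\varepsilon)$. That uniformity has to be established, not assumed.
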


\begin{proof}
Expanding $$g(k_0+z;\nu) = \sum_{j=0}^\infty \sum_{l=0}^{2N} c_{jl}\,z^j\nu^l,$$
the Newton polygon in Figure~\ref{fig:lower_boundary_k0_general} shows that 
$c_{jl} = 0$ whenever $(j,l)$ lies strictly below the line segments $l_1$ and 
$l_2$. With the notation of Theorem~\ref{thm: C(k) nonzero eigenvalue}, 
Expansion (\ref{equ:G_expession}) and Theorem~\ref{thm: G_{I,l} expand} give
\[
    c_{n-2l,\,l} = C_1 \cdot 2^l(-2\mathrm{i})^{2N-1-2l} C_{I,l}, 
    \quad 0 \leq l \leq m, \qquad
    c_{n+1,\,0} = C_1 C_2 (-2\mathrm{i})^{2N-1},
\]
and, for $1 \leq l \leq m$,
\begin{multline*}
     c_{n-2l+1,\,l} = C_1 \cdot 2^l(-2\mathrm{i})^{2N-1-2l}
    \bigg[C_2 C_{I,l} \\ + \sum_{j=1}^p \left(
        \frac{\cot(t_{a_j-1}k_0)}{t_{a_j}} C_{I\setminus\{a_j\},\,l-1}
        + \frac{\cot(t_{b_j+1}k_0)}{t_{b_j}} C_{I\setminus\{b_j\},\,l-1}
    \right)\bigg],
\end{multline*}
where $C_{I,l}$ is defined by (\ref{equ: C_I,q def}). If $\#T^o > 0$, Lemma~\ref{thm: g_m+1 expand} yields
\begin{multline}\label{equ: c_n-2m-1,m+1}
    c_{n-2m-1,\,m+1} = (-1)^{\sum_{j\in I}m_j}
    \prod_{j\notin I}\sin(t_jk_0) \\ \prod_{j\in T^o}T^o_j
    \cdot 2^{m+1}(-2\mathrm{i})^{2N-2m-3}
    \sum_{j\in T^o}\frac{\cot(t_{a_j-1}k_0)+\cot(t_{b_j+1}k_0)}{T^o_j}.
\end{multline}
If $\#T^o = 0$, the edge $l_1$ degenerates and the proof is analogous; we therefore restrict to $\#T^o > 0$. The proof will be divided into three steps. 

\medskip
\noindent
\textbf{Step 1: Leading-order expansion.}
The edge $l_2$ gives $2m$ zeros $z \sim c\nu^{1/2}$ with $c \neq 0$. The balance 
equation is
\[
    \Phi(c) := \sum_{(j,l)\in l_2\cap\mathbb{Z}^2} c_{jl}\,c^j = 0.
\]
Recalling $P_I(\lambda)$ from Theorem~\ref{thm: C(k) nonzero eigenvalue}, a direct 
computation gives
\begin{equation}\label{equ: Phi(c) def}
    \Phi(c) = \sum_{l=0}^m c_{n-2l,\,l}\,c^{n-2l}
    = C_1\, 2^m(-2\mathrm{i})^{2N-1-2m} \cdot c^{n-2m} P_I(2c^2).
\end{equation}
The nonzero roots of $\Phi(c)$ are therefore
\[
    c_j^\pm = \pm\sqrt{\frac{\lambda_j}{2}}, \qquad 1 \leq j \leq m,
\]
and the $2m$ zeros of $g(k;\nu)$ satisfy
\[
    k_j^\pm(\nu) = k_0 \pm \sqrt{\frac{\lambda_j}{2}\,\nu} + o(\nu^{1/2}),
    \qquad 1 \leq j \leq m.
\]

\noindent
\textbf{Step 2: Higher-order expansion, simple case $\mathfrak{m} = 1$.}
Here $\lambda$ is a simple eigenvalue of a unique submatrix $C_{j_1}$ of 
$\mathcal{C}(k_0)$. Set
\[
    h^\pm\!\left(\alpha;\nu^{1/2}\right)
    = \nu^{-n/2}\,g\!\left(k_0 + \nu^{1/2}\!\left(\pm\sqrt{\tfrac{\lambda}{2}}
    + \alpha\right);\nu\right),
    \qquad
    \widetilde{\Phi}(c) := \sum_{l=0}^{m+1} c_{n-2l+1,\,l}\,c^{n-2l+1}.
\]
Since $\Phi(\pm\sqrt{\lambda/2}) = 0$, expanding $h^{\pm}$ gives
\[
    h^\pm\!\left(\alpha;\nu^{1/2}\right)
    = \Phi'\!\left(\pm\sqrt{\tfrac{\lambda}{2}}\right)\alpha
    + \widetilde{\Phi}\!\left(\pm\sqrt{\tfrac{\lambda}{2}}\right)\nu^{1/2}
    + \sum_{j+l\geq 2} d^\pm_{jl}\,\alpha^j\nu^{l/2}.
\]
The function $h^\pm(\alpha;\nu^{1/2})$ is analytic in $\alpha$ and $\nu^{1/2}$, 
with $h^\pm(0;0) = 0$. Since $\lambda$ is simple, \eqref{equ: Phi(c) def} gives
\[
    \frac{\partial h^\pm}{\partial\alpha}(0;0)
    = \Phi'\!\left(\pm\sqrt{\tfrac{\lambda}{2}}\right)
    = C_1 \cdot 2^{m+2}(-2\mathrm{i})^{2N-1-2m}
    \left(\pm\sqrt{\tfrac{\lambda}{2}}\right)^{n-2m+1} P_I'(\lambda) \neq 0.
\]
By the implicit function theorem, there exists a unique analytic function 
$\alpha_\pm(\nu^{1/2})$ near $\nu = 0$ such that 
$h^\pm(\alpha_\pm(\nu^{1/2});\nu^{1/2}) = 0$ and $\alpha_\pm(0) = 0$. To find 
$\alpha_\pm'(0)$, we use the nonzero characteristic polynomials 
$P_{I\setminus\{a_j\}}(\lambda)$ from Theorem~\ref{thm: C(k) nonzero eigenvalue}. 
Their degrees satisfy
\[
    \deg P_{I\setminus\{a_j\}} = \begin{cases} m-1, & j \in T^e, \\ m, & j \in T^o, \end{cases}
\]
and for $j \in T^o$, \eqref{equ: C_I,q def} gives
\[
    C_{I\setminus\{a_j\},\,m} = \frac{t_{a_j}}{T^o_j}
    \cdot \frac{\prod_{s\in T^o} T^o_s}{\prod_{s\in I} t_s},
\]
with an analogous formula for $C_{I\setminus\{b_j\},m}$. Combining with 
\eqref{equ: c_n-2m-1,m+1},
\[
    c_{n-2m+1,\,m+1} = C_1 \cdot 2^{m+1}(-2\mathrm{i})^{2N-2m-3}
    \sum_{j\in T^o}\!\left(
        \frac{\cot(t_{a_j-1}k_0)}{t_{a_j}} C_{I\setminus\{a_j\},\,m}
        + \frac{\cot(t_{b_j+1}k_0)}{t_{b_j}} C_{I\setminus\{b_j\},\,m}
    \right).
\]
Collecting all coefficients $c_{n-2l+1,\,l}$ for $0 \leq l \leq m+1$,
\begin{align*}
    \widetilde{\Phi}(c)
    &= C_1 C_2 \cdot c\,\Phi(c) \\
    &\quad + C_1\, 2^{m+1}(-2\mathrm{i})^{2N-3-2m} \cdot c^{n-1-2m}
    \sum_{j\in T^o}\!\left(
        \frac{\cot(t_{a_j-1}k_0)}{t_{a_j}} P_{I\setminus\{a_j\}}(2c^2)
        + \frac{\cot(t_{b_j+1}k_0)}{t_{b_j}} P_{I\setminus\{b_j\}}(2c^2)
    \right) \\
    &\quad + C_1\, 2^m(-2\mathrm{i})^{2N-1-2m} \cdot c^{n+1-2m}
    \sum_{j\in T^e}\!\left(
        \frac{\cot(t_{a_j-1}k_0)}{t_{a_j}} P_{I\setminus\{a_j\}}(2c^2)
        + \frac{\cot(t_{b_j+1}k_0)}{t_{b_j}} P_{I\setminus\{b_j\}}(2c^2)
    \right).
\end{align*}
Define, for $j = 1, \ldots, p$,
\[
    \widetilde{C}_j = C^{1-\xi_j,\eta_j}(\theta_{a_j+1},\ldots,\theta_{b_j-1}),
    \qquad
    \widetilde{\widetilde{C}}_j=C^{\xi_j,1-\eta_j}(\theta_{a_j},\cdots,\theta_{b_j-2}),
\]
and 
\begin{equation*}
       \widetilde{P}_j(\lambda) = \sum_{q=0}^{\lfloor(n_j-1)/2\rfloor}
    c_{a_j+1,b_j,q}(-\lambda)^{\lfloor(n_j-1)/2\rfloor-q},
    \quad\widetilde{\widetilde{P}}_j(\lambda) = \sum_{q=0}^{\lfloor(n_j-1)/2\rfloor}
    c_{a_j,b_j-1,q}(-\lambda)^{\lfloor(n_j-1)/2\rfloor-q}.
\end{equation*}
Since $\lambda$ is a simple eigenvalue of $C_{j_1}$ and not of $C_j$ for $j \neq j_1$,
\[
    P_{I\setminus\{a_j\}}(\lambda) = \begin{cases}
        0, & j \neq j_1, \\
        \prod_{j\neq j_1} P_j(\lambda)\cdot\widetilde{P}_{j_1}(\lambda), & j = j_1,
    \end{cases}
    \qquad
    P_I'(\lambda) = \prod_{j\neq j_1} P_j(\lambda)\cdot P_{j_1}'(\lambda),
\]
and analogously for $P_{I\setminus\{b_j\}}(\lambda)$. Using $\Phi(\pm\sqrt{\lambda/2})=0$,
\begin{multline*}
     \widetilde{\Phi}\!\left(\pm\sqrt{\tfrac{\lambda}{2}}\right) 
    = C_1\,2^{m+2}(-2\mathrm{i})^{2N-3-2m} \times
   \\ \left(\pm\sqrt{\tfrac{\lambda}{2}}\right)^{n+1-2m}
    \widetilde{\chi}_{j_1}\left(
        \frac{\cot(t_{a_{j_1}-1}k_0)}{t_{a_{j_1}}}P_{I\setminus\{a_{j_1}\}}(\lambda)
        + \frac{\cot(t_{b_{j_1}+1}k_0)}{t_{b_{j_1}}}P_{I\setminus\{b_{j_1}\}}(\lambda)
    \right),
\end{multline*}
where $\widetilde{\chi}_j = 1/\lambda$ for $j\in T$ and $\widetilde{\chi}_j = -1$ 
for $j\in S$. Applying the implicit function theorem together with 
Lemma~\ref{thm: eig of C}(iii), 
\begin{align*}
       \alpha_\pm'(0)
    & = -\frac{\partial_{\nu^{1/2}} h^\pm(0;0)}{\partial_\alpha h^\pm(0;0)}
    = \frac{\widetilde{\chi}_{j_1}}{4P_I'(\lambda)}\left(
        \frac{\cot(t_{a_{j_1}-1}k_0)}{t_{a_{j_1}}} P_{I\setminus\{a_{j_1}\}}(\lambda)
        + \frac{\cot(t_{b_{j_1}+1}k_0)}{t_{b_{j_1}}} P_{I\setminus\{b_{j_1}\}}(\lambda)
    \right) \\
    & = \frac{\al_{j_1}^\top B_{j_1}\al_{j_1}}
           {4\,\al_{j_1}^\top \mathcal{L}_{j_1}\al_{j_1}}.
\end{align*}
Hence,
\[
    k^{1,\pm}(\nu) = k_0 \pm \sqrt{\frac{\lambda}{2}\,\nu}
    + \frac{\al_{j_1}^\top B_{j_1}\al_{j_1}}
           {4\,\al_{j_1}^\top \mathcal{L}_{j_1}\al_{j_1}}\,\nu
    + \mathcal{O}(\nu^{3/2})
\]
satisfies $g(k^{1,\pm}(\nu);\nu) = 0$ near $\nu = 0$ and is analytic in $\nu^{1/2}$.

\medskip

\noindent
\textbf{Step 3: General case $\mathfrak{m} \geq 2$.}
By Lemma~\ref{thm: eig of C}(i), $\lambda$ is a simple eigenvalue of exactly 
$\mathfrak{m}$ submatrices $C_{j_1}, \ldots, C_{j_\mathfrak{m}}$ with 
$j_1 < \cdots < j_\mathfrak{m}$. Write $\mathcal{C}(k_0, \bm{t})$ for 
$\mathcal{C}(k_0)$ to make the dependence on $\bm{t}$ explicit. Fix $\mathfrak{m}$ 
distinct real numbers $\beta_1, \ldots, \beta_\mathfrak{m}$ and define the perturbed 
vector $\bm{t}(\varepsilon) = (t_1(\varepsilon), \ldots, t_{2N-1}(\varepsilon))^\top$ by
\[
    t_s(\varepsilon) = \begin{cases}
        \dfrac{t_s}{1+\varepsilon\beta_i}, 
        & \text{if } s \in \mathcal{I}_{j_i} \text{ for some } 1 \leq i \leq \mathfrak{m}, \\[6pt]
        t_s, & \text{otherwise},
    \end{cases}
\]
and set $g(k;\nu,\varepsilon) = G(k;\nu,\varepsilon)_{2,2}$, where $G(k;\nu,\varepsilon)$ 
is defined as in \eqref{equ: G(k;delta) expand} with $t_j$ replaced by $t_j(\varepsilon)$.  
For $\varepsilon > 0$ sufficiently small, 
$I_i(\varepsilon) := \{j : \pi \mid t_j(\varepsilon)k_0(1+\varepsilon\beta_i)\}$ 
equals $\mathcal{I}_{j_i}$ for each $i$. By Theorem~\ref{thm: C_j's}, 
$\mathcal{C}(k_0(1+\varepsilon\beta_i), \bm{t}(\varepsilon))$ has the same nonzero 
eigenvalues as $\widetilde{C}_{j_i}(\varepsilon)$, where 
$\widetilde{C}_{j_i}(\varepsilon)$ denotes the principal submatrix of 
$\mathcal{C}(k_0(1+\varepsilon\beta_i), \bm{t}(\varepsilon))$ with rows and columns 
indexed from $\mathrm{Sta}(\mathcal{I}_{j_i})$ to $\mathrm{End}(\mathcal{I}_{j_i})$. 
A direct calculation gives
\[
    \widetilde{C}_{j_i}(\varepsilon) = (1+\varepsilon\beta_i)^2 C_{j_i},
    \qquad 1 \leq i \leq \mathfrak{m}.
\]
Thus, $(1+\varepsilon\beta_i)^2\lambda$ is a simple nonzero eigenvalue of $\widetilde{C}_{j_i}(\varepsilon)$ (and also $\mathcal{C}(k_0(1+\varepsilon\beta_i), \bm{t}(\varepsilon))$). Applying Step~2, we have 
\[
    k^{i,\pm}(\nu,\varepsilon) = (1+\varepsilon\beta_i)\left[
        k_0 \pm \sqrt{\frac{\lambda}{2}\,\nu}
        + \frac{\al_{j_i}^\top B_{j_i}(\varepsilon)\al_{j_i}}
               {4\,\al_{j_i}^\top \mathcal{L}_{j_i}\al_{j_i}}\,\nu
    \right] + \mathcal{O}(\nu^{3/2}), \qquad 1 \leq i \leq \mathfrak{m},
\]
with $B_j(\varepsilon) \to B_j$ as $\varepsilon \to 0$, satisfies 
$g(k^{i,\pm}(\nu,\varepsilon);\nu,\varepsilon) = 0$ near $(\nu,\varepsilon) = (0,0)$. 
Since $g(\,\cdot\,;\nu,\varepsilon) \rightrightarrows g(\,\cdot\,;0)$ on compact sets 
as $(\nu,\varepsilon) \to (0,0)$, Rouch\'{e}'s theorem guarantees uniform boundedness 
of $\{k^{i,\pm}(\nu,\varepsilon)\}_{\varepsilon>0}$. Montel's theorem then yields a 
sequence $\varepsilon_n \to 0$ along which $k^{i,\pm}(\nu,\varepsilon_n)$ converges 
uniformly to a function $k^{i,\pm}(\nu)$ analytic in $\nu^{1/2}$. The claim below identifies the coefficients of $k^{i,\pm}(\nu)$, completing the proof.

\noindent \textbf{Claim:}
Let $f_n(u,v)$ and $f(u,v)$ be continuous on a region in $\mathbb{C}^2$ with 
$f_n \rightrightarrows f$ (\emph{i.e.}, uniform convergence). Let $u_n(v)$ and $u(v)$ be analytic near $v = 0$ with 
$f_n(u_n(v),v) \equiv 0$ and $u_n(v) \rightrightarrows u(v)$. Then $f(u(v),v) \equiv 0$. 
Moreover, letting 
\begin{equation*}
    u_n(v)=\sum_{m=0}^\infty c_{nm}v^m,\qquad u(v)=\sum_{m=0}^\infty c_m v^m,
\end{equation*}
we have $\lim_{n\to\infty} c_{nm} = c_m$ for every $m$.

\medskip 

\noindent \textit{Proof of Claim.} 
The first assertion follows from continuity and uniform convergence. For the 
coefficients, fix $\rho > 0$ small enough so that all $u_n$ and $u$ are analytic on 
$\{|\zeta| \leq \rho\}$; the conclusion follows from Cauchy's integral formula and 
uniform convergence on $|\zeta| = \rho$.
\end{proof}

\begin{remark} 
Theorems~\ref{thm: asy of zeros} and~\ref{thm: k(nu) expand} extend to the case 
where the wave speeds inside the resonators are not identical. Suppose that the ratio $r_i$ between the wave speed outside and inside the $i$\textsuperscript{th} resonator is positive. 
We replace definition~\eqref{equ: bm t def} by two vectors:
\[
    \mathfrak{\bm t}=(\mathfrak{t}_1, \cdots, \mathfrak{t}_{2N-1})^\top:=(r_1\ell_1,\ s_{1},\ r_2\ell_{2},\ s_{2},\ \cdots,\ r_{N-1}\ell_{N-1},\ s_{N-1},\ r_N\ell_N)^\top\in\R^{2N-1}_{>0},
\]
\[
     \bm t=(t_1, \cdots, t_{2N-1})^\top:=(r_1^2\ell_1,\ s_{1},\ r_2^2\ell_{2},\ s_{2},\ \cdots,\ r_{N-1}^2\ell_{N-1},\ s_{N-1},\ r_N^2\ell_N)^\top\in\R^{2N-1}_{>0}.
\]
and modify the definitions of $t_j(k)$ in~\eqref{equ: t_j(k) def} and $E$ 
in~\eqref{def:E} to
\[
    t_j(k) = \begin{cases}
        t_j, & \text{if } \pi \mid \mathfrak{t}_j k, \\
        \infty, & \text{otherwise},
    \end{cases}
    \qquad
    E = \bigcup_{j=1}^{2N-1} \frac{\pi}{\mathfrak{t}_j}\mathbb{Z}.
\]
With these modifications, the nonzero eigenvalues $\lambda_1, \ldots, \lambda_m$ of 
the corresponding frequency-dependent capacitance matrix $\mathcal{C}(k_0)$ govern 
the first-order approximations of the resonant frequencies. Specifically, the 
scattering problem has exactly $n$ resonant frequencies near $k_0 v$ for $k_0 \in E$, 
of which the first $2m$ satisfy
\begin{equation}\label{equ:differentriconclu1}
    \omega_j^\pm(\delta) = k_0 v \pm v\sqrt{\delta\lambda_j} + \mathcal{O}(\delta),
    \qquad j = 1, \ldots, m,
\end{equation}
with a higher-order expansion followed by the same argument as in 
Theorem~\ref{thm: k(nu) expand}, while the remaining $n - 2m$ satisfy
\begin{equation}\label{equ:differentriconclu2}
    \omega_j(\delta) = k_0 v + \mathcal{O}(\delta), \qquad j = 1, \ldots, n-2m.
\end{equation}
Since $\mathcal{C}(k_0)$ can be symmetrized, it is always diagonalizable when the 
$r_i$ are real and positive. If the $r_i$ are allowed to be complex, $\mathcal{C}(k_0)$ 
need not be diagonalizable; nevertheless, conclusions~\eqref{equ:differentriconclu1} 
and~\eqref{equ:differentriconclu2} remain valid, with a proof identical to that of 
Theorems~\ref{thm: asy of zeros} and~\ref{thm: k(nu) expand}. We refer 
to~\cite{alex} for a further discussion.
\end{remark}


\section{Characterization of eigenmodes} \label{sec:6}

In this section, we study the eigenmodes beyond the subwavelength regime, that is, 
for $k_0 \neq 0$; the subwavelength case has been treated in detail 
in~\cite{feppon.cheng.ea2023Subwavelength, pm1}. Without loss of generality, we 
set $r = 1$ throughout.

Roughly speaking, we will show that the eigenmode $u(x)$ corresponding to a resonant frequency 
$k(\delta) = k_0 + \mathcal{O}(\delta^{1/2})$ is approximated, to leading order $\mathcal{O}(\delta^{1/2})$, by trigonometric functions on specific spacings between resonators and vanishes 
elsewhere. The amplitudes of these  trigonometric functions are determined by the eigenvector of the matrix $D$ \eqref{equ: matrix D def} 
introduced in \Cref{sec: block matrices}. We derive these results using a propagation matrix approach; throughout, we write $'$ in place of $d/dx$ to denote 
differentiation with respect to $x$.

Assume that $u(x)$ is a solution to the equation $u''+k^2 u=0$ with $k\ne0$, then
\begin{align}\label{equ: P(k,a)}
\begin{pmatrix}
    u(x+a)\\u'(x+a)
\end{pmatrix}=P(k,a)\begin{pmatrix}
    u(x)\\u'(x)
\end{pmatrix},\forall x,a\in\R, \q P(k,a) = \begin{pmatrix}
    \cos{ka}&\frac{1}{k}\sin{ka}\\
    -k\sin{ka}&\cos{ka}
\end{pmatrix}.
\end{align}
Assume that $k(\delta)=k_0\pm\sqrt{\lambda\delta}+\mc{O}(\delta)$ is a resonant frequency. If $\pi\nmid k_0a$, then
\begin{align}\label{equ: P(k,a) expand non}
P(k,a)=P(k_0,a)+\mc{O}(\delta^{1/2}).    
\end{align}
Otherwise, assume that $k_0a=\mathfrak{m}\pi$ for some $\mathfrak{m}\in\Z$. Then
\begin{align}\label{equ: P(k,a) expand res}
(-1)^\mathfrak{m}P(k,a)=I\pm\sqrt{\lambda}a\begin{pmatrix}
    &\frac{1}{k_0}\\-k_0
\end{pmatrix}\delta^{1/2}+\mc{O}(\delta).    
\end{align}

\subsection{Simple eigenvalue case}
We investigate the case where $\lambda$ is a simple eigenvalue of $\mathcal{C}(k_0)$. Recalling the setup of Theorem~\ref{thm: eigenmode simple case}, $\lambda$ is a 
simple eigenvalue of a unique submatrix $C_{j^*}$ and is not an eigenvalue of  $C_j$ for any $j \neq j^*$. The $s$ even integers in $\mathcal{I}_{j^*}$ are 
$2\lceil a_{j^*}/2\rceil,\, 2\lceil a_{j^*}/2\rceil + 2,\, \ldots,\, 
2\lfloor b_{j^*}/2\rfloor$. We also recall that $j_0 = 2\lceil a_{j^*}/2\rceil - 2$, and $C_{j^*}$, $D_{j^*}$ are defined in~\eqref{equ: C_j decomposition} and~\eqref{equ: D_j def}, respectively. 

We are now ready to prove \Cref{thm: eigenmode simple case}.

\begin{proof}[Proof of Theorem \ref{thm: eigenmode simple case}.]
To avoid a cumbersome case-by-case analysis of endpoint parities of the intervals 
$\mathcal{I}_j = \llbracket a_j, b_j \rrbracket$, we present the proof for the case 
where all $(a_j, b_j)$ have parity (even, even); the remaining cases follow 
analogously. 

Set $b_0 = 0$, $a_{p+1} = 2N$, and write $a_j = 2\hat{a}_j$, $b_j = 2\hat{b}_j$ with 
integers satisfying
\[
    0 = \hat{b}_0 < \hat{a}_1 < \hat{b}_1 < \hat{a}_2 < \hat{b}_2 < \cdots 
    < \hat{a}_p < \hat{b}_p < \hat{a}_{p+1} = N.
\]
We use $a_j, b_j$ and $2\hat{a}_j, 2\hat{b}_j$ interchangeably. We analyze $u(x)$ on 
the intervals $[x_{b_j+1}, x_{a_{j+1}}]$ ($0 \leq j \leq p$) and 
$[x_{a_j}, x_{b_j+1}]$ ($1 \leq j \leq p$) by considering the following five 
regions (see Figure~\ref{fig:regions}):
\begin{enumerate} 
    \setlength{\itemsep}{0pt}
    \item Non-resonant intervals on the left: $[x_{b_j+1}, x_{a_{j+1}}]$, 
    $0 \leq j < j^*$;
    \item Resonant intervals on the left: $[x_{a_j}, x_{b_j+1}]$, 
    $1 \leq j < j^*$;
    \item The target resonant interval: $[x_{a_{j^*}}, x_{b_{j^*}+1}]$;
    \item Non-resonant intervals on the right: $[x_{b_j+1}, x_{a_{j+1}}]$, 
    $j^* \leq j \leq p$;
    \item Resonant intervals on the right: $[x_{a_j}, x_{b_j+1}]$, 
    $j^* < j \leq p$.
\end{enumerate}

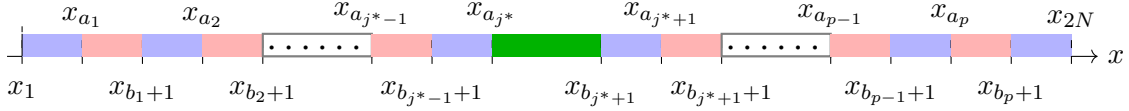
\begin{figure}[!htb]
\centering
\begin{tikzpicture}
    \def\w{0.8}   
    \def\wl{1.45} 

    \def\x{0}

    \draw[->] (-0.2,0) -- (14.3,0) node[right] {$x$};

    \draw[dashed] (0,-0.1) -- (0,0.4);
    \node[below] at (0,-0.2) {$x_1$};

    \setcounter{pt}{1}

    \fill[blue!30] (\x,0) rectangle ++(\w,0.3);
    \draw (\x,0) -- (\x,0.3);
    \pgfmathsetmacro\x{\x+\w}
    \stepcounter{pt}
    \draw[dashed] (\x,-0.1) -- (\x,0.3);
    \ifodd\value{pt}
        \node[below] at (\x,-0.2) {$x_{1}$};
    \else
        \node[above] at (\x,0.25) {$x_{a_1}$};
    \fi

    \fill[red!30] (\x,0) rectangle ++(\w,0.3);
    \pgfmathsetmacro\x{\x+\w}
    \stepcounter{pt}
    \draw[dashed] (\x,-0.1) -- (\x,0.3);
    \ifodd\value{pt}
        \node[below] at (\x,-0.2) {$x_{b_1+1}$};
    \else
        \node[above] at (\x,0.25) {$ $};
    \fi

    \fill[blue!30] (\x,0) rectangle ++(\w,0.3);
    \pgfmathsetmacro\x{\x+\w}
    \stepcounter{pt}
    \draw[dashed] (\x,-0.1) -- (\x,0.3);
    \ifodd\value{pt}
        \node[below] at (\x,-0.2) {$ $};
    \else
        \node[above] at (\x,0.25) {$x_{a_2}$};
    \fi

    \fill[red!30] (\x,0) rectangle ++(\w,0.3);
    \pgfmathsetmacro\x{\x+\w}
    \stepcounter{pt}
    \draw[dashed] (\x,-0.1) -- (\x,0.3);
    \ifodd\value{pt}
        \node[below] at (\x,-0.2) {$x_{b_2+1}$};
    \else
        \node[above] at (\x,0.25) {$ $};
    \fi

    \draw[gray, thick] (\x,0) rectangle ++(\wl,0.3);
    \node at (\x + \wl/2, 0.1) {\Large $\cdots\cdots$}; 
    \pgfmathsetmacro\x{\x+\wl}
    \stepcounter{pt}
    \draw[dashed] (\x,-0.1) -- (\x,0.3);
    \ifodd\value{pt}
        \node[below] at (\x,-0.2) {$x_{a_{j^*-1}}$};
    \else
        \node[above] at (\x,0.25) {$x_{a_{j^*-1}}$};
    \fi

    \fill[red!30] (\x,0) rectangle ++(\w,0.3);
    \pgfmathsetmacro\x{\x+\w}
    \stepcounter{pt}
    \draw[dashed] (\x,-0.1) -- (\x,0.3);
    \ifodd\value{pt}
        \node[below] at (\x,-0.2) {$x_{b_{j^*-1}+1}$};
    \else
        \node[above] at (\x,0.25) {$x_{b_{j^*-1}+1}$};
    \fi

    \fill[blue!30] (\x,0) rectangle ++(\w,0.3);
    \pgfmathsetmacro\x{\x+\w}
    \stepcounter{pt}
    \draw[dashed] (\x,-0.1) -- (\x,0.3);
    \ifodd\value{pt}
        \node[below] at (\x,-0.2) {$x_{a_{j^*}}$};
    \else
        \node[above] at (\x,0.25) {$x_{a_{j^*}}$};
    \fi

    \fill[green!70!black] (\x,0) rectangle ++(\wl,0.3);
    \pgfmathsetmacro\x{\x+\wl}
    \stepcounter{pt}
    \draw[dashed] (\x,-0.1) -- (\x,0.3);
    \ifodd\value{pt}
        \node[below] at (\x,-0.2) {$x_{b_{j^*+1}}$};
    \else
        \node[above] at (\x,0.25) {$x_{b_{j^*+1}}$};
    \fi

    \fill[blue!30] (\x,0) rectangle ++(\w,0.3);
    \pgfmathsetmacro\x{\x+\w}
    \stepcounter{pt}
    \draw[dashed] (\x,-0.1) -- (\x,0.3);
    \ifodd\value{pt}
        \node[below] at (\x,-0.2) {$x_{a_{j^*+1}}$};
    \else
        \node[above] at (\x,0.25) {$x_{a_{j^*+1}}$};
    \fi

    \fill[red!30] (\x,0) rectangle ++(\w,0.3);
    \pgfmathsetmacro\x{\x+\w}
    \stepcounter{pt}
    \draw[dashed] (\x,-0.1) -- (\x,0.3);
    \ifodd\value{pt}
        \node[below] at (\x,-0.2) {$x_{b_{j^*+1}+1}$};
    \else
        \node[above] at (\x,0.25) {$x_{b_{j^*+1}+1}$};
    \fi

    \draw[gray, thick] (\x,0) rectangle ++(\wl,0.3);
    \node at (\x + \wl/2, 0.1) {\Large $\cdots\cdots$}; 
    \pgfmathsetmacro\x{\x+\wl}
    \stepcounter{pt}
    \draw[dashed] (\x,-0.1) -- (\x,0.3);
    \ifodd\value{pt}
        \node[below] at (\x,-0.2) {$x_{a_{p-1}}$};
    \else
        \node[above] at (\x,0.25) {$x_{a_{p-1}}$};
    \fi

    \fill[red!30] (\x,0) rectangle ++(\w,0.3);
    \pgfmathsetmacro\x{\x+\w}
    \stepcounter{pt}
    \draw[dashed] (\x,-0.1) -- (\x,0.3);
    \ifodd\value{pt}
        \node[below] at (\x,-0.2) {$x_{b_{p-1}+1}$};
    \else
        \node[above] at (\x,0.25) {$x_{b_{p-1}+1}$};
    \fi

    \fill[blue!30] (\x,0) rectangle ++(\w,0.3);
    \pgfmathsetmacro\x{\x+\w}
    \stepcounter{pt}
    \draw[dashed] (\x,-0.1) -- (\x,0.3);
    \ifodd\value{pt}
        \node[below] at (\x,-0.2) {$x_{a_p}$};
    \else
        \node[above] at (\x,0.25) {$x_{a_p}$};
    \fi

    \fill[red!30] (\x,0) rectangle ++(\w,0.3);
    \pgfmathsetmacro\x{\x+\w}
    \stepcounter{pt}
    \draw[dashed] (\x,-0.1) -- (\x,0.3);
    \ifodd\value{pt}
        \node[below] at (\x,-0.2) {$x_{b_p+1}$};
    \else
        \node[above] at (\x,0.25) {$x_{b_p+1}$};
    \fi

    \fill[blue!30] (\x,0) rectangle ++(\w,0.3);
    \pgfmathsetmacro\x{\x+\w}
    \stepcounter{pt}
    \draw[dashed] (\x,-0.1) -- (\x,0.3);
    \ifodd\value{pt}
        \node[below] at (\x,-0.2) {$x_{2N}$};
    \else
        \node[above] at (\x,0.25) {$x_{2N}$};
    \fi

\end{tikzpicture}
\caption{Schematic illustration of non-resonant intervals (blue), resonant 
intervals (red), and the target resonant interval (green), with $j < j^*$ on 
the left and $j > j^*$ on the right.}
\label{fig:regions}
\end{figure}

Throughout Steps~1--6, we work to leading order in $\delta$ and write $a\delta^\gamma$ 
to denote a quantity with asymptotic expansion $a\delta^\gamma + \mathcal{O}(\delta^{\gamma+1/2})$. 

\medskip
\noindent\textbf{Step 1: Non-resonant propagation from $x_1$ to $x_{a_1}$.}
For $x < x_1$, up to a constant factor, $u(x) = \delta^{-1} \e^{-\mathrm{i}kx}$; 
we fix this expression and choose the constant factor later. The initial data at 
$x_1$ are
\[
    (u(x_1), u'_-(x_1))^\top = (\alpha_1\delta^{-1}, \beta_1\delta^{-1})^\top,
    \qquad \alpha_1 = \e^{-\mathrm{i}k_0x_1},\quad \beta_1 = -\mathrm{i}k_0 \e^{-\mathrm{i}k_0x_1}.
\]
For $1 \leq i < a_1$, we have $\pi \nmid k_0 t_i$, so~\eqref{equ: P(k,a) expand non} 
applies. The jump relation (see the fourth and fifth equations in (\ref{equ: scattering problem})) at $x_1$ gives 
$(u(x_1), u'_+(x_1))^\top = (\alpha_1\delta^{-1}, \beta_1)^\top$, and applying 
\eqref{equ: P(k,a) expand non} yields
\[
    \begin{pmatrix} u(x_2) \\ u'_-(x_2) \end{pmatrix}
    = P(k, t_1) \begin{pmatrix} u(x_1) \\ u'_+(x_1) \end{pmatrix}
    = \begin{pmatrix} \alpha_2\delta^{-1} \\ \beta_2\delta^{-1} \end{pmatrix},
    \qquad
    \begin{cases} \alpha_2 = \alpha_1\cos(k_0 t_1), \\ \beta_2 = -\alpha_1 k_0\sin(k_0 t_1). \end{cases}
\]
Continuing this alternating application of jump relations and~\eqref{equ: P(k,a) 
expand non}, we obtain the general pattern
\[
    \begin{pmatrix} u(x_{2i-1}) \\ u'_+(x_{2i-1}) \end{pmatrix}
    = \begin{pmatrix} \alpha_{2i-1}\delta^{-i} \\ \beta_{2i-1}\delta^{-i+1} \end{pmatrix},
    \qquad
    \begin{pmatrix} u(x_{2i}) \\ u'_+(x_{2i}) \end{pmatrix}
    = \begin{pmatrix} \alpha_{2i}\delta^{-i} \\ \beta_{2i}\delta^{-i-1} \end{pmatrix},
\]
where the sequences $(\alpha_j)_{1 \leq j \leq 2\hat{a}_1}$ and 
$(\beta_j)_{1 \leq j \leq 2\hat{a}_1}$ satisfy
\[
    \begin{cases}
        \alpha_{2i-1} = \beta_{2i-2}/k_0 \cdot \sin(k_0 t_{2i-2}), \\
        \beta_{2i-1} = \beta_{2i-2} k_0 \cos(k_0 t_{2i-2}),
    \end{cases}
    \qquad
    \begin{cases}
        \alpha_{2i} = \alpha_{2i-1}\cos(k_0 t_{2i-1}), \\
        \beta_{2i} = -\alpha_{2i-1} k_0\sin(k_0 t_{2i-1}).
    \end{cases}
\]
In particular,
\[
    \begin{pmatrix} u(x_{a_1}) \\ u'_+(x_{a_1}) \end{pmatrix}
    = \begin{pmatrix} \alpha_{2\hat{a}_1}\delta^{-\hat{a}_1} \\ 
    \beta_{2\hat{a}_1}\delta^{-\hat{a}_1-1} \end{pmatrix},
    \qquad
    \beta_{2\hat{a}_1} = \alpha_1 k_0(-1)^{\hat{a}_1}\prod_{i=1}^{2\hat{a}_1-1}\sin(k_0 t_i) \neq 0.
\]

\medskip
\noindent\textbf{Step 2: Resonant propagation on $[x_{a_1}, x_{b_1+1}]$.}
For $a_1 \leq i \leq b_1$, we have $k_0 t_i = m_i\pi$ with $m_i \in \mathbb{Z}$, 
so~\eqref{equ: P(k,a) expand res} applies. Applying it with the jump relation gives
\[
    \begin{pmatrix} u(x_{2\hat{a}_1+1}) \\ u'_+(x_{2\hat{a}_1+1}) \end{pmatrix}
    = (-1)^{m_{2\hat{a}_1}}
    \begin{pmatrix} \alpha_{2\hat{a}_1+1}\delta^{-\hat{a}_1-1/2} \\ 
    \beta_{2\hat{a}_1+1}\delta^{-\hat{a}_1} \end{pmatrix},
    \qquad
    \begin{cases}
        \alpha_{2\hat{a}_1+1} = \pm \beta_{2\hat{a}_1}/k_0 \cdot \sqrt{\lambda}\,t_{2\hat{a}_1}, \\
        \beta_{2\hat{a}_1+1} = \beta_{2\hat{a}_1}.
    \end{cases}
\]
Repeating this procedure gives, for $\hat{a}_1 + 1 \leq i \leq \hat{b}_1$,
\[
    \begin{pmatrix} u(x_{2i+1}) \\ u'_+(x_{2i+1}) \end{pmatrix}
    = (-1)^{\sum_{l=a_1}^{2i} m_l}
    \begin{pmatrix} \alpha_{2i+1}\delta^{-\hat{a}_1-1/2} \\ \beta_{2i+1}\delta^{-\hat{a}_1} \end{pmatrix},
\]
where $\alpha_{2i+1} = \alpha_{2i} \pm \beta_{2i}/k_0 \cdot \sqrt{\lambda}\,t_{2i}$ and 
$\beta_{2i+1} = \beta_{2i}$; and for $\hat{a}_1 \leq i \leq \hat{b}_1 - 1$,
\[
    \begin{pmatrix} u(x_{2i+2}) \\ u'_+(x_{2i+2}) \end{pmatrix}
    = (-1)^{\sum_{l=a_1}^{2i+1} m_l}
    \begin{pmatrix} \alpha_{2i+2}\delta^{-\hat{a}_1-1/2} \\ 
    \beta_{2i+2}\delta^{-\hat{a}_1-1} \end{pmatrix},
\]
where $\alpha_{2i+2} = \alpha_{2i+1}$ and 
$\beta_{2i+2} = \beta_{2i+1} \mp \alpha_{2i+1} k_0\sqrt{\lambda}\,t_{2i+1}$. In particular,
\[
    \begin{pmatrix} u(x_{b_1+1}) \\ u'_+(x_{b_1+1}) \end{pmatrix}
    = (-1)^{\sum_{l=a_1}^{b_1} m_l}
    \begin{pmatrix} \alpha_{2\hat{b}_1+1}\delta^{-\hat{a}_1-1/2} \\ 
    \beta_{2\hat{b}_1+1}\delta^{-\hat{a}_1} \end{pmatrix}.
\]
Set $\al_1 = (\alpha_{2\hat{a}_1+1}, a_{2\hat{a}_1+3}, \ldots, a_{2\hat{b}_1-1})^\top$ 
and $\be_1 = (\beta_{2\hat{a}_1}, \beta_{2\hat{a}_1+2}, \ldots, \beta_{2\hat{b}_1})^\top$, and 
apply Lemma \ref{thm: eig of C}(iv) with $\mu_1 = \pm\sqrt{\lambda}/k_0$ and 
$\mu_2 = \pm\sqrt{\lambda}k_0$. All equations in (1) and (2) are satisfied except 
the last row of (1). Since $\lambda$ is not an eigenvalue of $C_1$, 
Lemma~\ref{thm: eig of C}(iv) implies that this row fails, giving
\[
    \alpha_{2\hat{b}_1+1} = \alpha_{2\hat{b}_1-1} \mp \beta_{2\hat{b}_1}\sqrt{\lambda}/k_0 
    \cdot t_{2\hat{b}_1} \neq 0.
\]

\medskip
\noindent\textbf{Step 3: Propagation across remaining intervals left of $j^*$.}
For convenience, define
\[
    M_1 = \sum_{l=a_1}^{b_1} m_l, \quad A_1 = \hat{a}_1, \qquad
    M_i = M_{i-1} + \sum_{l=a_i}^{b_i} m_l, A_i = A_{i-1} + \left(\hat{a}_i - \hat{b}_{i-1} - \tfrac{1}{2}\right), 
    \quad 1 < i \leq j^*.
\]
Using the same procedures as in Steps~1 and~2, for $1 \leq j < j^*$ on the 
non-resonant intervals $[x_{b_j+1}, x_{a_{j+1}}]$,
\[
    \begin{pmatrix} u(x_{2\hat{b}_j+2i}) \\ u'_+(x_{2\hat{b}_j+2i}) \end{pmatrix}
    = (-1)^{M_j}
    \begin{pmatrix} \alpha_{2\hat{b}_j+2i}\delta^{-A_j-i+1/2} \\ 
    \beta_{2\hat{b}_j+2i}\delta^{-A_j-i-1/2} \end{pmatrix},
    \quad 1 \leq i \leq \hat{a}_{j+1} - \hat{b}_j,
\]
\[
    \begin{pmatrix} u(x_{2\hat{b}_j+2i+1}) \\ u'_+(x_{2\hat{b}_j+2i+1}) \end{pmatrix}
    = (-1)^{M_j}
    \begin{pmatrix} \alpha_{2\hat{b}_j+2i+1}\delta^{-A_j-i-1/2} \\ 
    \beta_{2\hat{b}_j+2i+1}\delta^{-A_j-i+1/2} \end{pmatrix},
    \quad 1 \leq i < \hat{a}_{j+1} - \hat{b}_j,
\]
with recurrences
\[
    \begin{cases}
    \alpha_{2\hat{b}_j+2i} = \alpha_{2\hat{b}_j+2i-1}\cos(k_0 t_{2\hat{b}_j+2i-1}), \\
    \beta_{2\hat{b}_j+2i} = -\alpha_{2\hat{b}_j+2i-1} k_0\sin(k_0 t_{2\hat{b}_j+2i-1}),
    \end{cases}
    \quad
    \begin{cases}
        \alpha_{2\hat{b}_j+2i+1} =\beta_{2\hat{b}_j+2i}/k_0 \cdot \sin(k_0 t_{2\hat{b}_j+2i}), \\
        \beta_{2\hat{b}_j+2i+1} = \beta_{2\hat{b}_j+2i}\cos(k_0 t_{2\hat{b}_j+2i}).
    \end{cases}
\]
In particular,
\[
    \begin{pmatrix} u(x_{a_{j+1}}) \\ u'_+(x_{a_{j+1}}) \end{pmatrix}
    = (-1)^{M_j}
    \begin{pmatrix} \alpha_{2\hat{a}_{j+1}}\delta^{-A_{j+1}} \\ 
    \beta_{2\hat{a}_{j+1}}\delta^{-A_{j+1}-1} \end{pmatrix},
\]
where $\beta_{2\hat{a}_{j+1}} = (-1)^{\hat{a}_{j+1}-\hat{b}_j} \alpha_{2\hat{b}_j+1} k_0 
\prod_{i=2\hat{b}_j+1}^{2\hat{a}_{j+1}-1} \sin(k_0 t_i) \neq 0$. 

On the resonant  intervals $[x_{a_j}, x_{b_j+1}]$ for $1 < j \leq j^*$, the same procedure as in Step~2 gives
\[
    \begin{pmatrix} u(x_{2i+1}) \\ u'_+(x_{2i+1}) \end{pmatrix}
    = (-1)^{M_{j-1}+\sum_{l=a_j}^{2i} m_l}
    \begin{pmatrix} \alpha_{2i+1}\delta^{-A_j-1/2} \\ \beta_{2i+1}\delta^{-A_j} \end{pmatrix},
    \quad \hat{a}_j \leq i \leq \hat{b}_j,
\]
\begin{align}\label{equ: u(x_2i+2)}
    \begin{pmatrix} u(x_{2i+2}) \\ u'_+(x_{2i+2}) \end{pmatrix}
    = (-1)^{M_{j-1}+\sum_{l=a_j}^{2i+1} m_l}
    \begin{pmatrix} \alpha_{2i+2}\delta^{-A_j-1/2} \\ \beta_{2i+2}\delta^{-A_j-1} \end{pmatrix},
    \quad \hat{a}_j \leq i \leq \hat{b}_j - 1,
\end{align}
with recurrences $\alpha_{2\hat{a}_j+1} = \pm \beta_{2\hat{a}_j}/k_0 \cdot \sqrt{\lambda}\,t_{2\hat{a}_j}$, 
$\beta_{2\hat{a}_j+1} = \beta_{2\hat{a}_j}$, and
\[
    \begin{cases}
        \alpha_{2i} = \alpha_{2i-1}, \\
        \beta_{2i} = \beta_{2i-1} \mp \alpha_{2i-1} k_0\sqrt{\lambda}\,t_{2i-1},
    \end{cases}
    \quad
    \begin{cases}
        \alpha_{2i+1} = \alpha_{2i} \pm \beta_{2i}/k_0 \cdot \sqrt{\lambda}\,t_{2i}, \\
        \beta_{2i+1} = \beta_{2i},
    \end{cases}
    \quad \hat{a}_j + 1 \leq i \leq \hat{b}_j.
\]
In particular, $\bigl(u(x_{b_j+1}), u'_+(x_{b_j+1})\bigr)^\top = 
(-1)^{M_j}\bigl(\alpha_{2\hat{b}_j+1}\delta^{-A_j-1/2},\, 
\beta_{2\hat{b}_j+1}\delta^{-A_j}\bigr)^\top$, and by the same reasoning as in Step~2, 
$\alpha_{2\hat{b}_j+1} \neq 0$ for $1 < j < j^*$.

\medskip
\noindent\textbf{Step 4: Target resonant interval $[x_{a_{j^*}}, x_{b_{j^*}+1}]$.}
Since $\lambda$ is a positive eigenvalue of $C_{j^*}$, Lemma~\ref{thm: eig of C}(iv) 
implies that $\bigl(\lambda,\, \be = (\beta_{2\hat{a}_{j^*}}, \beta_{2\hat{a}_{j^*}+2}, 
\ldots, \beta_{2\hat{b}_{j^*}})^\top\bigr)$ is an eigenpair of $D_{j^*}$ and the last row 
of (1) holds, giving
\[
    \alpha_{2\hat{b}_{j^*}+1} = \alpha_{2\hat{b}_{j^*}-1} \mp \beta_{2\hat{b}_{j^*}}\sqrt{\lambda}/k_0 
    \cdot t_{2\hat{b}_{j^*}} = 0.
\]
Hence there exists $\widetilde{\alpha}_{2\hat{b}_{j^*}+1}$ (possibly zero) such that
\begin{align}\label{equ: b_j_1+1 left}
    \begin{pmatrix} u(x_{b_{j^*}+1}) \\ u'_+(x_{b_{j^*}+1}) \end{pmatrix}
    = (-1)^{M_{j^*}}
    \begin{pmatrix}
        \widetilde{\alpha}_{2\hat{b}_{j^*}+1}\delta^{-A_{j^*}} + \mathcal{O}(\delta^{-A_{j^*}+1/2}) \\
        \beta_{2\hat{b}_{j^*}+1}\delta^{-A_{j^*}} + \mathcal{O}(\delta^{-A_{j^*}+1/2})
    \end{pmatrix}.
\end{align}
Since $\be$ is an eigenvector of a tridiagonal matrix with nonzero off-diagonal 
entries, all its entries are nonzero (as $\beta_{2\hat{a}_{j^*}} \neq 0$). In particular, 
$\beta_{2\hat{b}_{j^*}+1} = \beta_{2\hat{b}_{j^*}} \neq 0$.

\medskip
\noindent\textbf{Step 5: Leftward propagation from $x_{2N}$ to $x_{b_{j^*}+1}$.}
Since $\widetilde{\alpha}_{2\hat{b}_{j^*}+1}$ is undetermined from the rightward propagation, 
we propagate leftward from $x_{2N}$ and match at $x_{b_{j^*}+1}$. For $x > x_{2N}$, 
$u(x) = \e^{\mathrm{i}kx}$ up to a constant factor, so there exists an exponent $B$ 
such that
\begin{align}\label{equ: x_2N order}
    (u(x_{2N}), u'_+(x_{2N}))^\top = (\alpha_{2N}\delta^{B-1/2}, \beta_{2N}\delta^{B-1/2})^\top,
\end{align}
with $\alpha_{2N} = \e^{\mathrm{i}k_0 x_{2N}}$ and $\beta_{2N} = \mathrm{i}k_0 \e^{\mathrm{i}k_0 x_{2N}}$.
Proceeding leftward via~\eqref{equ: P(k,a) expand non} on non-resonant intervals 
$[x_{b_j+1}, x_{a_{j+1}}]$, $j^* \leq j \leq p$, and~\eqref{equ: P(k,a) expand res} 
on resonant intervals $[x_{a_j}, x_{b_j}]$, $j^* < j \leq p$ (using $P(k, -t_i)$ 
to account for the leftward direction), we record only the resulting orders. Define
\[
    B_{p+1} = B, \qquad
    B_j = B_{j+1} - \left(\hat{a}_{j+1} - \hat{b}_j - \tfrac{1}{2}\right),
    \quad j = p, p-1, \ldots, j^*.
\]
A straightforward computation gives
\[
    \begin{pmatrix} u(x_{2\hat{a}_j}) \\ u'_+(x_{2\hat{a}_j}) \end{pmatrix}
    = \begin{pmatrix} \mathcal{O}(\delta^{B_j-1/2}) \\ \mathcal{O}(\delta^{B_j-1}) \end{pmatrix},
    \quad j = p, p-1, \ldots, j^*+1,
\]
\[
    \begin{pmatrix} u(x_{2\hat{b}_j+1}) \\ u'_+(x_{2\hat{b}_j+1}) \end{pmatrix}
    = \begin{pmatrix} \mathcal{O}(\delta^{B_j}) \\ \mathcal{O}(\delta^{B_j}) \end{pmatrix},
    \quad j = p, p-1, \ldots, j^*.
\]
In particular,
\begin{align}\label{equ: b_j_1+1 right}
    \begin{pmatrix} u(x_{b_{j^*}+1}) \\ u'_+(x_{b_{j^*}+1}) \end{pmatrix}
    = \begin{pmatrix} \mathcal{O}(\delta^{B_{j^*}}) \\ \mathcal{O}(\delta^{B_{j^*}}) \end{pmatrix}.
\end{align}
Comparing~\eqref{equ: b_j_1+1 left} with~\eqref{equ: b_j_1+1 right} gives 
$-A_{j^*} = B_{j^*}$, and solving for $B$ yields
\[
    B = \sum_{j=j^*}^p \left(\hat{a}_{j+1} - \hat{b}_j - \tfrac{1}{2}\right)
      - \sum_{j=1}^{j^*-1} \left(\hat{a}_{j+1} - \hat{b}_j - \tfrac{1}{2}\right) - \hat{a}_1.
\]
The orders of $(u(x_i), u'_+(x_i))^\top$ for $b_{j^*}+1 \leq i \leq 2N$ then follow 
from the leftward propagation.

\medskip
\noindent\textbf{Step 6: Eigenmode approximation on each interval.}
Having determined the orders of $(u(x_i), u'_+(x_i))^\top$ for $1 \leq i \leq 2N$, 
we use~\eqref{equ: P(k,a)}: if $(u(x_i), u'_+(x_i))^\top = 
(\mathcal{O}(\delta^{\gamma_1}), \mathcal{O}(\delta^{\gamma_2}))^\top$, then for 
$x \in (x_i, x_{i+1})$,
\[
    \begin{pmatrix} u(x) \\ u'(x) \end{pmatrix}
    = \begin{pmatrix} 
        \mathcal{O}(\delta^{\min\{\gamma_1,\gamma_2\}}) \\ 
        \mathcal{O}(\delta^{\min\{\gamma_1,\gamma_2\}}) 
    \end{pmatrix}.
\]
Normalizing by choosing the constant factor so that $u(x) = \delta^{A_{j^*}} 
\e^{-\mathrm{i}kx}$ for $x < x_1$, we find that for $i \notin \{2\hat{a}_{j^*}, 
2\hat{a}_{j^*}+2, \ldots, 2\hat{b}_{j^*}\}$ there exists $\gamma_i \in \mathbb{Z}_+$ 
such that $u(x) = \mathcal{O}(\delta^{\gamma_i/2})$ on $(x_i, x_{i+1})$, while for 
$i \in \{2\hat{a}_{j^*}, 2\hat{a}_{j^*}+2, \ldots, 2\hat{b}_{j^*}\}$, $u(x) = \mathcal{O}(1)$. 
From~\eqref{equ: u(x_2i+2)}, for $i = \hat{a}_{j^*}, \ldots, \hat{b}_{j^*}$,
\[
    \begin{pmatrix} u(x_{2i}) \\ u'_+(x_{2i}) \end{pmatrix}
    = (-1)^{M_{j^*-1}+\sum_{l=a_{j^*}}^{2i-1} m_l}
    \begin{pmatrix} \mathcal{O}(\delta^{1/2}) \\ \beta_{2i} + \mathcal{O}(\delta^{1/2}) \end{pmatrix}.
\]
Applying~\eqref{equ: P(k,a)} once more, for $x \in (x_{2i}, x_{2i+1})$,
\begin{align*}
    u(x) &= (-1)^{M_{j^*-1}+\sum_{l=a_{j^*}}^{2i-1} m_l}
    \frac{\beta_{2i}}{k_0}\sin(k_0(x - x_{2i})) + \mathcal{O}(\delta^{1/2}) \\
    &= (-1)^{M_{j^*-1}} \frac{\beta_{2i}}{k_0}\sin(k_0(x - x_{a_{j^*}})) 
    + \mathcal{O}(\delta^{1/2}),
\end{align*}
where $\bigl(\lambda,\; (-1)^{M_{j^*-1}}\be/k_0\bigr)$ with 
$\be = (\beta_{2\hat{a}_{j^*}}, \beta_{2\hat{a}_{j^*}+2}, \ldots, \beta_{2\hat{b}_{j^*}})^\top$ 
is an eigenpair of $D_{j^*}$.
\end{proof}

\begin{remark}\label{rem: subwavelength vs beyond}
In the subwavelength regime (see~\eqref{equ: u(x) estimate}), 
$(\lambda, \al = (\alpha_i)_{i=1}^N)$ is an eigenpair of $C = \mathcal{C}(0)$ and $(\lambda, \be = ((\alpha_{i+1}-\alpha_i)/s_i)_{i=1}^{N-1})$ is an eigenpair of $D$, so that $u(x)$ is approximately constant inside the resonators and linear on the spacings. By contrast, beyond the subwavelength regime, $u(x)$ is approximated by trigonometric functions on specific spacings and vanishes elsewhere to leading order.

This contrast has two sources. First, in the subwavelength regime, $n(0) = 2N-1$ and $m = N-1$~\cite{feppon.cheng.ea2023Subwavelength}, because 
$k_0 = 0$ trivially satisfies $t_j k_0 \in \pi\mathbb{Z}$ for every $j$, so the entire chain forms a single resonant block. Beyond the subwavelength regime, $n(k_0) \leq 2N-1$ generically, and the index set $I$ splits into several disjoint 
blocks unless the geometry is specially tuned. This localization of resonance to specific blocks forces the eigenmode to be supported only on those blocks.

Second, the propagation matrix $P(k,a)$ behaves fundamentally differently in the two regimes. For subwavelength frequencies $k(\delta) = \pm\sqrt{\lambda\delta/r} 
+ \mathcal{O}(\delta)$, a direct expansion gives
\[
    P(k,a) = \begin{pmatrix} 1 & a \\ 0 & 1 \end{pmatrix}
    + \delta\lambda \begin{pmatrix} -\tfrac{1}{2}a^2 & -\tfrac{1}{6}a^3 \\ 
    -a & -\tfrac{1}{2}a^2 \end{pmatrix} + \mathcal{O}(\delta^{3/2}),
\]
which contrasts sharply with the expansions~\eqref{equ: P(k,a) expand non} 
and~\eqref{equ: P(k,a) expand res} valid for $k_0 \neq 0$. Together, these two factors account for the qualitatively different structure of the eigenmodes in the two regimes.
\end{remark}

\subsection{Multiple eigenvalue case}\label{sec: multiple case}

In this section, we briefly discuss the difficulties for extending Theorem~\ref{thm: eigenmode simple case} to the case where $\lambda$ is a degenerate eigenvalue of $\mc{C}(k_0)$. Let $k^\pm(\delta) =\omega^\pm(\delta)/v= k_0 \pm \sqrt{\lambda\delta} + d\delta + \mathcal{O}(\delta^{3/2})$ 
be an eigenfrequency with coefficients $\lambda, d$ given by 
Theorem~\ref{thm:mainresult1}. One can verify that $\lambda$ depends only on 
$\bm{t}|_{\mathcal{I}_j}$ for some block $\mathcal{I}_j = \llbracket a_j, b_j \rrbracket$, 
while $d$ depends only on $\bm{t}|_{\widetilde{\mathcal{I}}_j}$, where 
$\widetilde{\mathcal{I}}_j := \llbracket a_j - 1, b_j + 1 \rrbracket$. In general, neither $\lambda$ nor the pair $(\lambda, d)$ determines $j$ uniquely: 
multiple blocks may share the same eigenvalue $\lambda$, or even the same pair 
$(\lambda, d)$, as illustrated in the examples below. This non-uniqueness reflects 
a genuine degeneracy in the eigenmode structure, which we now describe in detail. 
\begin{enumerate}
    \setlength{\itemsep}{0pt}
    \item When $\lambda$ is an eigenvalue of $\mathfrak{m} \geq 2$ blocks $C_{j_1}, \ldots, C_{j_\mathfrak{m}}$, the eigenmode $u(x)$ is a superposition of contributions 
    from all blocks sharing $\lambda$, and the mixing coefficients cannot be determined from the leading-order analysis alone.
    \item The $2\mathfrak{m}$ branches 
    $k^{i,\pm}(\delta) = k_0 \pm \sqrt{\lambda\delta} + \mathcal{O}(\delta)$ are  indistinguishable at order $\mathcal{O}(\delta^{1/2})$. Whether and at which 
    order the degeneracy is lifted depends on $\bm{t}$: if the second-order 
    coefficients $d_i$ are distinct, the branches separate at $\mathcal{O}(\delta)$; 
    if some $d_i$ coincide, one must expand to $\mathcal{O}(\delta^{3/2})$ or 
    beyond. In the worst case, all coefficients agree to every finite order, and the degeneracy is never lifted by a formal asymptotic expansion.
    \item In the simple case, Theorem~\ref{thm: eigenmode simple case} identifies the eigenmode uniquely via 
    a single matching condition in the propagation matrix argument (Step~4 of the proof). In the multiple case, one must instead diagonalise an effective coupling matrix within the degenerate subspace, a procedure whose 
    outcome depends sensitively on the higher-order structure of $\bm{t}$ and does not reduce to a single closed-form expression.
\end{enumerate}

To illustrate, consider the two parameter choices
\begin{align}\label{equ: bm t setting1}
    \bm{t} = (0.3,\ 0.3,\ 1,\ 2,\ 1,\ 1.3,\ 1.3,\ 1,\ 1)^\top, \qquad k_0 = \pi,
\end{align}
or
\begin{align}\label{equ: bm t setting2}
    \bm{t} = (0.3,\ 0.3,\ 1,\ 2,\ 1,\ 1.3,\ 1,\ 1,\ 1.3)^\top, \qquad k_0 = \pi
\end{align}
Then,
\[
    \mathcal{I}_1 = \llbracket 3, 5 \rrbracket, \qquad \mathcal{I}_2 = \llbracket 8, 9 \rrbracket 
    \quad \text{or} \quad \mathcal{I}_2 = \llbracket 7, 8 \rrbracket,
\]
and both blocks yield $\lambda_1 = \lambda_2 = 1$, while the coefficients $d$ 
determined by $\widetilde{\mathcal{I}}_1$ and $\widetilde{\mathcal{I}}_2$ differ. 
Hence $\lambda$ alone does not identify $j$.

For an example where $(\lambda, d)$ also fails to identify $j$ uniquely, consider
\begin{align}\label{equ: bm t setting3}
    \setcounter{MaxMatrixCols}{13}
    \bm{t} = (0.3,\ 1.3,\ 2,\ 3,\ 1.7,\ 2,\ 2,\ 2,\ 1.7,\ 3,\ 2,\ 2.3,\ 0.3)^\top, 
    \qquad k_0 = \pi,
\end{align}
for which $\mathcal{I}_1 = \llbracket 3,4 \rrbracket$, $\mathcal{I}_2 = \llbracket 6,8 \rrbracket$, 
$\mathcal{I}_3 = \llbracket 10,11 \rrbracket$. The six eigenfrequency branches are
\begin{align*}
    k^{1,\pm}(\delta) &= \pi \pm \sqrt{\tfrac{1}{6}\,\delta} 
    + \tfrac{1}{12}\cot(0.3\pi)\,\delta + \mathcal{O}(\delta^{3/2}), \\
    k^{2,\pm}(\delta) &= \pi \pm \sqrt{\tfrac{1}{2}\delta} 
    - \tfrac{1}{4}\cot(0.3\pi)\,\delta + \mathcal{O}(\delta^{3/2}), \\
    k^{3,\pm}(\delta) &= \pi \pm \sqrt{\tfrac{1}{6}\,\delta} 
    + \tfrac{1}{12}\cot(0.3\pi)\,\delta + \mathcal{O}(\delta^{3/2}),
\end{align*}
where $k^{i,\pm}$ is determined by $\widetilde{\mathcal{I}}_i$, $i = 1, 2, 3$. 
Here $(\lambda, d)$ for $\widetilde{\mathcal{I}}_1$ and $\widetilde{\mathcal{I}}_3$ 
coincide, so $(\lambda, d)$ does not identify $j$ uniquely either.

We now describe the structure of the eigenmodes. Given $(\lambda, d)$, let $j$ be 
any index such that $(\lambda, d)$ comes from $\widetilde{\mathcal{I}}_j$. To leading 
order $\mathcal{O}(\delta^{1/2})$, the eigenmode $u(x)$ corresponding to 
$k^\pm(\delta)$ is a linear combination of trigonometric functions on the spacings within blocks $\mathcal{I}_j$ sharing the same $\lambda$, with amplitudes determined by the eigenvectors of the corresponding $D_j$ 
in~\eqref{equ: D_j def}, and vanishes elsewhere. Moreover, for two blocks $\mathcal{I}_{j_1}$ and $\mathcal{I}_{j_2}$ sharing the same $\lambda$ (while $d$ can be different): when the gap between them is large, the eigenmode exhibits trigonometric behavior on the spacings of only one block; when the gap is small, it may exhibit trigonometric behavior on the spacings of both blocks. In particular, when the gap is exactly 
$1$, the eigenmode necessarily exhibits trigonometric behavior on the spacings of both $\mathcal{I}_{j_1}$ and $\mathcal{I}_{j_2}$. A numerical illustration of the eigenmodes in different settings can be found in Figure \ref{fig: eigenmode}. 
\begin{figure}[!htb] 
\centering
\begin{subfigure}[t]{0.48\textwidth}
    \centering
    \includegraphics[width=\textwidth]{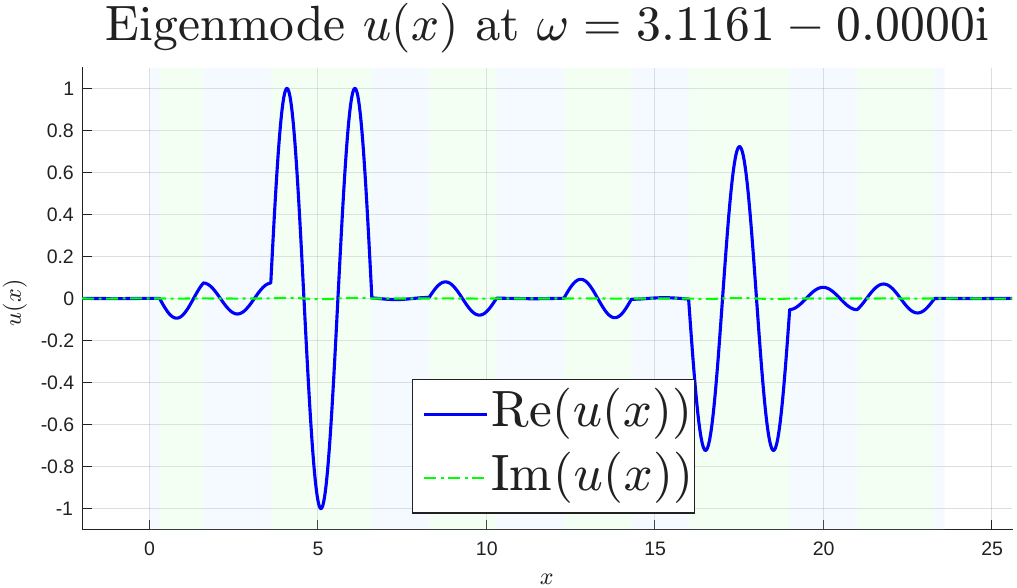}
  \caption{Eigenmode $u(x)$ corresponding to $k^{1,-}(\delta)$ in 
setting~\eqref{equ: bm t setting3}, approximated by trigonometric functions on 
the spacings of $\mathcal{I}_1$ and $\mathcal{I}_3$.}
\end{subfigure}
\hfill
\begin{subfigure}[t]{0.48\textwidth}
    \centering
    \includegraphics[width=\textwidth]{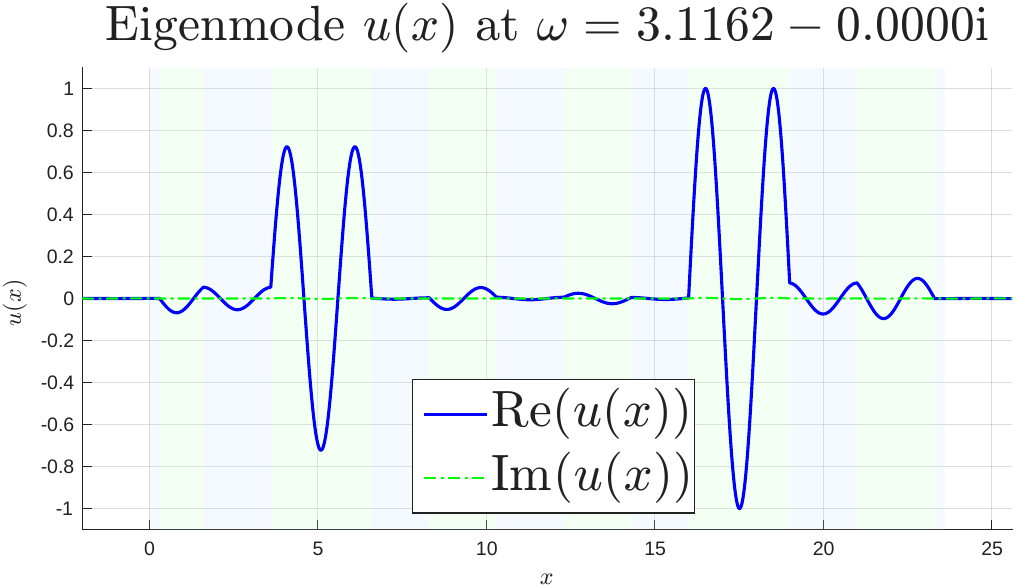}
   \caption{Eigenmode $u(x)$ corresponding to $k^{3,-}(\delta)$ in 
setting~\eqref{equ: bm t setting3}, approximated by trigonometric functions on 
the spacings of $\mathcal{I}_1$ and $\mathcal{I}_3$.}
\end{subfigure}
\vfill
\begin{subfigure}[t]{0.48\textwidth}
    \centering    \includegraphics[width=\textwidth]{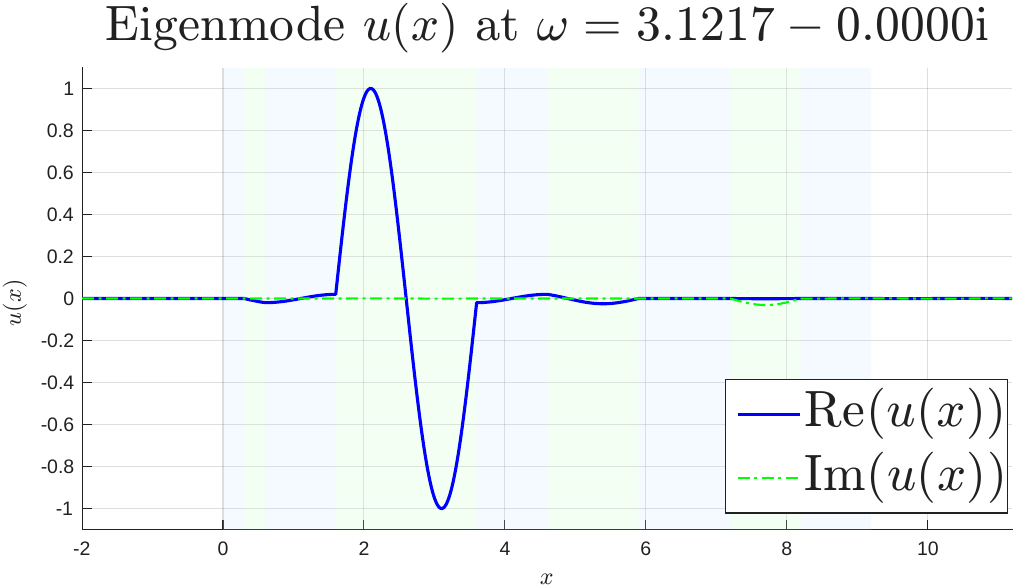}
    \caption{An eigenmode in setting (\ref{equ: bm t setting1}), approximated by trigonometric functions only on the spacings in $\mathcal{I}_1$ since the gap between $\mathcal{I}_1$ and $\mathcal{I}_2$ is large.}
\end{subfigure}
\hfill
\begin{subfigure}[t]{0.48\textwidth}
    \centering
\includegraphics[width=\textwidth]{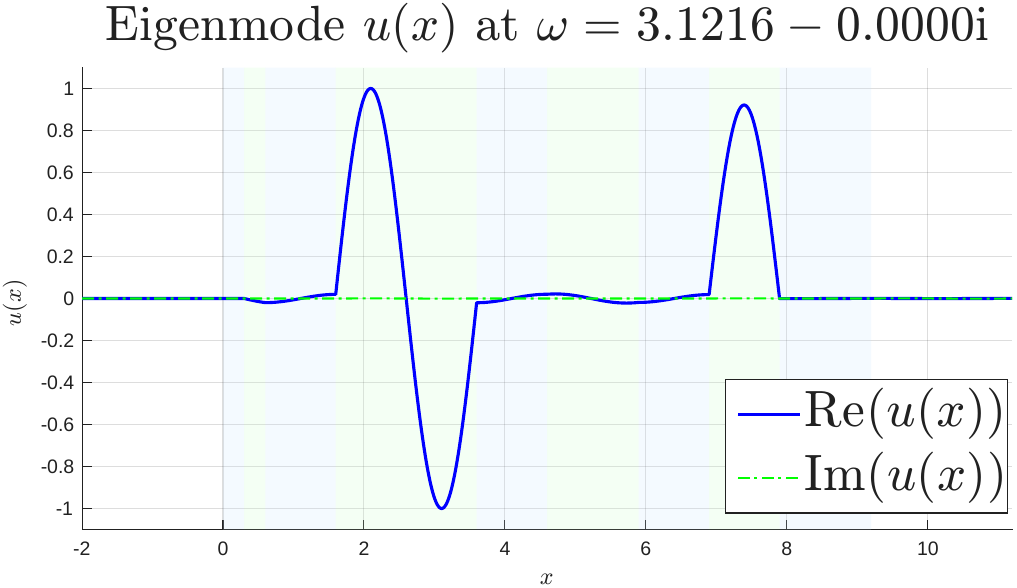} 
    \caption{An eigenmode in setting (\ref{equ: bm t setting2}), approximated by trigonometric functions both on the spacings in $\mathcal{I}_1$ and $\mathcal{I}_2$ since the gap between $\mathcal{I}_1$ and $\mathcal{I}_2$ is small (of length $1$).}
\end{subfigure}
\caption{Eigenmodes $u(x)$ in different settings with $\delta=0.004,r=1,v=1$. Blue and green shaded regions denote the structural lengths and spacings, respectively.}
    \label{fig: eigenmode}
\end{figure}



\section*{Acknowledgments}
This work was partially supported by the National Key R\&D Program of China grant number 2024YFA1016000 and the Fundamental Research Funds for the Central Universities grant number 226-2025-00192.

\section*{Data Availability Statement}
Data and codes supporting the findings of this work are available upon request.

\section*{Conflict of interest} 
The authors have no conflicts of interest to declare. 

\appendix 
\section{Proof of Theorem \ref{thm: G_{I,l} expand}} \label{appex:proofexpG}
\begin{proof}[Proof of Theorem \ref{thm: G_{I,l} expand}]
The proof of this expansion relies on (\ref{equ: G(k;delta) expand}) and the following identities:
\begin{equation}\label{equ:matrixexpan2}
\begin{aligned}
        1. \ &RMR=\eta(M)R;\\
        2. \ &\eta(L_j)=\begin{cases}
            -2\i\l[\sin(t_jk_0)+t_j\cos(t_jk_0)\cdot z\r]+ \mc{O}(z^2),&j\notin I,\\
            (-1)^{m_j}(-2\i t_jz)+ \mc{O}(z^3),&j\in I.
        \end{cases},\q \tau_0(\eta(L_j))=\begin{cases}
            0,&j\notin I,\\1,&j\in I;
        \end{cases}\\
        3. \
        &\eta(L_{j+1}SL_{j})=(-1)^{m_{j+1}+m_j}2+\mc{O}(z^2),\tau_0(\eta(L_{j+1}SL_j))=0,\q\{j,j+1\}\subset I;\\
        4. \
        &\tau_0(\eta(L_{j+2}SL_{j+1}SL_j))\geq 1,\q \{j,j+1,j+2\}\subset I;\\
        5. \
        &RL_1(k)S=(-1)^{m_1}R_++\mc{O}(z),\q\text{if } 1\in I,\\ &SL_{2N-1}(k)R=(-1)^{m_{2N-1}}R_-+\mc{O}(z),\q\text{if } 2N-1\in I,
    \end{aligned}
\end{equation}
where $\eta(M) = M_{21}+M_{22}-M_{11}-M_{12}$ for $M\in \mathbb{C}^{2\times 2}$, and $\tau_0(f)$ denotes the order of the first-order term in the asymptotic expansion of $f(z)$ as $z\to 0$, in particular, $\tau_0(0)=+\infty$.

For $1\leq l\leq m$, by (\ref{equ: G(k;delta) expand}), the expression of $G_l(k)$ consists of $\tbinom{2N}{l}$ terms, each corresponding to a selection of $l$ $S$-matrices (or equivalently, $2N-l$ $R$-matrices) from the $2N$ available matrices $R + \nu S$.  We denote by $Q$ the matrix product corresponding to a specific selection and analyze its order in $z$ as $z\to 0$. 

To this end, we first consider the $2N-1-l$ gaps formed by the $2N-l$ $R$-matrices and denote the product of all matrices within each gap by $M_1^{c_1}, M_2^{c_2},\ldots, M_{2N-l-1}^{c_{2N-l-1}}$, where $c_i$ denotes the number of $S$-matrices in the $i$\textsuperscript{th} gap; see \Cref{fig: M_s_basic} for an illustration. Moreover, we denote by $M_L^{c_L}$ the product of all matrices to the left of the first $R$-matrix, and by $M_R^{c_R}$ the product of all matrices to the right of the last $R$-matrix, where $c_L$ and $c_R$ denote the corresponding numbers of $S$-matrices involved. In particular, $M_L^0$ and $M_R^0$ are identity matrices.

\begin{figure}[!htb]
    \centering
    \begin{tikzpicture}[
        block/.style={rectangle, draw, inner sep=3pt, font=\footnotesize},
        label/.style={font=\footnotesize, below=2pt}
    ]
        \node[block] (ML1) at (0,0) {$SL_{2N-1}$};
        \node[label] at (ML1.south) {$M_L^1$};
        \node[font=\footnotesize, right=2pt of ML1] (R1) {$R$};
        \node[block, right=2pt of R1] (M10) {$L_{2N-2}$};
        \node[label] at (M10.south) {$M_1^0$};
        \node[font=\footnotesize, right=2pt of M10] (R2) {$R$};
        \node[block, right=2pt of R2] (M22) {$L_{2N-3}SL_{2N-4}SL_{2N-5}$}; 
        \node[label] at (M22.south) {$M_2^2$};
        \node[font=\footnotesize, right=2pt of M22] (dots) {$\cdots$};
        \node[font=\footnotesize, right=2pt of dots] (Rmid) {$R$};
        \node[block, right=2pt of Rmid] (Mend) {$L_4SL_3$};
        \node[label] at (Mend.south) {$M_{2N-1-l}^1$};
        \node[font=\footnotesize, right=2pt of Mend] (Rlast) {$R$};
        \node[block, right=2pt of Rlast] (MR2) {$L_2SL_1S$}; 
        \node[label] at (MR2.south) {$M_R^2$};
    \end{tikzpicture}
    \caption{Structure of $Q$ with $c_L=1,c_1=0,c_2=2,\cdots, c_{2N-l-1}=1,c_R=2$.}
    \label{fig: M_s_basic}
\end{figure}

For a certain gap $M_i^{c_i}$, let $\text{Sub}(M_i^{c_i})$ denote the set of all subscripts $j$ that the term $L_j$ appears in $M_i^{c_i}$. $\text{Sub}(M_L^{c_L})$ and $\text{Sub}(M_L^{c_L})$ can be defined similarly. As an illustration, for the configuration in \Cref{fig: M_s_basic}, we  have
\[
\text{Sub}(M_1^0)=\{2N-2\},\text{Sub}(M_2^2)=\{2N-3,2N-4,2N-5\},\text{Sub}(M_{2N-1-l}^1)=\{4,3\},
\]
\[
\text{Sub}(M_L^1)=\{2N-1\},\text{Sub}(M_R^2)=\{2,1\}.
\]
Then, by \eqref{equ:matrixexpan2}(1), the total matrix product $Q$ can be written as 
\begin{equation}\label{equ:msexpress1}
Q=\left[\prod_{i=1}^{2N-1-l}\eta(M_i^{c_i})\right] M_L^{c_L}RM_R^{c_R}.    
\end{equation}
Now, we define the index sets
\[
A=\{i:\text{Sub}(M_i^{c_i})\subset I\},\quad B=\{i:c_i=0\}.
\]
Applying \eqref{equ:matrixexpan2}(2) to (\ref{equ:msexpress1}) and using the definition of $\tau_0$, we observe that $\tau_0(M_i^{c_i})=1,i\in A\cap B.$ Note that since $\tau_0(M_L^{c_L}RM_{R}^{c_R})$ should always be $0$, we have
\begin{align}\label{equ:tau_0(Q)}
\tau_0(Q)=\sum_{i=1}^{2N-1-l}\tau_0(\eta(M_i^{c_i}))\geq\sum_{i=1}^{2N-1-l}\tau_0(\eta(M_i^{c_i}))\mathbf{1}_{A\cap B}=\# A\cap B.
\end{align}
Furthermore, we introduce the quantities
\[
\ti{c}_L:=\#\l(\text{Sub}(M_L^{c_L})\cap I\r)\leq\#\text{Sub}(M_L^{c_L})=c_L,\quad\ti{c}_R:=\#\l(\text{Sub}(M_R^{c_R})\cap I\r)\leq\#\text{Sub}(M_R^{c_R})=c_R.
\]
These count the matrices $L_j$ within the left and right end segments whose subscripts belong to $I$. With these definitions, we obtain the lower bound
\begin{equation}\label{equ: A cap B LB}
\begin{aligned}
\# A\cap B&\geq \# B-[(2N-1-c_L-c_R)-(n-\ti{c}_L-\ti{c}_R)]\\
&=\#B-(2N-1-n)+(c_L-\ti{c}_L)+(c_R-\ti{c}_R).    
\end{aligned}
\end{equation}
Since we select exactly $l$ $S$-matrices, we have $c_L + c_R + \sum_{i=1}^{2N-1-l}c_i=l$. This forces at least $2N-1-2l + c_L + c_R$ of $c_i$'s to be zero. Therefore,
\begin{align}\label{equ: B LB}
\# B\geq 2N-1-2l+c_L+c_R.
\end{align}
Combining this with (\ref{equ: A cap B LB}) yields
\[
\#A\cap B\geq n-2l+(2c_L-\ti{c}_L)+(2c_R-\ti{c}_R)\geq n-2l.
\]
From (\ref{equ:tau_0(Q)}) we conclude that $\tau_0(Q)\geq n-2l$. The equality holds if and only if $c_L=c_R=0$, $\#B=2N-1-2l$, exactly $2N-1-n$ gaps with $c_i=0$ satisfy $\text{Sub}(M_i^{c_i})\cap I=\emptyset$, the left $n-2l$ gaps with $c_i=0$ and the $l$ gaps with $c_i=1$ satisfy $\text{Sub}(M_i^{c_i})\subset I$. In this case (\emph{i.e.}, $\tau_0(Q) = n-2l$), the $l$ gaps $M_{i}^{c_i}$ with $c_i=1$ take the form $L_{j_i+1}SL_{j_i}$ with $ j_1\prec j_2\prec \cdots\prec j_l,\{j_i,j_i+1\}\subset I$. Substituting the asymptotic expansions from (2,3) of (\ref{equ:matrixexpan2}), the leading term of such a product $Q$ becomes
\begin{align*}
    Q=&\left[\prod_{i=1}^{2N-l-1}\eta(M_i^{c_i})\right] R=\prod_{j\neq j_i ,j_{i+1}}\eta(L_j)\prod_{i=1}^{l}\eta(L_{j_i +1}SL_{j_i})\cdot R\\
    =&\prod_{j\notin I}\eta(L_j)\prod_{I\ni j\ne j_i,j_i+1}\eta(L_j)\prod_{i=1}^l\eta(L_{j_i+1}SL_{j_i})\cdot R
    \\
    =&C_1\cdot2^l(-2\i)^{2N-1-2l}z^{n-2l}(1+C_2 z)\l(\prod_{i=1}^l\theta_{j_i}\r)\cdot R+\mc{O}(z^{n-2l+2})\notag.
\end{align*}
Thus, the expansion of order $\mc{O}(z^{n-2l})$ of $G_l$ can be given by
\[
G_l(k_0+z)=C_1\cdot 2^l(-2\i )^{2N-1-2l}z^{n-2l}(1+C_2z)\sum_{\substack{j_1\prec j_2\prec \cdots\prec j_l\\\{j_i,j_i+1\}\subset I}}\bigl(\prod_{i=1}^{l}\theta_{j_i}\bigr)\cdot R+\mc{O}(z^{n-2l+2}).
\]

Next, we examine the selections that yield $\tau_0(Q) = n-2l+1$. If $\# B\geq 2N+1-2l$, (\ref{equ: A cap B LB})(\ref{equ:tau_0(Q)}) gives $\#A\cap B\geq n-2l+2$, which implies $\tau_0(Q)\geq n-2l+2$. Thus, $\tau_0(Q)= n-2l+1$ cannot be attained in this case. By inequality (\ref{equ: B LB}), we have $\#B \geq 2N-1-2l$, so the only possibilities to achieve $\tau_0(Q)=n-2l+1$ are when $\#B = 2N-2l$ or $\#B = 2N-1-2l$.

\textbf{Case 1: $\# B=2N-2l$}. Exactly $2N-2l$ of the $c_i$'s equal to zero. Since $c_L+c_R+\sum_{i=1}^{2N-l-1}c_i\mathbf{1}_{c_i\neq 0}=l$, we distinguish two subcases:

\noindent $\bullet$ $c_L=c_R=0$, exactly one $c_i$ equals two, and all remaining nonzero $c_i$'s equal to one. Then (\ref{equ: A cap B LB}) yields $\# A\cap B\geq n-2l+1$, with equality holds only if the $2N-1-n$ gaps with $c_i=0$ satisfy $\text{Sub}(M_i^{c_i})\cap I=\emptyset$. This forces the unique gap with $c_i=2$ to satisfy $\text{Sub}(M_i^{c_i})\subset I$. By (\ref{equ:matrixexpan2})(4), we then have $\tau_0(\eta(M_i^{c_i}))\geq1$. Consequently,
\begin{align*}
    \tau_0(Q)=\sum_{i=1}^{2N-1-l}\tau_0(\eta(M_i^{c_i}))&\geq\sum_{i=1}^{2N-1-l}\tau_0(\eta(M_i^{c_i}))(\mathbf{1}_{A\cap B}+\mathbf{1}_{A\cap\{{i:c_i=2\}}})\\
    &\geq \# A\cap B+1\geq n-2l+2. 
\end{align*}
Hence, $\tau_0(Q)$ cannot be $n-2l+1$ in this subcase.

\noindent $\bullet$ $c_L=0, \ c_R=1$ or $c_L=1, \ c_R=0$, and all nonzero $c_i$ equal to one. We first expand $Q$ for the case $c_L=0, \ c_R=1$. In this case, (\ref{equ: A cap B LB}) yields $\# A\cap B\geq n-2l+1+(1-\ti{c}_R),$ and thus $\tau_0(Q)=n-2l+1$ occurs only when $\ti{c}_R=1$, \emph{i.e.}, $1\in I$. Assume now that $1\in I$, then  $\tau_0(Q)=n-2l+1$ if and only if exactly $2N-1-n$ gaps with $c_i=0$ satisfy $\text{Sub}(M_i^{c_i})\cap I=\emptyset$, the left $n-2l+1$ gaps with $c_i=0$ and the $l-1$ gaps with $c_i=1$ satisfy $\text{Sub}(M_i^{c_i})\subset I\setminus\{1\}$. The $l-1$ matrices $M_{i}^{c_i}$ with $c_i=1$ have the form $L_{j_i+1}SL_{j_i}$, where $j_1\prec j_2\prec \cdots\prec j_{l-1},\{j_i,j_i+1\}\subset I\setminus\{1\}$. Then, applying \eqref{equ:matrixexpan2}(2)(3)(5), we obtain 
\begin{align*}
    Q=C_1\cdot 2^{l-1}(-2\i )^{2N-2l}z^{n-2l+1}\frac{1}{t_1}\prod_{i=1}^{l-1}\theta_{j_i}\cdot R_{+}+\mc{O}(z^{n-2l+2}).
\end{align*}
Similarly, we can expand $Q$ for the case where $c_L=1, \ c_R=0$. In this circumstance, $\tau_0(Q)=n-2l+1$ occurs only when $2N-1\in I$. The $l-1$ gaps $M_{i}^{c_i}$ with $c_i=1$ have the form $L_{j_i+1}SL_{j_i}$, where $j_1\prec j_2\prec \cdots\prec j_{l-1},\{j_i,j_i+1\}\subset I\setminus\{2N-1\}$. A certain term $Q$ has the form
\begin{align*}
    Q=C_1\cdot 2^{l-1}(-2\i )^{2N-2l}z^{n-2l+1}\frac{1}{t_{2N-1}}\prod_{i=1}^{l-1}\theta_{j_i}\cdot R_{-}+\mc{O}(z^{n-2l+2}).
\end{align*}

\textbf{Case 2: $\# B=2N-1-2l$}. Here, the constraint $c_L+c_R+\sum_{i=1}^{2N-l-1}c_i\mathbf{1}_{c_i\neq 0}=l$ forces $c_L=c_R=0$. Consequently, there are $2N-1-2l$ gaps with $c_i=0$ and $l$ gaps with $c_i=1$. In this situation $\tau_0(Q)=\# A\cap B$. This implies that $\tau_0(Q)=n-2l+1$ occurs if and only if $n-2l+1$ gaps with $c_i=0$ satisfy $\text{Sub}(M_i^{c_i})\subset I$ while the left $2N-2-n$ gaps with $c_i=0$ satisfy $\text{Sub}(M_i^{c_i})\cap I=\emptyset$. Geometrically, these conditions mean that every position not belonging to $I$ has been occupied, with exactly one position left free. Because each $c_i=1$ occupies two positions, the one free position must be adjacent to $I$. Recall that $I$ is partitioned into $p$ disjoint integer intervals, the one free position can only be of the form $a_j-1\text{ or }b_j+1,1\leq j\leq p$. But if $a_1=1\text{ or }b_{p}=2N-1$ (equivalently $1\in I$ or $2N-1-\in I$), position $a_1-1\text{ or }b_p+1$ does not exist and is disregarded. Suppose that a free position has the form of $a_j-1$, a certain $Q$ should be
\begin{align*}
Q=&\prod_{j\notin I\cup\{a_j-1\}}\eta(L_j)\prod_{I\setminus\{a_j\}\ni j\ne j_i,j_i+1}\eta(L_j)\cdot\eta(L_{a_j}SL_{a_j-1})\cdot\prod_{i=1}^{l-1}\eta(L_{j_i+1}SL_j)\cdot R\\
=&C_1\cdot 2^l(-2\i)^{2N-1-2l}z^{n-2l+1}\frac{\cot(t_{a_j-1}k_0)}{t_{a_j}}\prod_{i=1}^{l-1}\theta_{j_i}\cdot R+\mc{O}(z^{n-2l+2}),
\end{align*}
where the indices $j_1,j_2,\cdots,j_{l-1}$ satisfy $j_1\prec j_2\prec\cdots\prec j_{l-1},\{j_i,j_i+1\}\subset I\setminus\{a_j\}$. If the free position is $b_j+1$, then a similar formula can be derived.

We now combine the two cases $\# B=2N-2l$ and $\#B=2N-1-2l$. When $1\in I$ or $2N-1\in I$, the terms discussed in case $\# B=2N-2l$ should be taken into account; otherwise, the terms in cases $\#B=2N-1-2l$ should be considered. This finally gives the expansion in (\ref{expGl}).

Regarding the expansion for $g_0(k_0+z)$, Theorem \ref{thm: resonant frequency f(z;mu) zeros} yields
\[
g_0(k_0+z)=g(k_0+z;0)=f(k_0+z;0)=-(2\i)^{2N-1}\prod_{j=1}^{2N-1}\sin [t_j(k_0+z)].
\]
A direct computation gives (\ref{equ: g_0 expand}).
\end{proof}

\bibliographystyle{plain}
\bibliography{highcon}

\end{document}